\documentclass[12pt, twoside, reqno]{article}



\usepackage{amsmath,amsthm}
\usepackage{amssymb}
\usepackage{bm}

\usepackage{enumitem}

\usepackage{graphicx}

\usepackage[T1]{fontenc}


\pagestyle{myheadings}
\markboth{A. Augustynowicz and R. Perczy\'{n}ski}{}



\newcommand{\D}{\mathrm{d}}
\newcommand{\ee}{\mathrm{e}}
\newcommand{\ii}{\mathrm{i}}
\allowdisplaybreaks
\RequirePackage{fix-cm}
\newtheorem{assumption}{Assumption}
\newtheorem{notation}{Notations}
\newtheorem{lemma}{Lemma}
\newtheorem{defi}{Definition}
\newtheorem{exam}{Example}
\newtheorem{theorem}{Theorem}
\newtheorem{remark}{Remark}





\numberwithin{equation}{section}


\frenchspacing

\textwidth=17cm
\textheight=23cm
\parindent=16pt
\oddsidemargin=-0.5cm
\evensidemargin=-0.5cm
\topmargin=-0.5cm





\begin{document}


\baselineskip=17pt


\title{Asymptotic expansions for the solution of a linear PDE 
	with a multifrequency  highly oscillatory potential}

\author{Antoni Augustynowicz\\
Institute of Mathematics\\ 
Faculty of Mathematics, Physics and Informatics,\\
University of Gda\'{n}sk\\
Wita Stwosza 57\\
80-308 Gda\'{n}sk, Poland\\
E-mail: antoni.augustynowicz@ug.edu.pl
\and 
Rafa{\l} Perczy\'{n}ski\\
Institute of Mathematics\\ 
Faculty of Mathematics, Physics and Informatics,\\
University of Gda\'{n}sk\\
Wita Stwosza 57\\
80-308 Gda\'{n}sk, Poland\\
E-mail: rafal.perczynski@phdstud.ug.edu.pl}

\date{}

\maketitle


\renewcommand{\thefootnote}{}

\footnote{2020 \emph{Mathematics Subject Classification}: Primary 	65M38 ; Secondary  65M70.}

\footnote{\emph{Key words and phrases}: Modulated Fourier expansion, highly oscillatory PDEs, asymptotic expansion,  Neumann series .}

\renewcommand{\thefootnote}{\arabic{footnote}}
\setcounter{footnote}{0}


\begin{abstract}
Highly oscillatory differential equations present significant challenges in numerical treatments. The
Modulated Fourier Expansion (MFE), used as an ansatz, is a commonly employed tool as a numerical
approximation method. 
In this article, the Modulated Fourier Expansion is analytically derived for a linear partial differential equation with a multifrequency highly oscillatory potential.
The solution of the equation is expressed as a convergent Neumann series within the appropriate Sobolev space.
The proposed approach enables, firstly, to derive a general formula for the error associated with the
approximation of the solution by MFE, and secondly, to determine the coefficients for this expansion
-- without the need to numerically solve  the system of differential equations to find the coefficients of
MFE. Numerical experiments illustrate the theoretical investigations.
\end{abstract}

\section{Introduction}
\label{sec:sample1}
We consider the following highly oscillatory partial differential equation
\begin{align}
\label{eq:1.5}
&\partial_tu(x,t)=\mathcal{L}u(x,t)+f(x,t)u(x,t),\qquad t\in [0,t^\star],\quad x\in\Omega\subset\mathbb{R}^m,\\ \nonumber
&u(x,0)=u_0(x),
\end{align}
with zero boundary conditions, where $\Omega$ is an open and bounded subset of $\mathbb{R}^m$  with smooth boundary $\partial \Omega$, $t^\star>0$ and $\mathcal{L}$ is a linear differential operator of degree $2p$, $p\in \mathbb{N}$, defined by the formula
\begin{align} \label{diff_operator}
\mathcal{L} = \sum_{|\bm{p}|\leq 2p}^{}a_{\bm{p}}(x) D^{\bm{p}},  \qquad D^{\bm{p}}= \frac{\partial^{p_1}}{\partial x_1^{p_1}}\frac{\partial^{p_2}}{\partial x_2^{p_2}}\dots \frac{\partial^{p_m}}{\partial x_m^{p_m}}, \quad  x\in \Omega\subset \mathbb{R}^m.
\end{align}
Multi-index $\bm{p}$ is an $m$-tuple of nonnegative integers $\bm{p} = (p_1,p_2,\dots,p_m)$ and $a_{\bm{p}}(x)$ are smooth, complex-valued functions of $x \in \bar{\Omega}$.
We assume that function $f(x,t)$ in  equation (\ref{eq:1.5}) is a highly oscillatory of type
\begin{equation}\label{ideal_function_f}
f(x,t) = \sum_{n=1}^{N}\alpha_n(x)e^{i n\omega  t}, \quad \omega \gg 1, \quad N\in \mathbb{N},
\end{equation}
where $\alpha_n$ are complex-valued, sufficiently smooth functions.
The proposed method can also be efficiently applied  in the case of  functions $\alpha_n$ depending on the time variable $t$. This will be demonstrated in numerical experiments.
This paper aims to express the solution of  (\ref{eq:1.5}) as the following partial sum of the asymptotic expansion 
\begin{equation}\label{partial_expansion}
u(x,t) = p_{0,0}(x,t) +  \sum_{r=1}^{R}\frac{1}{\omega^r}\sum_{s=0}^{S}p_{r,s}(x,t)e^{i s \omega t}+\frac{1}{\omega^{R+1}}E_{R,S}(x,t), 
\quad t\in [0,t^\star],\quad x\in\Omega\subset\mathbb{R}^m,
\end{equation}
where coefficients $p_{r,s}$ and the magnitude of error $E_{R,S}$ are independent of parameter $\omega$.
Needless to say, the general form of equation (\ref{eq:1.5}) encompasses many important equations from both classical and quantum physics. The most important examples include the heat equation and the Schr\"{o}dinger equation. We demonstrate that the proposed method can also be applied  to equations with second-order time derivatives, such as the wave equation and the Klein-Gordon equation.  Highly oscillatory differential equations of type (\ref{eq:1.5}) arise in various fields, including electronic engineering \cite{Condon_2009,condon_2012}, when computing scattering frequencies \cite{Condon_2019}, and in quantum mechanics \cite{IKS_2018,CKLP_2021}.


The asymptotic expansion of type (\ref{partial_expansion}), also known as a Modulated Fourier expansion or frequency expansion, is an important technique in computational mathematics used, \emph{inter alia}, for analysing  highly oscillatory  Hamiltonian systems over long times \cite{Lubich_2001,Lubich_2003} and in the study of the heterogeneous multiscale method for oscillatory ordinary differential equations \cite{Sanz-Serna_2009}.   A comprehensive and detailed description of the Modulated Fourier expansion can be found in \cite{Hairer}. Furthermore, the Modulated Fourier expansion can be utilized in numerical-asymptotic approaches as an ansatz for solving linear or nonlinear highly oscillatory differential equations 
\cite{cohen_2004,CKLP_2021,Condon_2010,condon_2012}
In short, this ansatz is incorporated into the equation, and subsequently, the coefficients $p_{r,s}$ in the sum (\ref{partial_expansion}) are determined either recursively or numerically by solving non-oscillatory differential equations. This approach allows us to approximate highly oscillatory equations with great accuracy.
It is known, that the sum (\ref{partial_expansion}) approximates the solution with error $C\omega^{-R-1}$, but the constant $C$ is unknown. We derive the formula for this constant. This may be important in practise since this constant can be large.

In this paper, instead of employing an ansatz, we approximate the solution to equation (\ref{eq:1.5}) by deriving a partial sum of the asymptotic expansion (\ref{partial_expansion}) purely analytically. This approach enables us to obtain formulas for the coefficients $p_{r,n}$ of  sum (\ref{partial_expansion}), eliminating the need to determine them by solving a system of differential equations. Furthermore, this approach allows us to derive a formula for the error in the approximation by the asymptotic sum (\ref{partial_expansion}).

To express the solution of (\ref{eq:1.5}) as an asymptotic series, we intend to use the computational methods proposed for computing highly oscillatory integrals. To find the approximate solution of the partial differential equation, firstly, we show that the Neumann series -- in other words, a series of multivariate integrals -- converges to the solution of equation (\ref{eq:1.5}) in the Sobolev space $H^{2p}(\Omega)$, where $2p$ is the order of the differential operator $\mathcal{L}$ defined in (\ref{diff_operator}).
Then, by using integration by parts and the theory of semigroups, we expand asymptotically each of these integrals  into a sum of known coefficients.
This is the most complicated and technical part, since the domain of each integral is a $d$-dimensional simplex, $d=1,2,\dots$  and, as a result, the number of terms in the asymptotic expansion grows exponentially as $d$ increases. 
Fortunately, our numerical experiments show that one can achieve a small enough error even for relatively small values of $d$. By organizing terms appropriately with respect to magnitudes $\omega^{-r}$
and frequencies $\ee^{\ii s\omega t}$, 
we obtain the sum (\ref{partial_expansion}). Given that the Neumann series converges for any time variable $t$, we attain a long-time behavior of the highly oscillatory solution to the equation.


By considering the potential function $f$ in the form (\ref{ideal_function_f}), we  avoid the occurrence of resonance points in the highly oscillatory integrals that appear in the Neumann series. In the paper \cite{KKRP}, the authors present the asymptotic expansion of the solution for a highly oscillatory equation with a single frequency, specifically, when $f$ is given by $f(x,t)=\alpha(x) \ee^{\ii\omega t}$.
This paper is a continuation of the research commenced in \cite{KKRP}  and is the next step towards approximating  the solution of equation (\ref{eq:1.5})  with a more general potential of the following form
\begin{eqnarray}\label{ideal_function_f2}
f(x,t) = \sum_{\substack{n=-N \\ n\neq 0}}^{N}\alpha_n(x,t)e^{i n\omega  t}, \quad \omega \gg 1, \quad N\in \mathbb{N}.
\end{eqnarray}
For such  a function, resonance points will appear repeatedly  in the integrals which constitute the Neumann series. In this context,
resonance occurs when the vector $\bm{n} \in \mathbb{Z}^d$ in the frequency term $\ee^{\ii\bm{n}^T \bm{\tau}}$, where $\bm{\tau}\in \mathbb{R}^d$, is orthogonal to the boundary of the integral $\int_{\sigma_d(t)} f(\bm{\tau})\ee^{\ii\bm{n}^T \bm{\tau}}\D \bm{\tau}$, with $\sigma_d(t)$ representing a $d$-dimensional simplex.
The presence of resonance points renders the approximation of high oscillation integrals even more difficult.
In this manuscript, we consider a special case of such a situation and demonstrate that if the vector $\bm{n}$ is orthogonal to one edge of the simplex $\sigma_d(t)$, then resonance points vanish in the sum of  integrals from the Neumann series.

The results presented in this paper extend those previously introduced in \cite{KKRP}. Specifically, we consider a potential function $f$ with multifrequencies (\ref{ideal_function_f}), rather than one with a single frequency. Furthermore, we establish the convergence of the Neumann series in a more restrictive Sobolev norm, instead of the $L^2$ norm, namely we provide the convergence in the norm of Sobolev space $H^{2p}(\Omega)$. In addition, we show that the proposed approach for computing highly oscillatory equation can be applied to equations featuring second-order time derivatives. The resonance case is also considered.

The paper is organized as follows: in Section \ref{sec2} we show that the solution of (\ref{eq:1.5}) can be expressed as the Neumann series.  In Section \ref{sec3}, we convert each term of the Neumann series into a sum of multivariate highly oscillatory integrals. Then we introduce necessary definitions which will be needed in Section \ref{sec4} for the asymptotic expansion of each highly oscillatory integral. Section \ref{sec5} is devoted to the error analysis of the proposed approximation.  Section \ref{sec6} concerns highly oscillatory integrals with resonance points which form the Neumann series. In Section \ref{sec7},  we show the utility of the asymptotic method to the wave equation. Numerical simulations are presented in Section \ref{sec8},   while the conclusion and ideas for the future research are discussed in Section \ref{sec9}.

\section{Representation of the solution as the Neumann series}\label{sec2}
In this section, we show that the solution of (\ref{eq:1.5}) can be presented
as the Neumann series for any $t>0$.
We start by introducing the necessary notations and making a general assumption, which will be needed throughout the text. 
\begin{notation}
	By  $ H^{2p}(\Omega) =W^{2p,2}(\Omega) $, where $p$ is a nonnegative integer, we understand the Sobolev space equipped with standard norm $\| \ \ \|_{H^{2p}(\Omega)}$, and $H^{p}_0(\Omega)$ is the closure of $C_0^{\infty}(\Omega)$ in the space $H^p(\Omega)$. The norm in the space $D(\mathcal{L}):=H_0^p(\Omega)\cap H^{2p}(\Omega)$ is denoted by $\| \ \ \|:=\| \ \ \|_{H^{2p}(\Omega)} $.
	Additionally, we slightly abuse the notation and also denote $u(t):=u( \cdot  ,t)$ as an element of an appropriate Banach space.
\end{notation}
\noindent The following assumption will apply throughout the text.
\begin{assumption} \label{assumption1}
	Suppose that
	\begin{enumerate}
		\item $\Omega $ is an open and bounded set in $\mathbb{R}^m$ with smooth boundary $\partial \Omega$.
		\item Operator  $-\mathcal{L}: D(\mathcal{L}):= H_0^p(\Omega)\cap H^{2p}(\Omega) \rightarrow L^2(\Omega)$, where $\mathcal{L}$  is of form (\ref{diff_operator}), is a strongly elliptic of order $2p$   and has smooth, complex-valued coefficients $a_{\textbf{p}}(x)$ on $\bar{\Omega}$.
		\item $u_0 \in D(\mathcal{L})$ and $f\in C\left([0,t^\star];H^{2p}(\Omega)\right)$.
	\end{enumerate}
\end{assumption}
We emphasize that the space $D(\mathcal{L})$ is the Banach space since it is a closed subspace of the Banach space $H^{2p}(\Omega)$.

Expressing the solution $u(t)$ of equation (\ref{eq:1.5}) with the highly oscillatory potential (\ref{ideal_function_f}) as the Neumann series facilitates its numerical approximation.
Namely, having expressed  $u(t)$ as the following series 
$$u(t) = \sum_{d=0}^{\infty}T^d e^{t\mathcal{L}}u_0,$$
for certain linear operator $T$, each term $T^d e^{t\mathcal{L}}u_0$ is actually a sum of multivariate highly oscillatory integrals. In the following, we will show that if  such an  integral  satisfies the nonresonance condition, then $T^d e^{t\mathcal{L}}u_0\sim \mathcal{O}(\omega^{-d})$.    Intuitively, for a large parameter $\omega \gg 1$, further terms of the  Neumann series become less relevant in the numerical approximation.

We start by applying Duhamel's formula, and we write equation (\ref{eq:1.5}) in the integral form
\begin{eqnarray}\label{eq:Duhamel}
u(t) = e^{t\mathcal{L}}u_0+ \int_{0}^{t}e^{(t-\tau)\mathcal{L}}f(\tau)u(\tau)\D\tau,
\end{eqnarray}
where  $u_0$ and $f(\tau), \ u(\tau)$ for  fixed $\tau$ are elements of the appropriate Banach spaces.
Assumption \ref{assumption1} guarantees that differential operator $\mathcal{L}$  is the infinitesimal generator of  a strongly continuous semigroup $\{\ee^{t\mathcal{L}}\}_ {t\geq 0}$ on $L^2(\Omega)$ and therefore $\max_{t \in [0,t^\star]}\|\ee^{t\mathcal{L}}\|_{L^2(\Omega)\leftarrow L^2(\Omega)}\leq C(t^\star)$, where $C(t^\star)$ is some constant independent of $t$
\cite{PAZY,EVANS}.  
Moreover,  
since the following a priori estimate hold
\begin{eqnarray}\label{apriori}
\|u\|_{H^{2p}(\Omega)}\leq C \left(\|\mathcal{L}u\|_{L^2(\Omega)}+\|u\|_{L^2(\Omega)}\right),
\end{eqnarray}
for any $u\in D(\mathcal{L})$, where $C>0$ is a constant \cite{agmon,PAZY}, operator $\ee^{t\mathcal{L}} $ is also bounded in norm $\| \ \ \|$ of space $D(\mathcal{L})$ for any $t\in [0,t^\star]$. Indeed, by using (\ref{apriori}) we have
\begin{eqnarray*}
	\|\ee^{t\mathcal{L}}\|_{D(\mathcal{L})\leftarrow D(\mathcal{L})}=\sup_{\|u\|\leq 1}\|\ee^{t\mathcal{L}}u\|&\leq& C \sup_{\|u\|\leq 1}\left(\|\mathcal{L}\ee^{t\mathcal{L}}u\|_{L^{2}(\Omega)}
	+\|\ee^{t\mathcal{L}}u\|_{L^{2}(\Omega)}\right)\\
	&=&C \sup_{\|u\|\leq 1}\left(\|\ee^{t\mathcal{L}}\mathcal{L}u\|_{L^{2}(\Omega)}
	+\|\ee^{t\mathcal{L}}u\|_{L^{2}(\Omega)}\right)\\
	&\leq&C \|\ee^{t\mathcal{L}}\|_{L^2(\Omega)\leftarrow L^2(\Omega)}\sup_{\|u\|\leq 1}\left(\|\mathcal{L}u\|_{L^{2}(\Omega)}
	+\|u\|_{L^{2}(\Omega)}\right)\\
 &\leq& C \|\ee^{t\mathcal{L}}\|_{L^2(\Omega)\leftarrow L^2(\Omega)}\sup_{\|u\|\leq 1}\left(C_1\|u\|
+\|u\|_{L^{2}(\Omega)}\right)\\
	&\leq& C(t^\star)
\end{eqnarray*}
and again constant $C(t^\star)$ depends on coefficients  $a_{\bm{p}}(x)$ of $\mathcal{L}$, but is independent of time variable $t$.

We introduce  the sequence $\left\{u^{[n]}\right\}_{n=0}^\infty \subset  C\left([0,t^\star];D(\mathcal{L})\right)=:V $ 
\begin{eqnarray} \label{partial_NS}
u^{[n]}(t)=\sum_{d=0}^{n }T^d \ee^{t\mathcal{L}}u_0,\quad u^{[0]}(t)=\ee^{t\mathcal{L}}u_0, \qquad t \in [0,t^\star].
\end{eqnarray}
Linear operator $T:V\rightarrow V$ is determined by
\begin{equation}\label{T}
Tu(t) = \int_{0}^{t}\ee^{(t-\tau)\mathcal{L}}f(\tau)u(\tau) \D\tau, \qquad t \in [0,t^\star],
\end{equation}
where $\mathcal{L}$   is the infinitesimal generator of  a strongly continuous semigroup,  and function $f\in C\left([0,t^\star];H^{2p}(\Omega)\right)$.
We  show that  sequence (\ref{partial_NS})  is convergent to the solution $u \in C^1([0,t^\star]; L^2(\Omega)) \cap C([0,t^\star];D(\mathcal{L}))$  of equation (\ref{eq:1.5}). 
We use the following estimate for the Sobolev norm
\begin{align}\label{Sobolev_estimation}
\|hg\|_{H^{2p}(\Omega)} \leq M 	\|h\|_{H^{2p}(\Omega)} 	\| g\|_{H^{2p}(\Omega)}, \qquad \Omega \subset \mathbb{R}^m, \qquad 2p>m/2,
\end{align}
for $h, g \in   H^{2p}(\Omega) $, where  $M$ depends only on $p$ and $m$.
In the beginning, we show that expression $T^d u(t)$
\begin{align*}\label{nested_int}
T^d u(t) = \int_{0}^{t}\ee^{(t-\tau_1)\mathcal{L}}f(\tau_1)\int_{0}^{\tau_1}\ee^{(\tau_1-\tau_2)\mathcal{L}}f(\tau_2)\dots\int_{0}^{\tau_{d-1}}\ee^{(\tau_{d-1}-\tau_d)\mathcal{L}}f(\tau_d)u(\tau_d)\D \tau_{d} \dots \D \tau_1,
\end{align*}
is uniformly bounded in norm $\| \ \ \|$ of space $D(\mathcal{L})$ by a constant independent of $t$.
\begin{lemma} \label{T^d_bounded}
	Suppose that Assumption \ref{assumption1} is satisfied. Let $u \in V$ ($V$ is the domain of operator $T$ defined in  (\ref{T})) and  $2p>m/2$.
	Then there exist a constant $M$ (depending only on $p$ and  $m$),  such that
	\begin{eqnarray}
	\label{norm_estimation}
	\|T^d u(t) \|  \leq M^d C_1^d C_2^dC_3 \frac{(t^\star)^  d}{d!}, \quad t\in[0,t^\star],
	\end{eqnarray} 
	where  $C_1:= \max_{t \in [0,t^\star]}\|\ee^{t\mathcal{L}}\|_{D(\mathcal{L})\leftarrow D(\mathcal{L})} $, $C_2:= \max_{t \in [0,t^\star]}\|f(t)\|$ and $C_3:= \max_{t \in [0,t^\star]}\|u(t)\|$. 
\end{lemma}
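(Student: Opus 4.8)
The plan is to argue by induction on $d$, peeling off the outermost integral in the identity $T^d u = T\bigl(T^{d-1}u\bigr)$ and combining three elementary bounds at each step, with the factor $(t^\star)^d/d!$ coming from the iterated integration over the ordered simplex $0\le\tau_d\le\cdots\le\tau_1\le t$.

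For the base case $d=0$ we have $T^{0}u(t)=u(t)$, so $\|T^{0}u(t)\|\le\max_{s\in[0,t^\star]}\|u(s)\|=C_3$, which is exactly the right-hand side of (\ref{norm_estimation}) with $d=0$. For the inductive step, suppose $\|T^{d-1}u(\tau)\|\le M^{d-1}C_1^{d-1}C_2^{d-1}C_3\,\tau^{d-1}/(d-1)!$ for all $\tau\in[0,t^\star]$. Since $u\in V$, the map $\tau\mapsto \ee^{(t-\tau)\mathcal{L}}f(\tau)(T^{d-1}u)(\tau)$ is continuous into $D(\mathcal{L})$, hence Bochner integrable, and $\|T^{d}u(t)\|=\bigl\|\int_0^t \ee^{(t-\tau)\mathcal{L}}f(\tau)(T^{d-1}u)(\tau)\,\D\tau\bigr\|\le\int_0^t\bigl\|\ee^{(t-\tau)\mathcal{L}}f(\tau)(T^{d-1}u)(\tau)\bigr\|\,\D\tau$. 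Inside the integral I would use, in order: (i) the bound $\|\ee^{(t-\tau)\mathcal{L}}w\|\le C_1\|w\|$ for $w\in D(\mathcal{L})$ and $0\le\tau\le t\le t^\star$, which is precisely the a priori/semigroup estimate derived just before the lemma; (ii) the multiplication estimate (\ref{Sobolev_estimation}), valid since $2p>m/2$, giving $\|f(\tau)(T^{d-1}u)(\tau)\|\le M\|f(\tau)\|\,\|(T^{d-1}u)(\tau)\|\le MC_2\|(T^{d-1}u)(\tau)\|$; and (iii) the inductive hypothesis. This yields $\|T^{d}u(t)\|\le MC_1C_2\int_0^t M^{d-1}C_1^{d-1}C_2^{d-1}C_3\,\tau^{d-1}/(d-1)!\,\D\tau=M^{d}C_1^{d}C_2^{d}C_3\,t^{d}/d!$, and since $t\le t^\star$ we obtain (\ref{norm_estimation}). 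Equivalently, unfolding all $d$ nested integrals simultaneously and bounding each semigroup factor $\ee^{(\tau_{j-1}-\tau_j)\mathcal{L}}$ by $C_1$, each multiplication by $f(\tau_j)$ by $MC_2$, and $\|u(\tau_d)\|$ by $C_3$, the remaining integral is the volume $t^{d}/d!$ of the ordered $d$-simplex.

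The only genuinely delicate points — and where I would be most careful — are: (a) checking that the product $f(\tau)(T^{d-1}u)(\tau)$ again lies in $D(\mathcal{L})=H_0^{p}(\Omega)\cap H^{2p}(\Omega)$, so that the $D(\mathcal{L})$-operator-norm bound with constant $C_1$ legitimately applies; membership in $H^{2p}(\Omega)$ is the algebra estimate (\ref{Sobolev_estimation}), while membership in $H_0^{p}(\Omega)$ follows from the fact that the factor $(T^{d-1}u)(\tau)$ already belongs to $H_0^{p}(\Omega)$ together with the regularity of $f(\tau)$; and (b) the continuity in $\tau$ of the integrand, needed both for Bochner integrability and for the interchange of norm and integral, which follows from the strong continuity of $\{\ee^{t\mathcal{L}}\}_{t\ge0}$, the continuity of $f$, and $T^{d-1}u\in V$. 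Beyond these functional-analytic checks the argument is just careful bookkeeping of the constants $M$, $C_1$, $C_2$, $C_3$.
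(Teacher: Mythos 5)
Your proposal is correct and follows essentially the same route as the paper: the paper bounds the fully unfolded $d$-fold nested integral by applying the semigroup bound $C_1$, the multiplication estimate (\ref{Sobolev_estimation}) with $MC_2$, and $C_3$ factor by factor, and then uses the volume $(t^\star)^d/d!$ of the ordered simplex, which is exactly your ``equivalent unfolding'' remark; your inductive packaging of the same three bounds is only a cosmetic difference (and even yields the marginally sharper $t^d/d!$). Your explicit check that $f(\tau)(T^{d-1}u)(\tau)\in D(\mathcal{L})$ is a point the paper leaves implicit, and is a welcome addition rather than a deviation.
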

\begin{proof}
	By applying inequality  (\ref{Sobolev_estimation}), and using basic properties of the operator norm, we can  estimate  the term $\|T^d u(t) \|$
	\begin{eqnarray*}
		\|T^d u(t)\|&=&\left\| \int_{0}^{t}\ee^{(t-\tau_1)\mathcal{L}}f(\tau_1)\int_{0}^{\tau_1}\ee^{(\tau_1-\tau_2)\mathcal{L}}f(\tau_2)\dots\int_{0}^{\tau_{d-1}}\ee^{(\tau_{d-1}-\tau_d)\mathcal{L}}f(\tau_d)u(\tau_d)\D \tau_{d} \dots \D \tau_1\right\|\\
		&=&
		\left\| \int_{0}^{t}\int_{0}^{\tau_1}\dots\int_{0}^{\tau_{d-1}}\ee^{(t-\tau_1)\mathcal{L}}f(\tau_1)\ee^{(\tau_1-\tau_2)\mathcal{L}}f(\tau_2)\dots\ee^{(\tau_{d-1}-\tau_d)\mathcal{L}}f(\tau_d)u(\tau_d)\D \tau_{d} \dots \D \tau_1\right\|\\
		& \leq& \int_{0}^{t}\int_{0}^{\tau_1}\dots\int_{0}^{\tau_{d-1}}\left\|\ee^{(t-\tau_1)\mathcal{L}}f(\tau_1)\ee^{(\tau_1-\tau_2)\mathcal{L}}f(\tau_2)\dots\ee^{(\tau_{d-1}-\tau_d)\mathcal{L}}f(\tau_d)u(\tau_d)\right\|\D \tau_{d} \dots \D \tau_1\\
		&\leq& \int_{0}^{t}\int_{0}^{\tau_1}\dots\int_{0}^{\tau_{d-1}}\left\|\ee^{(t-\tau_1)\mathcal{L}}\right\|\left\|f(\tau_1)\ee^{(\tau_1-\tau_2)\mathcal{L}}f(\tau_2)\dots\ee^{(\tau_{d-1}-\tau_d)\mathcal{L}}f(\tau_d)u(\tau_d)\right\|\D \tau_{d} \dots \D \tau_1\\
		&\leq& C_1C_2M\int_{0}^{t}\int_{0}^{\tau_1}\dots\int_{0}^{\tau_{d-1}} \left\|\ee^{(\tau_1-\tau_2)\mathcal{L}}f(\tau_2)\dots\ee^{(\tau_{d-1}-\tau_d)\mathcal{L}}f(\tau_d)u(\tau_d)\right\|\D \tau_{d} \dots \D \tau_1=: (\bm{\star})\\
	\end{eqnarray*}
	Constant $M$ comes from inequality (\ref{Sobolev_estimation}). Proceeding analogously and repeatedly we obtain 
	\begin{eqnarray*}
		(\bm{\star})&\leq&  M^d C_1^d C_2^dC_3\int_{0}^{t}\int_{0}^{\tau_1}\dots\int_{0}^{\tau_{d-1}}\D \tau_{d} \dots \D \tau_1  \leq  M^d C_1^d C_2^dC_3 \frac{(t^\star)^  d}{d!}, \quad \forall t\in[0,t^\star],
	\end{eqnarray*}
	and the right hand side of the above inequality is independent of $t$, which completes the proof.
\end{proof}
\begin{theorem} \label{theorem_Neumann_Series}
	Suppose that Assumption \ref{assumption1} is satisfied,  $2p>m/2$ and $t \in [0,t^\star]$. Then series  $$\sum_{d=0}^{\infty }T^d \ee^{t\mathcal{L}}u_0, \quad t \in [0,t^\star],$$
	converges absolutely and uniformly to a function $u^\star \in C^1([0,t^\star];L^2(\Omega)) \cap C([0,t^\star];D(\mathcal{L}))$ and  $u^\star$ is the solution of the equation (\ref{eq:1.5}).
\end{theorem}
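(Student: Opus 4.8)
The plan is to obtain convergence of the series from the bound of Lemma~\ref{T^d_bounded} via a Weierstrass $M$-test, then to pass to the limit in the recursion defining $u^{[n]}$ to identify the limit with a solution of the Duhamel equation~(\ref{eq:Duhamel}), and finally to upgrade this mild solution to a classical one by the standard regularity theory for inhomogeneous abstract Cauchy problems. For the first step, apply Lemma~\ref{T^d_bounded} with $u(t)=\ee^{t\mathcal{L}}u_0$, so that $C_3=\max_{t\in[0,t^\star]}\|\ee^{t\mathcal{L}}u_0\|\le C_1\|u_0\|$ and
\begin{equation*}
\sum_{d=0}^{\infty}\ \max_{t\in[0,t^\star]}\bigl\|T^d\ee^{t\mathcal{L}}u_0\bigr\|\ \le\ C_1\|u_0\|\sum_{d=0}^{\infty}\frac{(MC_1C_2t^\star)^d}{d!}\ =\ C_1\|u_0\|\,\ee^{MC_1C_2t^\star}<\infty .
\end{equation*}
Since $u_0\in D(\mathcal{L})$, the map $t\mapsto\ee^{t\mathcal{L}}u_0$ is continuous into $L^2(\Omega)$ and so is $\mathcal{L}\ee^{t\mathcal{L}}u_0=\ee^{t\mathcal{L}}\mathcal{L}u_0$, hence by~(\ref{apriori}) it is continuous into $D(\mathcal{L})$, i.e.\ $\ee^{\cdot\,\mathcal{L}}u_0\in V$; as $T$ maps $V$ into $V$, each term $T^d\ee^{\cdot\,\mathcal{L}}u_0$ lies in $V$. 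Because $V=C([0,t^\star];D(\mathcal{L}))$ is a Banach space, the displayed bound shows that the partial sums~(\ref{partial_NS}) converge absolutely and uniformly to some $u^\star\in V$.

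Next, the case $d=1$ of Lemma~\ref{T^d_bounded} gives $\max_t\|Tu(t)\|\le MC_1C_2t^\star\max_t\|u(t)\|$, so $T$ is a bounded linear operator on $V$. Letting $n\to\infty$ in $u^{[n+1]}(t)=\ee^{t\mathcal{L}}u_0+Tu^{[n]}(t)$ and using continuity of $T$ yields $u^\star(t)=\ee^{t\mathcal{L}}u_0+\int_0^t\ee^{(t-\tau)\mathcal{L}}f(\tau)u^\star(\tau)\D\tau$, i.e.\ $u^\star$ solves~(\ref{eq:Duhamel}); in particular $u^\star(0)=u_0$.

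It remains to show that this $u^\star$ is a classical solution. Put $g(t):=f(t)u^\star(t)$. By~(\ref{Sobolev_estimation}) together with $f\in C([0,t^\star];H^{2p}(\Omega))$, $u^\star\in C([0,t^\star];D(\mathcal{L}))$, and the fact that multiplication by $f$ maps $H_0^p(\Omega)$ into itself (approximate $u^\star(t)$ in $H_0^p(\Omega)$ by $C_0^\infty(\Omega)$ functions, whose products with $f$ have compact support in $\Omega$, then pass to the limit using~(\ref{Sobolev_estimation})), one gets $g\in C([0,t^\star];D(\mathcal{L}))$, hence $\mathcal{L}g\in C([0,t^\star];L^2(\Omega))$. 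Write $u^\star(t)=\ee^{t\mathcal{L}}u_0+v(t)$ with $v(t)=\int_0^t\ee^{(t-\tau)\mathcal{L}}g(\tau)\D\tau$. Since $u_0\in D(\mathcal{L})$, the first term is a classical solution of $\partial_tw=\mathcal{L}w$; for the second, the standard result for inhomogeneous problems generated by a $C_0$-semigroup \cite{PAZY} applies because $g(\tau)\in D(\mathcal{L})$ for every $\tau$ and $\mathcal{L}g$ is continuous, giving $v(t)\in D(\mathcal{L})$, $v\in C^1([0,t^\star];L^2(\Omega))$ and $v'(t)=\mathcal{L}v(t)+g(t)$. Adding the two contributions yields $u^\star\in C^1([0,t^\star];L^2(\Omega))\cap C([0,t^\star];D(\mathcal{L}))$ with $\partial_tu^\star=\mathcal{L}u^\star+fu^\star$ and $u^\star(0)=u_0$, which is~(\ref{eq:1.5}).

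The convergence and integral-equation steps are immediate once Lemma~\ref{T^d_bounded} is in hand. The genuinely delicate point is the regularity step: one must verify that the inhomogeneity $g=fu^\star$ takes values in $D(\mathcal{L})=H_0^p(\Omega)\cap H^{2p}(\Omega)$ — in particular that the boundary condition encoded in $H_0^p(\Omega)$ survives multiplication by $f$ — and then invoke the correct version of the semigroup regularity theorem so as to conclude that the mild (Duhamel) solution is in fact classical rather than merely weak.
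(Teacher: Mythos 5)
Your proof is correct and follows essentially the same route as the paper: a Weierstrass $M$-test based on Lemma~\ref{T^d_bounded}, identification of the limit with the Duhamel (mild) solution, and an upgrade to a classical solution via the observation that the inhomogeneity $fu^\star$ takes values in $D(\mathcal{L})$ with $\mathcal{L}(fu^\star)$ continuous. The only differences are cosmetic: you pass to the limit in the recursion rather than using the telescoping identity $(I-T)\sum_{d=0}^{n}T^d=I-T^{n+1}$, and you justify $f(t)u^\star(t)\in H_0^p(\Omega)$ explicitly, a point the paper merely asserts.
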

\begin{proof}
	Let us  denote $C_1:= \max_{t \in [0,t^\star]}\|\ee^{t\mathcal{L}}\|_{D(\mathcal{L})\leftarrow D(\mathcal{L})}$, $C_2:= \max_{t \in [0,t^\star]}\|f(t)\|$. By using (\ref{norm_estimation}), we estimate 
	$$\max_{t \in [0,t^\star]}\|T^d \ee^{t\mathcal{L}}u_0\|\leq\underbrace{M^d C_1^{d+1} C_2^d \|u_0\|\frac{(t^\star)^d}{d!}}_{=:A_d}, \quad \forall \ d \in \mathbb{N}.$$
	Since $\sum_{d=0}^{\infty}A_d =   C_1 \|u_0\|\exp(t^\star M C_1  C_2) $ 
	and space $D(\mathcal{L})$ is a Banach space, the Weierstrass M-test ensures that the  sequence $\left\{u^{[n]}\right\}_{n=0}^\infty$ is convergent absolutely and uniformly to some function $u^\star \in C([0,t^\star],D(\mathcal{L})) $.
	Thus, for each $t \in [0,t^\star]$ we have
	\begin{eqnarray}\label{u_star}
	u^{\star}(t):=\lim_{n \to \infty } u^{[n]}(t)=\sum_{d=0}^{\infty}T ^d \ee^{t\mathcal{L}}u_0,
	\end{eqnarray}
	where $u^{[n]}$ is defined by (\ref{partial_NS}).
	The operator $I-T$ is a bijection and function $u^{\star}$ is the solution of (\ref{eq:Duhamel}).
	Indeed, it is easy to observe that 
	$$(I-T)\sum_{d=0}^{n}T^d= \sum_{d=0}^{n}T^d(I-T)= I-T^{n+1}.$$
	Since $\|T^{n+1}\|\rightarrow 0$ as $ n \rightarrow \infty$,  for each $t \in [0,t^\star]$ we obtain
	$$(I-T)u^\star(t) = \ee^{t\mathcal{L}}u_0,$$
	which is the equivalent form of  equation (\ref{eq:Duhamel}). 
	Moreover, one can observe that  the mapping $t \mapsto u^{\star}(t)$ is differentiable for each $t>0$, when $u_0 \in D(\mathcal{L})$ and $f \in C([0,t^\star],H^{2p}(\Omega)) $. Indeed, by considering the mild formulation of  (\ref{eq:1.5}), namely
	\begin{eqnarray}\label{Mild}
	u^\star(t) = \ee^{t\mathcal{L}}u_0+ \int_{0}^{t}\ee^{(t-\tau)\mathcal{L}}f(\tau)u^\star(\tau)\D\tau,
	\end{eqnarray}
	of course  the first part $\ee^{t\mathcal{L}}u_0$ is continuously differentiable, $\frac{d}{dt}\ee^{t\mathcal{L}}u_0 = \mathcal{L}\ee^{t\mathcal{L}}u_0, \ \forall t>0$, and since $u^\star(\tau)$ is continuous, function $g(t)$, defined as $g(t):=\int_{0}^{t}\ee^{(t-\tau)\mathcal{L}}f(\tau)u^\star(\tau)\D\tau$, is also continuously differentiable (see \cite{Renardy},  Theorem 12.16). Since $\mathcal{L}$ is a closed operator, we have
	\begin{equation}\label{pochodna_wyr}
	\frac{d}{dt}u^\star(t)  =\ee^{t\mathcal{L}}\mathcal{L}u_0+ f(t)u^\star(t)+\int_{0}^{t}\ee^{(t-\tau)\mathcal{L}}\mathcal{L}f(\tau)u^\star(\tau)\D\tau.
	\end{equation}
	Since $f(t) \in H^{2p}(\Omega)$ and $u^\star(t) \in H^{2p}(\Omega)\cap H_0^p(\Omega)$, $\forall t\in[0,t^\star]$,  it follows that $f(t)u^\star(t) \in H^{2p}(\Omega)\cap H_0^p(\Omega)$ = $D(\mathcal{L})$. Therefore,  each expression on the right-hand side of (\ref{pochodna_wyr}) is well-defined.
	We have shown that the solution  $u^\star $ of (\ref{Mild})  belongs to space  $ C^1([0,t^\star]; L^2(\Omega)) \cap C([0,t^\star];D(\mathcal{L}))$. Now it is obvious that $u^\star$ is the solution of (\ref{eq:1.5}).
\end{proof}
The above convergence proof of the sequence (\ref{partial_NS}) is similar to that presented in \cite{KKRP}. However, to show the convergence of (\ref{partial_NS}) to the solution of equation (\ref{eq:1.5}), one needs to provide the convergence of the sequence (\ref{partial_NS}) in the appropriate Sobolev norm.

Needless to say, the existence and uniqueness of linear evolution equations is a well-known and well-established theory in mathematics. Theorem \ref{theorem_Neumann_Series} is only needed to show that solution to the equation can be presented as a series of multivariate integrals. We utilize this fact  in the numerical approximation a highly oscillatory solution.  
It is easy to notice that in the proof of Theorem \ref{theorem_Neumann_Series}, operator $T$ need not be a contraction mapping, but the Neumann series is still convergent. It can be shown that for arbitrarily time variable $t>0$, there exists number $s$, such that for any $d>s$, operator $T^d$ is a contraction mapping, i.e. $\|T^d\|<1$.

\begin{remark}
	The  mild formulation of equation (\ref{eq:1.5}) allows us  to relax the assumptions on the initial condition $u_0$. Instead of requiring $u_0 \in D(\mathcal{L})$, let us assume  for a moment that $u_0\in L^2(\Omega)$ and additionally that $f$ is a bounded function $\|f\|_\infty <\infty$. By following a similar approach to the proof of Theorem \ref{theorem_Neumann_Series}, one can also show the convergence of the sequence (\ref{partial_NS}) in the norm of the space $C([0,t^\star];L^2(\Omega))$ to the mild solution $u^\star$ of equation (\ref{eq:1.5}). In this case, the additional assumption $2p > m/2$ becomes redundant.
\end{remark}
The series (\ref{u_star}) is known as the Neumann series.
Equation (\ref{eq:Duhamel}) can be expressed symbolically as
$$u(t) = \ee^{t\mathcal{L}}u_0+ Tu(t).$$
It can be easy verified that for each $n$, the term $u^{[n]}(t)$ satisfies the relation
$$u^{[n]}(t) = \ee^{t\mathcal{L}}u_0+Tu^{[n-1]}(t), \quad u^{[0]}(t) = \ee^{t\mathcal{L}}u_0.$$
Therefore, in the remaining text, we will use the terms `$n$-th partial sum of the Neumann series' and `$n$-th iteration of the equation' interchangeably to refer to the expression $u^{[n]}(t)$.

\section{Presenting terms of the Neumann series as a  sum of multivariate integrals}\label{sec3}
According to Theorem \ref{theorem_Neumann_Series}, the solution to equation (\ref{eq:1.5}) can be expressed as the Neumann series,   $u(t) =\sum_{d=0}^{\infty}T^d\ee^{t\mathcal{L}}u_0$.
In this section, we show that  for function $f$, defined in (\ref{ideal_function_f}), each term  $T^d\ee^{t\mathcal{L}}, \  d=1,2,\dots$ of the Neumann series   is a sum of multivariate highly oscillatory integrals. Subsequently, we will prove that each $T^d\ee^{t\mathcal{L}}$ can be approximated by a partial sum of the asymptotic expansion.
By linearity of semigroup operator $\{\ee^{t\mathcal{L}}\}_{t\geq 0}$,  we can convert expression $T^d\ee^{t\mathcal{L}}$ into a more convenient form,  for function $f$ defined in (\ref{ideal_function_f})
\footnotesize
\begin{eqnarray}\label{sum_of integrals}
&&T^d \ee^{t\mathcal{L}} =  \nonumber
\nonumber \int_{0}^{t}\int_{0}^{\tau_d}\dots\int_{0}^{\tau_{2}}\ee^{(t-\tau_d)\mathcal{L}}f(\tau_d)\ee^{(\tau_d-\tau_{d-1})\mathcal{L}}f(\tau_{d-1})\dots \ee^{(\tau_{2}-\tau_1)\mathcal{L}}f(\tau_1)\ee^{\tau_1\mathcal{L}}\D \tau_{1} \dots \D \tau_d=\\ \nonumber
&& \sum_{\substack{1\leq n_1,\dots n_d\leq N}}\int_{0}^t\int_{0}^{\tau_d}\dots\int_{0}^{\tau_2}\ee^{(t-\tau_d)\mathcal{L}}\alpha_{n_d}\ee^{(\tau_d-\tau_{d-1})\mathcal{L}}\alpha_{n_{d-1}}\dots \ee^{(\tau_{2}-\tau_1)\mathcal{L}}\alpha_{n_1}\ee^{\tau_1\mathcal{L}}\ee^{\ii\omega(\tau_1 n_1+\dots+\tau_{d}n_{d})}\D \tau_1\dots\D\tau_d=\\
&& \sum_{\bm{n}\in \{1,\dots,N\}^d}\int_{\sigma_d(t)}F_{\bm{n}}(t,\bm{\tau})\ee^{\ii\omega  \bm{n}^T\bm{\tau} }\D \bm{\tau} = \sum_{\bm{n}\in \{1,\dots,N\}^d} I[F_{\bm{n}},\sigma_d(t)], \nonumber
\end{eqnarray}
\normalsize
where
\begin{eqnarray} \label{function_f_operator}
\bm{\tau} &=&  (\tau_1,\tau_2,\dots,\tau_d), \nonumber\\
F_{\bm{n}}(t,\bm{\tau}) &=& \ee^{(t-\tau_d)\mathcal{L}}\alpha_{n_d}\ee^{(\tau_d-\tau_{d-1})\mathcal{L}}\alpha_{n_{d-1}}\dots \ee^{(\tau_{2}-\tau_1)\mathcal{L}}\alpha_{n_1}\ee^{\tau_1\mathcal{L}},\\
I[F_{\bm{n}},\sigma_d(t)]&=&  \int_{\sigma_d(t)}F_{\bm{n}}(t,\bm{\tau})\ee^{\ii\omega \bm{n}^T \bm{\tau} }\D \bm{\tau} \label{I[F,S_d(t)]}
\end{eqnarray}
and $\sigma_d(t)$ denotes a $d$-dimensional simplex
$$\sigma_d(t)=\{\bm{\tau}:=(\tau_1,\tau_2,\dots,\tau_d)\in \mathbb{R}^d:t\geq \tau_d\geq \tau_{d-1}\geq \dots \geq \tau_2\geq \tau_1 \geq 0\}.$$
Using the above notation,  solution $u(t)$ of (\ref{eq:Duhamel}) can be written as
$$u(t) = \ee^{t\mathcal{L}}u_0+\sum_{d=1}^{\infty}\sum_{\bm{n}\in \{1,\dots,N\}^d} I[F_{\bm{n}},\sigma_d(t)]u_0.$$
Each $F_{\bm{n}}(t,\bm{\tau})$ is  a linear operator, $F_{\bm{n}}(t,\bm{\tau}): \mathcal{D}(\mathcal{L})\rightarrow \mathcal{D}(\mathcal{L})$ and  $\bm{n}\in \mathbb{N}^d$ is a vector corresponding to  $F_{\bm{n}}$ in the sense that $\bm{n}$ appears in the frequency exponent of integral (\ref{I[F,S_d(t)]}).


In our approach, to expand asymptotically multivariate highly oscillatory integral of type (\ref{I[F,S_d(t)]}), we will exploit repeatedly Fubini's theorem, properties of the semigroup operator and integration by parts. 
Therefore  one should be able to compute successive partial derivatives of  the expression of type (\ref{function_f_operator}). One may prove by induction the following 
\begin{lemma}\label{lemma_ady}
	Assume $\alpha \in D(\mathcal{L}^k)$, where $D(\mathcal{L}^k):=\{u\in D(\mathcal{L}^{k-1}): \mathcal{L}^{k-1}u \in D(\mathcal{L})\}$, $k=1,2,\dots$ Then
	\begin{eqnarray*}\label{equality_ady}
		&&	\partial_\tau^k\left(\ee^{(t-\tau)\mathcal{L}}\alpha\ee^{\tau\mathcal{L}}\right)= \ee^{(t-\tau)\mathcal{L}}\underbrace{\Big[\dots\big[[\alpha,\mathcal{L}],\mathcal{L}\big],\dots\Big]}_{\mathcal{L} \ appears\ k \ \text{times}}\ee^{\tau\mathcal{L}}=(-1)^k\ee^{(t-\tau)\mathcal{L}}ad_\mathcal{L}^k\big(\alpha\big)\ee^{\tau\mathcal{L}},
	\end{eqnarray*}
	where $ad_\mathcal{L}^0\big(\alpha\big)=\alpha, \quad  ad_\mathcal{L}^{k}\big(\alpha\big)=\big[\mathcal{L}, ad_\mathcal{L}^{k-1}\big(\alpha\big)\big]$ and $[X,Y]\equiv XY-YX$ is the commutator of $X$ and $Y$.
\end{lemma}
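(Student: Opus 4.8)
The plan is a straightforward induction on $k$, powered by a single algebraic identity. For any differential operator $A$ whose coefficients are regular enough that $A$ maps the range of the semigroup into $D(\mathcal{L})$, I claim
$$\partial_\tau\bigl(\ee^{(t-\tau)\mathcal{L}}A\,\ee^{\tau\mathcal{L}}\bigr)=\ee^{(t-\tau)\mathcal{L}}\,[A,\mathcal{L}]\,\ee^{\tau\mathcal{L}}=-\,\ee^{(t-\tau)\mathcal{L}}\,ad_\mathcal{L}(A)\,\ee^{\tau\mathcal{L}},\qquad 0<\tau<t.$$
This is the base case $k=1$ taken with $A=\alpha$: since $\mathcal{L}$ is strongly elliptic it generates an analytic semigroup, so for $\tau\in(0,t)$ the maps $\tau\mapsto\ee^{\tau\mathcal{L}}v$ and $\tau\mapsto\ee^{(t-\tau)\mathcal{L}}v$ are strongly differentiable with derivatives $\mathcal{L}\ee^{\tau\mathcal{L}}v=\ee^{\tau\mathcal{L}}\mathcal{L}v$ and $-\mathcal{L}\ee^{(t-\tau)\mathcal{L}}v$ respectively, and the Leibniz rule for strongly differentiable operator-valued functions applied to the triple product $\ee^{(t-\tau)\mathcal{L}}\cdot\alpha\cdot\ee^{\tau\mathcal{L}}$ gives
$$\partial_\tau\bigl(\ee^{(t-\tau)\mathcal{L}}\alpha\,\ee^{\tau\mathcal{L}}\bigr)=-\,\ee^{(t-\tau)\mathcal{L}}\mathcal{L}\alpha\,\ee^{\tau\mathcal{L}}+\ee^{(t-\tau)\mathcal{L}}\alpha\mathcal{L}\,\ee^{\tau\mathcal{L}}=\ee^{(t-\tau)\mathcal{L}}[\alpha,\mathcal{L}]\,\ee^{\tau\mathcal{L}},$$
and $[\alpha,\mathcal{L}]=-[\mathcal{L},\alpha]=-\,ad_\mathcal{L}^1(\alpha)$.

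For the inductive step, I would assume the formula holds at level $k-1$, namely $\partial_\tau^{k-1}\bigl(\ee^{(t-\tau)\mathcal{L}}\alpha\,\ee^{\tau\mathcal{L}}\bigr)=(-1)^{k-1}\ee^{(t-\tau)\mathcal{L}}\,ad_\mathcal{L}^{k-1}(\alpha)\,\ee^{\tau\mathcal{L}}$. The operator $ad_\mathcal{L}^{k-1}(\alpha)$ is again a linear differential operator, whose coefficients are polynomial combinations of the (smooth) coefficients $a_{\bm p}$, their derivatives, and derivatives of $\alpha$ of order at most $2p(k-1)$; since $\alpha\in D(\mathcal{L}^{k})\subset H^{2pk}(\Omega)$ these coefficients lie in the spaces needed for the displayed identity to apply with $A=ad_\mathcal{L}^{k-1}(\alpha)$. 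Differentiating once more therefore produces $(-1)^{k-1}\ee^{(t-\tau)\mathcal{L}}[ad_\mathcal{L}^{k-1}(\alpha),\mathcal{L}]\,\ee^{\tau\mathcal{L}}=(-1)^{k}\ee^{(t-\tau)\mathcal{L}}\,ad_\mathcal{L}^{k}(\alpha)\,\ee^{\tau\mathcal{L}}$, which is the claim at level $k$. The nested-commutator form in the statement is just an unwinding of the same computation: $[\cdots[[\alpha,\mathcal{L}],\mathcal{L}],\ldots,\mathcal{L}]$ with $k$ copies of $\mathcal{L}$ equals $(-1)^{k}ad_\mathcal{L}^{k}(\alpha)$ by the sign bookkeeping $[\beta,\mathcal{L}]=-[\mathcal{L},\beta]$.

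The step I expect to be the real obstacle is not the algebra but the functional-analytic bookkeeping that legitimizes each line: one must verify that for $\alpha\in D(\mathcal{L}^{k})$ every intermediate operator $ad_\mathcal{L}^{j}(\alpha)$, $0\le j\le k$, maps the range of $\ee^{\tau\mathcal{L}}$ into $D(\mathcal{L})$ (so the outer semigroup can be applied and, at the next stage, differentiated), that $\mathcal{L}$ may be moved past $\ee^{\tau\mathcal{L}}$ on these domains, and that the strong-operator Leibniz rule is applicable at each stage. Analyticity of $\{\ee^{\tau\mathcal{L}}\}_{\tau\ge0}$, which follows from strong ellipticity, supplies the interior smoothing for $\tau\in(0,t)$; the endpoints $\tau=0$ and $\tau=t$ cause no trouble because the hypothesis $\alpha\in D(\mathcal{L}^{k})$ together with the standing assumptions keeps the relevant vectors in $D(\mathcal{L})$, and in any case these expressions are ultimately integrated over the simplex $\sigma_d(t)$, where the boundary has measure zero. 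Once this domain tracking is arranged, the induction closes immediately.
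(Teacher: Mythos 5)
Your proof is correct and matches the paper's approach: the paper leaves Lemma \ref{lemma_ady} as an induction exercise and proves the time-dependent generalization (Lemma \ref{lemma_ady_time} in the Appendix) by exactly this induction — a single product-rule differentiation producing one commutator per step — of which your argument is the specialization $\alpha'\equiv 0$. Your additional domain-tracking remarks (that each $ad_\mathcal{L}^{j}(\alpha)$ must land where $\mathcal{L}$ and the semigroup can act) are sound and go slightly beyond what the paper records.
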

Unless otherwise stated, we  assume that each appearing expression of type $\mathcal{L}^k\alpha_{n}$ is well defined for  any functions $\alpha_{n}, n=1,\dots,N$.

In the asymptotic expansion of integral of type (\ref{I[F,S_d(t)]}), the number of terms in a partial sum of the asymptotic series grows exponentially as the dimension $d$ of the domain  increases, therefore to facilitate the notation we introduce the following definitions.

\begin{defi}
	By $\bm{v}_\ell^d \in \{0,1\}^d$, $\ell=0,1,\dots d$ we denote the vertices of simplex $\sigma_d(1)$
	\begin{eqnarray*}
		\bm{v}_0^d &=&(1,1,1,\dots,1,1);  \\
		\bm{v}_{1}^d &=&(0,1,1,\dots,1,1) ;\\
		&\vdots&\\
		\bm{v}_{\ell}^d &=&(\underbrace{0,\dots,0}_{\ell \ zeros  },1,\dots,1).
	\end{eqnarray*}
\end{defi}
\begin{defi}\label{set_of_vectors}
	Let 	$\Phi_\ell ^d, \ \ell=0,1,\dots,d$, be a family of points such that
	$$\Phi_0 ^d= \{(1,1,\dots,1)\},$$
	$$\Phi_\ell ^d= \{\bm{\phi} \in \{0,1\}^d: \bm{\phi} = (\phi_1,\phi_2,\dots,\phi_d), \  \phi_\ell=0, \ \phi_j=1, \ \text{for} \ j>\ell \}$$
	In other words, $\bm{\phi}\in \Phi_\ell ^d$ if the last zero is at $\ell$-th coordinate of $\bm{\phi}$, next coordinates are ones only.
\end{defi}
\begin{defi}\label{sequence_partial_derivative} 
	For $\bm{\phi}= (\phi_1,\phi_2,\dots,\phi_d)\in \{0,1\}^d$ and multi-index $\bm{k}=(k_1,\dots,k_d)$, we define the sequence of partial derivatives of $F(t,\tau_1,\tau_2,\dots,\tau_d)$
	\begin{eqnarray*}
		&&F^{\bm{k}}[\bm{\phi}](t)=\partial_{\tau_d}^{k_d}\left(\dots \partial_{\tau_2}^{k_2}\left(\partial_{\tau_1}^{k_1}F(t,\tau_1,\tau_2,\dots,\tau_d)\,\rule[-4pt]{0.55pt}{17pt}_{\,\tau_1=\phi_1\tau_{2}}\right)\rule[-4pt]{0.55pt}{17pt}_{\,\tau_{2}=\phi_2\tau_{3}}\dots\right)\rule[-4pt]{0.55pt}{17pt}_{\,\tau_{d}=\phi_d t}.
	\end{eqnarray*}
\end{defi}

Definition \ref{sequence_partial_derivative} we understand as follows: 
first, we compute the $k_1$-th partial derivative for variable $\tau_1$. Then we substitute in place of $\tau_1$ variable   $\tau_2$ or $0$. Subsequently, we compute the $k_2$-th partial derivative with respect to variable $\tau_2$ and then again we substitute $\tau_2=\tau_3$ or $\tau_2=0$. We proceed in this manner until we compute the $k_d$-th partial derivative with respect to  $\tau_d$ and substitute $\tau_d=\phi_dt$, where $\phi_d =0$ or $\phi_d =1$. 
\begin{exam}
	To present how Definition \ref{sequence_partial_derivative} operates for $F_{\bm{n}}$ definied in (\ref{function_f_operator}), we compute the first few expressions $F^{\bm{k}}_{\bm{n}}[\bm{\phi}](t)$ for  $d=1,2$. We assume that  $\alpha_{n}, \ n=1,\dots,N$ are  sufficiently smooth functions.
	\begin{eqnarray*}
		F^{k_1}_{n_1}[1](t)&=& \partial_{\tau_1}^{k_1}\left(\ee^{(t-\tau_1)\mathcal{L}}\alpha_{n_1}\ee^{\tau_1\mathcal{L}}\right)\rule[-4pt]{0.7pt}{17pt}_{\,\tau_1=t}=(-1)^{k_1}ad_\mathcal{L}^{k_1}\big(\alpha_{n_1}\big)\ee^{t\mathcal{L}},\\
		F_{n_1}^{k_1}[0](t)&=& \partial_{\tau_1}^{k_1}\left(\ee^{(t-\tau_1)\mathcal{L}}\alpha_{n_1}\ee^{\tau_1\mathcal{L}}\right)\rule[-4pt]{0.7pt}{17pt}_{\,\tau_1=0}=(-1)^{k_1}\ee^{t\mathcal{L}}ad_\mathcal{L}^{k_1}\big(\alpha_{n_1}\big),\\
		F^{\bm{k}}_{\bm{n}}[(1,1)](t)&=&\partial_{\tau_2}^{k_2}\left(\partial_{\tau_1}^{k_1}\left(\ee^{(t-\tau_2)\mathcal{L}}\alpha_{n_2}\ee^{(\tau_2-\tau_1)\mathcal{L}}\alpha_{n_1}\ee^{\tau_1\mathcal{L}}\right)\rule[-4pt]{0.7pt}{17pt}_{\,\tau_1=\tau_2}\right)\rule[-4pt]{0.7pt}{17pt}_{\,\tau_2=t} =(-1)^{|\bm{k}|} ad^{k_2}_{\mathcal{L}}\left(\alpha_{n_2} ad_{\mathcal{L}}^{k_1}(\alpha_{n_1})\right)\ee^{t\mathcal{L}},\\
		F^{\bm{k}}_{\bm{n}}[(1,0)](t)&=&\partial_{\tau_2}^{k_2}\left(\partial_{\tau_1}^{k_1}\left(\ee^{(t-\tau_2)\mathcal{L}}\alpha_{n_2}\ee^{(\tau_2-\tau_1)\mathcal{L}}\alpha_{n_1}\ee^{\tau_1\mathcal{L}}\right)\rule[-4pt]{0.7pt}{17pt}_{\,\tau_1=\tau_2}\right)\rule[-4pt]{0.7pt}{17pt}_{\,\tau_2=0} =(-1)^{|\bm{k}|}  \ee^{t\mathcal{L}}ad^{k_2}_{\mathcal{L}}\left(\alpha_{n_2} ad_{\mathcal{L}}^{k_1}(\alpha_{n_1})\right),\\
		F^{\bm{k}}_{\bm{n}}[(0,1)](t)&=&\partial_{\tau_2}^{k_2}\left(\partial_{\tau_1}^{k_1}\left(\ee^{(t-\tau_2)\mathcal{L}}\alpha_{n_2}\ee^{(\tau_2-\tau_1)\mathcal{L}}\alpha_{n_1}\ee^{\tau_1\mathcal{L}}\right)\rule[-4pt]{0.7pt}{17pt}_{\,\tau_1=0}\right)\rule[-4pt]{0.7pt}{17pt}_{\,\tau_2=t} =(-1)^{|\bm{k}|} ad_\mathcal{L}^{k_2}\big(\alpha_{n_2}\big)\ee^{t\mathcal{L}}ad_\mathcal{L}^{k_1}\big(\alpha_{n_1}\big),\\
		F^{\bm{k}}_{\bm{n}}[(0,0)](t)&=&\partial_{\tau_2}^{k_2}\left(\partial_{\tau_1}^{k_1}\left(\ee^{(t-\tau_2)\mathcal{L}}\alpha_{n_2}\ee^{(\tau_2-\tau_1)\mathcal{L}}\alpha_{n_1}\ee^{\tau_1\mathcal{L}}\right)\rule[-4pt]{0.7pt}{17pt}_{\,\tau_1=0}\right)\rule[-4pt]{0.7pt}{17pt}_{\,\tau_2=0} =(-1)^{|\bm{k}|} \ee^{t\mathcal{L}}ad_\mathcal{L}^{k_2}\big(\alpha_{n_2}\big)ad_\mathcal{L}^{k_1}\big(\alpha_{n_1}\big).
	\end{eqnarray*}
\end{exam}

It can be shown that expression $F_{\bm{n}}^{\bm{k}}[\bm{\phi}]$, for operator $F_{\bm{n}}$  defined in (\ref{function_f_operator}), takes the following form 
\begin{eqnarray*}\label{alfa_L_sequence}
	F_{\bm{n}}^{\bm{k}}[\bm{\phi}](t) = \sum_{\bm{m}\leq \bm{k}}^{}{\bm{k}\choose \bm{m}}(-1)^{|\bm{m}|+|\bm{k}|}\mathcal{L}^{r_{d+1}} \alpha_{n_d}\mathcal{L}^{r_{d}}\alpha_{n_{d-1}}\dots \mathcal{L}^{r_{\ell+1}}\alpha_{n_{\ell+1}}\ee^{t\mathcal{L}}\mathcal{L}^{r_{\ell}}\alpha_{n_{\ell}}\dots \mathcal{L}^{r_2}\alpha_{n_1}\mathcal{L}^{r_1},
\end{eqnarray*}
for certain  vector $\bm{\phi}\in \Phi^d_\ell$, sufficiently smooth functions $\alpha_{n}, \ n=1,\dots,N$, multi-indexes $\bm{k}=(k_1,\dots.k_d), \ \bm{m}=(m_1,\dots,m_d)$ and ${\bm{k}\choose \bm{m}} = \frac{\bm{k}!}{\bm{m}!(\bm{k}-\bm{m})!}$. 
Numbers $r_1,\dots r_{d+1}$ satisfy $r_1+\dots+r_{d+1}=|\bm{k}|$ and $\bm{m}\leq \bm{k}$ means $m_i\leq k_i$ for $i=1,2,\dots ,d$.

\section{Asymptotic expansion of a highly oscillatory integral subject to nonresonance  condition}\label{sec4}
Let $\bm{n}=(n_1,\dots,n_d) \in \mathbb{N}^d$. In this section, we assume that the coordinates of $\bm{n}$ satisfy the nonresonance condition:
\begin{eqnarray}\label{sum_nonorthogonal_condition}
n_j + n_{j+1} + \dots + n_{r-1} + n_r \neq 0,
\end{eqnarray}
for each $1\leq j\leq r \leq d$. This condition implies that $\bm{n}$ is not orthogonal to the faces of the simplex $\sigma_d(t)$.
This case includes the situation where the highly oscillatory potential $f$ takes the form (\ref{ideal_function_f}).
This section shows that the integral $I[F_{\bm{n}},\sigma_d(t)]$ defined in (\ref{I[F,S_d(t)]}) can be expressed as a partial sum of the asymptotic series.
Let us start with the first term of the Neumann series. By using integration by parts, the integral $[F_{n_1},(0,t)]$ can be presented as the following sum:
\begin{equation}\label{IA0B}
\begin{split}
&\int_{0}^{t}\ee^{\ii\omega\tau n_1}\ee^{(t-\tau)\mathcal{L}}\alpha_{n_1}\ee^{\tau\mathcal{L}}u_0\D\tau =  \sum_{k=0}^{r-1}\frac{1}{(\ii  \omega n_1)^{k+1}}\Big[\ee^{\ii\omega t n_1} ad_\mathcal{L}^k\big(\alpha_{n_1}\big)\ee^{t\mathcal{L}}-\ee^{t\mathcal{L}}ad_\mathcal{L}^k\big(\alpha_{n_1}\big)\Big]u_0 \\ 
&\hspace{5cm}+\frac{1}{(\ii\omega n_1)^r}\int_{0}^{t}\ee^{\ii\omega\tau n_1}\ee^{(t-\tau)\mathcal{L}}ad_\mathcal{L}^r\big(\alpha_{n_1}\big)\ee^{\tau\mathcal{L}}u_0\D\tau,
\end{split}
\end{equation}
and the above partial sum  approximates the integral with error $\mathcal{O}(\omega^{-r-1})$.
The general formula for the integral $I[F_{\bm{n}},\sigma_d(t)]$ over a $d$-dimensional simplex is unfortunately much more complicated. However, equation (\ref{IA0B}) shows the main idea of our considerations. In the later derivation, to expand asymptotically multivariate highly oscillatory integral of type (3.2), we will use simple tools, including induction, Fubini's theorem and identity (\ref{IA0B}).

The following definition determines  coefficients that will appear as a result of integrating by parts.
\begin{defi}\label{numbers_A}
	For vector $ \Phi^d_\ell \ni \bm{\phi}= (\phi_1,\dots,\phi_d) = (\phi_1,\dots,\phi_{\ell-1},0,1,\dots,1)$ (i.e. the last zero on $\ell$-th coordinate, next coordinated are ones only), multi-index $\bm{k}=(k_1,\dots,k_{d})$,  and vector $\bm{n}=(n_1,\dots,n_d) \in\mathbb{N}^d$ satisfying condition (\ref{sum_nonorthogonal_condition}),  we introduce the following rational numbers  
	\small
	\begin{eqnarray}\label{expressions_A}
	&&A_{\bm{{k}}}[\bm{\phi}](\bm{n})=\\
	&&\frac{(-1)^{\phi_1+1}}{n_1^{k_1+1}} \frac{(-1)^{\phi_2+1}}{(n_1\phi_1+n_2)^{k_2+1}} \frac{(-1)^{\phi_3+1}}{\left((n_1\phi_1+n_2)\phi_2+n_3\right)^{k_3+1}}\dots \frac{(-1)^{\phi_d+1}}{\left(\dots\left((n_1\phi_1+n_2)\phi_2+n_3\right)\phi_3+\dots+n_d\right)^{k_d+1}} \nonumber
	\end{eqnarray}
	\normalsize
	Let us note that due to assumption  (\ref{sum_nonorthogonal_condition}),  we never divide  by zero in the above expressions.
	\end{defi}
\noindent One can observe that the numbers  $A_{\bm{{k}}}[\bm{\phi}](\bm{n})$ satisfies the following recursive relation
	\begin{eqnarray*}
	&&A_{k_1}[\phi_1](n_1) = \frac{(-1)^{\phi_1+1}}{n_1^{k_1+1}},\\
	&&A_{\bm{k}^{}}[(\bm{\phi},\phi_{d+1})]((\bm{n},n_{d+1}))=A_{\bm{{\tilde{k}}}}[\bm{\phi}](\bm{n}) \frac{(-1)^{\phi_{d+1}+1}}{[(\bm{v}_{\ell}^d,1)\cdot (\bm{n},n_{d+1})]^{k_{d+1}+1}},
\end{eqnarray*}
where  $\bm{\phi}\in \Phi^d_\ell$, (the last zero is at the $\ell$-th coordinate, and at coordinates $\ell+1,...,d$ there are ones), vertex $\bm{v}_\ell^d = (0,\dots,0,1,\dots,1)$ (the last zero also is at the $\ell$-th coordinate) of simplex $\sigma_d(1)$, multi-indexes $\bm{k}=(k_1,\dots,k_{d+1})$, $\bm{\tilde{k}}=(k_1,\dots,k_{d})$ and vector $\bm{n}=(n_1,\dots,n_d) \in\mathbb{N}^d$ satisfying condition (\ref{sum_nonorthogonal_condition}).
	Recursive definition will be needed later in the proof of Theorem \ref{theorem_asymptotic_non_resonances}. 

\begin{exam}
	For  $d=3$  we compute  coefficients $A_{\bm{{k}}}[\bm{\phi}](\bm{n})$ for different $\phi \in \Phi_\ell^3, \ \ell=0,1,2,3$.
	\begin{eqnarray*}
		&&\ell=0, \ A_{\bm{{k}}}[(1,1,1)](\bm{n})=   \frac{1}{n_1^{k_1+1}(n_1+n_2)^{k_2+1} (n_1+n_2+n_3)^{k_3+1}},\\
		&&\ell=1, \ A_{\bm{{k}}}[(0,1,1)](\bm{n})=  \frac{-1}{n_1^{k_1+1} (n_2)^{k_2+1} (n_2+n_3)^{k_3+1}},\\
		&&\ell=2, \  A_{\bm{{k}}}[(0,0,1)](\bm{n})= \frac{1}{n_1^{k_1+1}  n_2^{k_2+1} n_3^{k_3+1}},\\
		&&\ell=2, \  A_{\bm{{k}}}[(1,0,1)](\bm{n})=  \frac{-1}{n_1^{k_1+1} (n_1+n_2)^{k_2+1}n_3^{k_3+1}},\\
		&&\ell=3, \  A_{\bm{{k}}}[(0,0,0)](\bm{n})= \frac{-1}{n_1^{k_1+1} n_2^{k_2+1} n_3^{k_3+1}},\\
		&&\ell=3, \  A_{\bm{{k}}}[(1,1,0)](\bm{n})= \frac{-1}{n_1^{k_1+1} (n_1+n_2)^{k_2+1} (n_1+n_2+n_3)^{k_3+1}},\\
		&&\ell=3, \  A_{\bm{{k}}}[(0,1,0)](\bm{n})= \frac{1}{n_1^{k_1+1} n_2^{k_2+1} (n_2+n_3)^{k_3+1}},\\
		&&\ell=3, \  A_{\bm{{k}}}[(1,0,0)](\bm{n})=  \frac{1}{n_1^{k_1+1} (n_1+n_2)^{k_2+1} n_3^{k_3+1}}.
	\end{eqnarray*}
\end{exam}
\begin{remark}
	In the subsequent part of the paper, we employ the following notation: if $\bm{n}=(n_1,\dots,n_m)\in \mathbb{Z}^m$ is an $m$-dimensional vector, then $\bm{\tilde{n}}=(n_1,n_2,\dots,n_{m-1})\in \mathbb{Z}^{m-1}$ represents a $(m-1)$-dimensional vector obtained by excluding the last coordinate $n_m$ from $\bm{n}$.
\end{remark}
To simplify  complex  notation in the proof Theorem \ref{theorem_asymptotic_non_resonances}, we will also write $F:=F_{\bm{n}}$ for the operator defined in (\ref{function_f_operator}), if vector $\bm{n}$  corresponding  to $F$ is clear from the context. 
\begin{theorem}\label{theorem_asymptotic_non_resonances}
	Suppose that Assumption 1 is satisfied. In addition, suppose that $u_0, \ \alpha_n \in H^{2p(r+1)}(\Omega), \ n=1,\dots, N$. Let $F$ be the operator defined in (\ref{function_f_operator}),  and let $\bm{n}$ be the vector corresponding to $F$ that satisfies the nonresonance condition (\ref{sum_nonorthogonal_condition}). Integral (\ref{I[F,S_d(t)]}) can be expressed as the $r$-partial sum  $\mathcal{S}_r^{(d)}(t)$  of the asymptotic series with error  $E_r^{(d)}(t)$
	\begin{eqnarray}\label{series+error}
	I[F,\sigma_d(t)]=	\int_{\sigma_d(t)}F(t,\bm{\tau})\ee^{\ii\omega \bm{n}^T\bm{\tau} }\D \bm{\tau} = \mathcal{S}^{(d)}_r(t)+E^{(d)}_r(t),
	\end{eqnarray}
	where
	\begin{equation}\label{series_S_expansion}
	\mathcal{S}^{(d)}_r(t) = \sum_{|\bm{k}|=0}^{r-d}\frac{(-1)^{|\bm{k}|}}{(\ii\omega)^{d+|\bm{k}|}}\sum_{\ell=0}^{d}\ee^{\ii\omega t \bm{n}^T\bm{v}_{\ell}^d}\sum_{\bm{\phi}\in \Phi^d_{\ell}}^{}A_{\bm{k}}[\bm{\phi}](\bm{n})F^{\bm{k}}[\bm{\phi}](t), \quad r\geq d,
	\end{equation}
	and error $E_r^{(d)}(t)$ of the expansion is in recursive form
	\footnotesize
	\begin{eqnarray}\label{error_expansion}
	E^{(1)}_r(t)&=&\frac{(-1)^r}{(\ii\omega)^r}\frac{1}{n_1^{r}}\int_{0}^{t}\ee^{\ii\omega\tau_1 n_1}\partial_{\tau_1}^{r}F(t,\tau_1)\D\tau_1,\nonumber\\
	E_r^{(d)}(t)&=&\frac{(-1)^{r-d+1}}{(\ii\omega)^r}\sum_{|\bm{k}|=r-d+1}^{}\sum_{\ell=0}^{d-1}\sum_{\bm{\phi}\in \nonumber \Phi^{d-1}_{\ell}}\frac{1}{\left(\bm{n}^T\bm{v}^d_{\ell}\right)^{k_d}}A_{\bm{\tilde{k}}}[\bm{\phi}](\bm{\tilde{n}}) \int_{0}^{t}\ee^{\ii\omega \tau_d \bm{n}^T\bm{v}_{\ell}^d}\partial_{\tau_d}^{k_d}\left(\ee^{(t-\tau_d)\mathcal{L}}\alpha_{n_d}F^{\bm{\tilde{k}}}[\bm{\phi}](\tau_d)	\right)\D\tau_d\\
	&+&\int_{0}^{t}\ee^{(t-\tau_d)\mathcal{L}}\alpha_{n_d}E_r^{(d-1)}(\tau_d)\ee^{\ii\omega n_d\tau_d}\D\tau_d, \ \text{for} \ d\geq 2. 
	\end{eqnarray}
\end{theorem}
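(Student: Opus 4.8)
The argument is by induction on the dimension $d$ of the simplex $\sigma_d(t)$, with the one–variable integration–by–parts identity (\ref{IA0B}) as the engine at every level and Fubini's theorem used to peel off the outermost variable. Everything takes place in $D(\mathcal{L})$: the hypothesis $u_0,\alpha_n\in H^{2p(r+1)}(\Omega)$, the algebra estimate (\ref{Sobolev_estimation}) and the boundedness of $\ee^{t\mathcal{L}}$ in $\|\,\cdot\,\|$ ensure that every operator $F^{\bm{k}}_{\bm{n}}[\bm{\phi}](t)$ that occurs (each involving at most $|\bm{k}|\le r$ powers of $\mathcal{L}$ applied to products of the $\alpha_n$) is well defined when applied to $u_0$. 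For $d=1$ one integrates $\int_0^t F(t,\tau_1)\ee^{\ii\omega n_1\tau_1}\D\tau_1$ by parts $r$ times (legitimate since $n_1\neq0$): this yields boundary terms for $k_1=0,\dots,r-1$ and a remainder carrying $\partial_{\tau_1}^r F$, exactly as in (\ref{IA0B}). With the vertices $\bm{v}^1_0=(1)$, $\bm{v}^1_1=(0)$ and $\Phi^1_0=\{(1)\}$, $\Phi^1_1=\{(0)\}$ one has $A_{k_1}[(1)](n_1)=n_1^{-(k_1+1)}$ and $A_{k_1}[(0)](n_1)=-n_1^{-(k_1+1)}$ from Definition \ref{numbers_A}, while $F^{k_1}[(1)](t)=\partial_{\tau_1}^{k_1}F|_{\tau_1=t}$ and $F^{k_1}[(0)](t)=\partial_{\tau_1}^{k_1}F|_{\tau_1=0}$ by Definition \ref{sequence_partial_derivative}; collecting the powers of $\ii\omega n_1$ identifies the output with $\mathcal{S}^{(1)}_r(t)+E^{(1)}_r(t)$.

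For the inductive step, assume the statement at dimension $d-1$ (with the same $r\ge d>d-1$). Writing $\bm{\tau}=(\bm{\tilde{\tau}},\tau_d)$, using the factorization $F_{\bm{n}}(t,\bm{\tau})=\ee^{(t-\tau_d)\mathcal{L}}\alpha_{n_d}F_{\bm{\tilde{n}}}(\tau_d,\bm{\tilde{\tau}})$ together with $\bm{n}^T\bm{\tau}=\bm{\tilde{n}}^T\bm{\tilde{\tau}}+n_d\tau_d$, Fubini's theorem gives
\[
I[F_{\bm{n}},\sigma_d(t)]=\int_0^t\ee^{(t-\tau_d)\mathcal{L}}\alpha_{n_d}\,I[F_{\bm{\tilde{n}}},\sigma_{d-1}(\tau_d)]\,\ee^{\ii\omega n_d\tau_d}\D\tau_d .
\]
Inserting $I[F_{\bm{\tilde{n}}},\sigma_{d-1}(\tau_d)]=\mathcal{S}^{(d-1)}_r(\tau_d)+E^{(d-1)}_r(\tau_d)$ from the inductive hypothesis, the $E^{(d-1)}_r$–summand is exactly the last term of (\ref{error_expansion}). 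In the $\mathcal{S}^{(d-1)}_r$–summand, fix a term of (\ref{series_S_expansion}) indexed by $\ell\in\{0,\dots,d-1\}$, $\bm{\phi}\in\Phi^{d-1}_\ell$ and $\bm{\tilde{k}}$. Its two exponentials combine into $\ee^{\ii\omega\tau_d\,\bm{n}^T\bm{v}^d_\ell}$, because $\bm{\tilde{n}}^T\bm{v}^{d-1}_\ell+n_d=n_{\ell+1}+\dots+n_d=\bm{n}^T\bm{v}^d_\ell$, and this integer is nonzero by the nonresonance condition (\ref{sum_nonorthogonal_condition}) (it is a consecutive partial sum). Hence one may integrate by parts in $\tau_d$; we do so $r-d+1-|\bm{\tilde{k}}|$ times when $|\bm{\tilde{k}}|\le r-d$, and not at all when $|\bm{\tilde{k}}|=r-d+1$.

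Two facts then let us read off the result. First, applying Definition \ref{sequence_partial_derivative} to the factorized $F_{\bm{n}}$, the derivatives in $\tau_1,\dots,\tau_{d-1}$ only hit the factor $F_{\bm{\tilde{n}}}$, so that $\partial_{\tau_d}^{k_d}\bigl(\ee^{(t-\tau_d)\mathcal{L}}\alpha_{n_d}F^{\bm{\tilde{k}}}_{\bm{\tilde{n}}}[\bm{\phi}](\tau_d)\bigr)\big|_{\tau_d=\psi t}=F^{(\bm{\tilde{k}},k_d)}_{\bm{n}}[(\bm{\phi},\psi)](t)$ for $\psi\in\{0,1\}$. Second, the recursion $A_{\bm{k}}[(\bm{\phi},\psi)](\bm{n})=A_{\bm{\tilde{k}}}[\bm{\phi}](\bm{\tilde{n}})\,(-1)^{\psi+1}\bigl(\bm{n}^T\bm{v}^d_\ell\bigr)^{-(k_d+1)}$ recorded just after Definition \ref{numbers_A} reproduces exactly the coefficients generated by the integration by parts at $\tau_d=t$ (there $\psi=1$, the vertex is $(\bm{\phi},1)\in\Phi^d_\ell$ and the frequency $\ee^{\ii\omega t\,\bm{n}^T\bm{v}^d_\ell}$) and at $\tau_d=0$ (there $\psi=0$, the vertex is $(\bm{\phi},0)\in\Phi^d_d$ and the frequency $1=\ee^{\ii\omega t\,\bm{n}^T\bm{v}^d_d}$), while the powers of $\ii\omega$ collapse to $(\ii\omega)^{-(d+|\bm{k}|)}$ with $\bm{k}=(\bm{\tilde{k}},k_d)$. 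Summing over all $\ell,\bm{\phi},\bm{\tilde{k}},k_d$ and using $\bigcup_{\ell=0}^{d-1}\{(\bm{\phi},1):\bm{\phi}\in\Phi^{d-1}_\ell\}=\bigcup_{\ell=0}^{d-1}\Phi^d_\ell$ and $\{(\bm{\phi},0):\bm{\phi}\in\{0,1\}^{d-1}\}=\Phi^d_d$, the boundary terms reassemble into $\mathcal{S}^{(d)}_r(t)$ — the index $\ell'$ in (\ref{series_S_expansion}) running over $0,\dots,d-1$ from the boundary at $\tau_d=t$ and equal to $d$ from the boundary at $\tau_d=0$, with $|\bm{k}|$ ranging exactly over $0,\dots,r-d$ — whereas all the remainder integrals (each of total order $|\bm{k}|=r-d+1$, be it the leftover of an integration by parts or a non–integrated $|\bm{\tilde{k}}|=r-d+1$ term) collect, after the same coefficient bookkeeping, into the first sum of (\ref{error_expansion}). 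This closes the induction.

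The analytic ingredients — Fubini and differentiation under the integral sign for Bochner integrals, closedness of $\mathcal{L}$, and the regularity that makes every $\mathcal{L}^k\alpha_n$ and every $F^{\bm{k}}_{\bm{n}}[\bm{\phi}](t)u_0$ meaningful — are routine and are checked along the way. The genuinely delicate part, and the main obstacle, is the combinatorial reindexing in the reassembling step: one must verify that the four running indices $(\ell,\bm{\phi},\bm{\tilde{k}},k_d)$ coming from the inductive hypothesis refined by one integration by parts match, term by term, the indices $(\ell',\bm{\phi}',\bm{k})$ of $\mathcal{S}^{(d)}_r$ and of $E^{(d)}_r$, together with the precise cut–off $|\bm{k}|\le r-d$ for the asymptotic sum versus $|\bm{k}|=r-d+1$ for the error. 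The factorization identity for $F^{\bm{k}}_{\bm{n}}[\bm{\phi}]$ and the recursion for $A_{\bm{k}}[\bm{\phi}]$ are precisely the two facts that make this bookkeeping go through.
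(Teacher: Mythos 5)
Your proposal is correct and follows essentially the same route as the paper's proof: induction on $d$ with the $r$-fold integration by parts for $d=1$, then Fubini to peel off $\tau_d$, insertion of the inductive hypothesis, combination of the exponentials via $\bm{\tilde{n}}^T\bm{v}^{d-1}_\ell+n_d=\bm{n}^T\bm{v}^d_\ell\neq0$, integration by parts $r-d+1-|\bm{\tilde{k}}|$ times, and the same reassembly using the recursion for $A_{\bm{k}}[\bm{\phi}]$, Definition \ref{sequence_partial_derivative}, and the summation reindexing identities. The bookkeeping details you flag as delicate (the $\Phi^d_\ell$ unions for the $\tau_d=t$ and $\tau_d=0$ boundary terms and the $|\bm{k}|\le r-d$ versus $|\bm{k}|=r-d+1$ cut-off) are exactly the ones the paper verifies, and you verify them correctly.
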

\normalsize
\begin{proof}
	We show the statement by induction on $d$.
	For $d=1$ we integrate by parts $r$ times integral $I[F,(0,t)]$ and thus we obtain
	\begin{eqnarray*}
		I[F,(0,t)]&=&\int_0^t F(t,\tau_1)\ee^{\ii\omega n_1\tau_1}\D\tau_1\\
		&=&\sum_{k_1=0}^{n-1}\frac{(-1)^{k_1}}{(\ii\omega)^{1+k_1}}\left(\ee^{\ii\omega n_1}\frac{1}{n_1^{k_1+1}}\partial_{\tau_1}^{k_1}F(t,\tau_1)\rule[-4pt]{0.55pt}{17pt}_{\,\tau_1= t}+\frac{-1}{n_1^{k_1+1}}\partial_{\tau_1}^{k_1}F(t,\tau_1)\rule[-4pt]{0.55pt}{17pt}_{\,\tau_1= 0}\right) \\
		&+&\frac{(-1)^r}{(\ii\omega)^r}\frac{1}{n_1^{r}}\int_{0}^{t}\ee^{\ii\omega\tau_1 n_1}\partial_{\tau_1}^{r}F(t,\tau_1)\D\tau_1\\
		&=&
		\sum_{k_1=0}^{r-1}\frac{(-1)^{k_1}}{(\ii\omega)^{1+k_1}}\sum_{\ell=0}^{1}\ee^{\ii\omega v_{\ell}^1n_1}\sum_{\phi \in \Phi^1_{\ell}}A_{k_1}[\phi](n_1)F^{k_1}[\phi](t)+ E_r^{(1)}(t) \\
		&=& \mathcal{S}_r^{(1)}(t)+E_r^{(1)}(t).
	\end{eqnarray*}

	Let $\bm{\tilde{n}}=(n_1,\dots,n_{d-1}), \ \bm{\tilde{k}}=(k_1,\dots,k_{d-1})\in \mathbb{N}^{d-1}$ and $\bm{n}=(n_1,\dots,n_{d}), \ \bm{k}=(k_1,\dots,k_d)\in \mathbb{N}^d$.
	Suppose now the formula (\ref{series+error}) is true for $I[F,\sigma_{d-1}(t)]=\int_{\sigma_{d-1}(t)}F(\bm{\tau})\ee^{\ii\omega \bm{\tilde{n}}^T\bm{\tau}}\D \bm{\tau}$. Then
	\footnotesize
	\begin{eqnarray*}
		&&\int_{S_{d}(t)}F(t,\bm{\tau})\ee^{\ii\omega \bm{n}^T\bm{\tau} }\D \bm{\tau} \stackrel{(1)}{=}\\
		&&\int_{0}^{t}\ee^{(t-\tau_d)\mathcal{L}}\alpha_{n_d}I[F,\sigma_{d-1}(\tau_{d})]\ee^{\ii\omega\tau_{d}n_{d}}\D\tau_{d} \stackrel{(2)}{=} \int_{0}^{t}\ee^{(t-\tau_d)\mathcal{L}}\alpha_{n_d}\left(\mathcal{S}_r^{(d-1)}(\tau_{d})+E_r^{(d-1)}(\tau_{d})\right)\ee^{\ii\omega\tau_{d}n_{d}}\D\tau_{d}\stackrel{(3)}{=}\\
		&&\sum_{|\bm{\tilde{k}}|=0}^{r-d+1}\frac{(-1)^{|\bm{\tilde{k}}|}}{(\ii\omega)^{d-1+|\bm{\tilde{k}}|}}\sum_{\ell=0}^{d-1}\sum_{\bm{\phi}\in \Phi^{d-1}_{\ell}}^{}A_{\bm{\tilde{k}}}[\bm{\phi}](\bm{\tilde{n}})\int_{0}^{t}\ee^{(t-\tau_d)\mathcal{L}}\alpha_{n_d}F^{\bm{\tilde{k}}}[\bm{\phi}](\tau_{d})\ee^{\ii\omega \tau_{d} \bm{n}^T\bm{v}_{\ell}^{d}}\D\tau_{d}+\\
		&& \int_{0}^{t}\ee^{(t-\tau_d)\mathcal{L}}\alpha_{n_d}E_r^{(d-1)}(\tau_{d})\ee^{\ii\omega\tau_d n_d}\D\tau_{d}\stackrel{(4)}{=}\\
		&&\sum_{|\bm{\tilde{k}}|=0}^{r-d+1}\frac{(-1)^{|\bm{\tilde{k}}|}}{(\ii\omega)^{d-1+|\bm{\tilde{k}}|}}\sum_{\ell=0}^{d-1}\sum_{\bm{\phi}\in \Phi^{d-1}_{\ell}}^{}A_{\bm{\tilde{k}}}[\bm{\phi}](\bm{\tilde{n}})\Bigg\{\sum_{k_{d}=0}^{r-d-|\bm{\tilde{k}}|}\frac{(-1)^{k_d}}{(\ii\omega)^{k_d+1}}\frac{1}{(\bm{n}^T\bm{v}_{\ell}^d)^{k_d+1}}\cdot \\
		&&\hspace{0.5cm}\cdot\left(\ee^{\ii\omega t\bm{n}^T\bm{v}_{\ell}^d }\partial^{k_d}_{\tau_d}\left[\ee^{(t-\tau_d)\mathcal{L}}\alpha_{n_d}F^{\bm{\tilde{k}}}[\bm{\phi}](\tau_{d})\right]\,\rule[-8pt]{0.75pt}{20pt}_{\,\tau_d= t}-\partial_{\tau_d}^{k_d}\left[\ee^{(t-\tau_d)\mathcal{L}}\alpha_{n_d}F^{\bm{\tilde{k}}}[\bm{\phi}](\tau_{d})\right]\,\rule[-8pt]{0.75pt}{20pt}_{\,\tau_d=0}\right)+\\
		&&\frac{(-1)^{r-d-|\bm{\tilde{k}}|+1}}{(\ii\omega)^{r-d-|\bm{\tilde{k}}|+1}}\frac{1}{(\bm{n}^T\bm{v}_{\ell}^d)^{r-d-|\bm{\tilde{k}}|+1}} \int_{0}^{t}\partial_{\tau_d}^{r-d-|\bm{\tilde{k}}|+1}\left(\ee^{(t-\tau_d)\mathcal{L}}\alpha_{n_d}F^{\bm{\tilde{k}}}[\bm{\phi}](\tau_{d})\right)\ee^{\ii\omega \tau_{d} \bm{n}^T\bm{v}_{\ell}^{d}}\D\tau_{d}\Bigg\}+\\
		&&\int_{0}^{t}\ee^{(t-\tau_d)\mathcal{L}}\alpha_{n_d}E_r^{(d-1)}(\tau_{d})\ee^{\ii\omega\tau_dn_d}\D\tau_{d}\stackrel{(5)}{=}\\
		&&\sum_{|\bm{k}|=0}^{r-d}\frac{(-1)^{|\bm{k}|}}{(\ii\omega)^{d+|\bm{k}|}}\sum_{\ell=0}^{d-1}\sum_{\phi \in \Phi^{d-1}_{\ell}}A_{\bm{\tilde{k}}}[\bm{\phi}](\bm{\tilde{n}})\frac{1}{(\bm{n}^T\bm{v}_{\ell}^d)^{k_d+1}}\Bigg[\\
		&&\ee^{\ii\omega t\bm{n}^T\bm{v}_{\ell}^d}\partial_{\tau_d}^{k_d}\left[\ee^{(t-\tau_d)\mathcal{L}}\alpha_{n_d}F^{\bm{\tilde{k}}}[\bm{\phi}](\tau_{d})\right]\,\rule[-4pt]{0.75pt}{17pt}_{\,\tau_d=1\cdot t}-\ee^{\ii\omega t\bm{n}^T\bm{v}_d^d}\partial_{\tau_d}^{k_d}\left[\ee^{(t-\tau_d)\mathcal{L}}\alpha_{n_d}F^{\bm{\tilde{k}}}[\bm{\phi}](\tau_{d})\right]\,\rule[-4pt]{0.75pt}{17pt}_{\,\tau_d=t\cdot 0}\Bigg]+\\
		&&\frac{(-1)^{r-d+1}}{(\ii\omega)^{r}}\sum_{|\bm{k}|=r-d+1}^{}\sum_{\ell=0}^{d-1}\sum_{\bm{\phi}\in \Phi^{d-1}_{\ell}}^{}A_{\bm{\tilde{k}}}[\bm{\phi}](\bm{\tilde{n}})\frac{1}{(\bm{n}^T\bm{v}_{\ell}^d)^{k_d}}\int_{0}^{t}\partial_{\tau_d}^{k_d}\left(\ee^{(t-\tau_d)\mathcal{L}}\alpha_{n_d}F^{\bm{\tilde{k}}}[\bm{\phi}](\tau_{d})\right)\ee^{\ii\omega \tau_{d} \bm{n}^T\bm{v}_{\ell}^{d}}\D\tau_{d}\\
		&&+\int_{0}^{t}\ee^{(t-\tau_d)\mathcal{L}}\alpha_{n_d}E_r^{(d-1)}(\tau_{d})\ee^{\ii\omega\tau_dn_d}\D\tau_{d}\stackrel{(6)}{=}\\
		&& \sum_{|\bm{k}|=0}^{r-d}\frac{(-1)^{|\bm{k}|}}{(\ii\omega)^{d+|\bm{k}|}}\sum_{\ell=0}^{d-1}\ee^{\ii\omega t \bm{n}^T\bm{v}_{\ell}^d}\sum_{\bm{\phi}\in \Phi^d_{\ell}}^{}A_{\bm{k}}[\bm{\phi}](\bm{n})F^{\bm{k}}[\bm{\phi}](t)+A_{\bm{k}}[(\phi_1,\dots,\phi_{d-1},0)](\bm{n})F^{\bm{k}}[(\phi_1,\dots,\phi_{d-1},0)](t)+E_r^{(d)}(t)=\\
		&& \sum_{|\bm{k}|=0}^{r-d}\frac{(-1)^{|\bm{k}|}}{(\ii\omega)^{d+|\bm{k}|}}\sum_{\ell=0}^{d}\ee^{\ii\omega t \bm{n}^T\bm{v}_{\ell}^d}\sum_{\bm{\phi}\in \Phi^d_{\ell}}^{}A_{\bm{k}}[\bm{\phi}](\bm{n})F^{\bm{k}}[\bm{\phi}](t)+E_r^{(d)}(t)
	\end{eqnarray*}
	\normalsize
	(we assume that $\sum_{k=0}^{-1}a_k=0$). Throughout the above inductive proof we utilize properties of integral $I[F, \sigma_d(t)]$, operator F, and simple summation identities. More precisely,  where necessary,  in the above identities we have used:
	\begin{enumerate}
		\item[(1)] form  (\ref{function_f_operator})  of function $F$ and Fubini's theorem,
		\item[(2)] induction hypothesis,
		\item[(3)] formula for asymptotic expansion of $I[F,\sigma_{d-1}(t)]$ and identity $\bm{\tilde{n}}^T\bm{v}_\ell^{d-1} +n_d= \bm{n}^T(\bm{v}^{d-1}_\ell,1)$,
		\item[(4)]  integration by parts $r-d-|\bm{\tilde{k}}|+1$ times  and the nonresonance condition (\ref{sum_nonorthogonal_condition}),
		\item[(5)] summation identities $\sum_{|\bm{\tilde{k}}|=0}^{r+1}\sum_{k_d=0}^{r-|\bm{\tilde{k}}|}a_{k_1,\dots,k_{d-1},k_d}=\sum_{|\bm{k}|=0}^{r}a_{k_1,\dots,k_d}$ and \\ $\sum_{|\bm{\tilde{k}}|=0}^{r-d+1}a_{k_1,\dots,k_{d-1},r-d-|\bm{\tilde{k}}|+1}=\sum_{|\bm{k}|=r-d+1}^{}a_{k_1,\dots,k_d}$,
		\item[(6)] Definition \ref{sequence_partial_derivative}, according to which  $\partial_{\tau_d}\left[\ee^{(t-\tau_d)\mathcal{L}}\alpha_{n_d}F^{\bm{\tilde{k}}}[\bm{\phi}](\tau_{d})\right]\,\rule[-4pt]{0.75pt}{17pt}_{\,\tau_d=\phi_d t} = F^{\bm{k}}[\bm{\phi}](t)$, and Definition \ref{numbers_A}.
	\end{enumerate}
We have proved that the integral $I[F,\sigma_d(t)] \sim \mathcal{O}(\omega^{-d})$ can indeed be approximated by the sum (\ref{series_S_expansion}), with an error  $\mathcal{O}(\omega^{-r-1})$ as given by the form (\ref{error_expansion}). 
\end{proof}
 Let us note that we cannot consider an infinite expansion in (\ref{series+error}) because $\mathcal{S}_r^{(d)}(t)$ may not converge to $I[F,\sigma_d(t)]$ as $r\rightarrow \infty$, even if $\omega \gg 1$.

A similar result was first obtained  in \cite{IN_2006}, where the authors provide the asymptotic expansion of a multivariate highly oscillatory integral over a regular simplex. 
However, our result differs in that the integrand is a linear operator, instead of a real-valued function, which makes the formulas for the asymptotic expansion of highly oscillatory integrals more complicated. Furthermore, in the later part of our considerations, the coefficients $A_{\bm{k}}[\bm{\phi}](\bm{n})$   and the error of the expansion must be explicitly derived. 

\section{Error analysis. First four terms of the Modulated Fourier expansion}\label{sec5}
In this section, we perform an  analysis of the error associated with approximating   the solution of equation (\ref{eq:1.5}) using the sum (\ref{partial_expansion}). Additionally, we provide ready-to-use formulas for the first four terms of the partial sum (\ref{partial_expansion}) of the asymptotic expansion.

By  Theorem {\ref{theorem_asymptotic_non_resonances}}, each integral $I[F_{\bm{n}},\sigma_d(t)] $, for vector $\bm{n}$ satisfying the nonresonance condition (\ref{sum_nonorthogonal_condition}), and for sufficiently smooth functions $\alpha_n, \ n=1,\dots, N$, can be expressed as a partial sum of the asymptotic expansion
$$I[F_{\bm{n}},\sigma_d(t)] =S_{r,\bm{n}}^{(d)}(t) + E_{r,\bm{n}}^{(d)}(t) ,$$
where $\mathcal{S}^{(d)}_{r,\bm{n}}(t) $ is the sum corresponding to operator $F_{\bm{n}}$ and
$S_{r,\bm{n}}^{(d)}(t) \sim \mathcal{O}(\omega^{-d})$.
Moreover, for the $r$-th partial sum of the Neumann series we have
$$	u^{[r]}(t)= \sum_{d=0}^{r}T^d\ee^{t\mathcal{L}}u_0=\ee^{t\mathcal{L}}u_0+\sum_{d=1}^{r}\sum_{\bm{n}\in \{1,\dots,N\}^d}I[F_{\bm{n}},\sigma_d(t)]u_0 .$$
Therefore, it is possible to approximate the solution $u(t)$ of equation (\ref{eq:1.5}) with a sum of type (\ref{partial_expansion}). Furthermore, since the Neumann series converges for any given time $t$, the asymptotic expansion is well-defined without requiring time steps.

We show that the sum consists of terms of $\mathcal{S}_{r,\bm{n}}^{(d)}(t) u_0$ which are defined in (\ref{series_S_expansion}) 
$$U_{as}^{[r]}(t) = \ee^{t\mathcal{L}}u_0+\sum_{d=1}^{r}\sum_{\bm{n}\in \{1,\dots,N\}^{d}}\mathcal{S}_{r,\bm{n}}^{(d)}(t) u_0.$$
approximates the solution  of (\ref{eq:1.5}) with error  $\mathcal{O}(\omega^{-r-1})$. We will  need the following lemmas
\begin{lemma}\label{lemma_bounded_kp}
Suppose that Assumption \ref{assumption1} is satisfied.	Let $k\geq 1$ and let  $D(\mathcal{L}^{k}):= H^{2pk}(\Omega)\cap H_0^p(\Omega)$ be the space with the Sobolev norm $\| \ \ \|_{H^{2pk}(\Omega)} $.  Then semigroup $\{\ee^{t\mathcal{L}}\}_{t\geq 0}$ is bounded in the space $D(\mathcal{L}^{k})$.
\end{lemma}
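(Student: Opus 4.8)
The plan is to reduce the statement for general $k$ to the already-established case $k=1$ (Lemma \ref{T^d_bounded} and the computation preceding it, together with the a priori estimate \eqref{apriori}), by an induction on $k$ combined with the standard elliptic regularity estimate. The key point is that for a strongly elliptic operator $\mathcal{L}$ of order $2p$ with smooth coefficients on $\bar\Omega$, one has the higher-order a priori (regularity) estimate
\begin{equation*}
\|u\|_{H^{2pk}(\Omega)} \leq C_k\left(\|\mathcal{L}^k u\|_{L^2(\Omega)} + \|u\|_{L^2(\Omega)}\right), \qquad u \in D(\mathcal{L}^k),
\end{equation*}
which is a consequence of iterating \eqref{apriori}: if $u\in D(\mathcal{L}^k)$ then $\mathcal{L}u\in D(\mathcal{L}^{k-1})$, so by the inductive hypothesis $\|\mathcal{L}u\|_{H^{2p(k-1)}}\leq C_{k-1}(\|\mathcal{L}^k u\|_{L^2}+\|\mathcal{L}u\|_{L^2})$, and then one applies \eqref{apriori} once more to control $\|u\|_{H^{2pk}}$ by $\|\mathcal{L}u\|_{H^{2p(k-1)}}+\|u\|_{L^2}$, finally absorbing the lower-order term $\|\mathcal{L}u\|_{L^2}$ using interpolation or again \eqref{apriori}. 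I would cite \cite{agmon,PAZY,EVANS} for this.

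Granting that estimate, the boundedness of $\ee^{t\mathcal{L}}$ on $D(\mathcal{L}^k)$ follows exactly as in the displayed computation after \eqref{apriori}: for $\|u\|_{H^{2pk}}\leq 1$,
\begin{align*}
\|\ee^{t\mathcal{L}}u\|_{H^{2pk}(\Omega)} &\leq C_k\left(\|\mathcal{L}^k\ee^{t\mathcal{L}}u\|_{L^2(\Omega)} + \|\ee^{t\mathcal{L}}u\|_{L^2(\Omega)}\right) = C_k\left(\|\ee^{t\mathcal{L}}\mathcal{L}^k u\|_{L^2(\Omega)} + \|\ee^{t\mathcal{L}}u\|_{L^2(\Omega)}\right)\\
&\leq C_k\,\|\ee^{t\mathcal{L}}\|_{L^2(\Omega)\leftarrow L^2(\Omega)}\left(\|\mathcal{L}^k u\|_{L^2(\Omega)} + \|u\|_{L^2(\Omega)}\right) \leq C_k\,C(t^\star)\left(\|\mathcal{L}^k u\|_{L^2(\Omega)} + \|u\|_{L^2(\Omega)}\right).
\end{align*}
Here I use that $\ee^{t\mathcal{L}}$ commutes with $\mathcal{L}^k$ on $D(\mathcal{L}^k)$ (a standard semigroup fact, since $\mathcal{L}$ generates the semigroup and powers of the generator commute with the semigroup on their domains), and that $\ee^{t\mathcal{L}}$ is bounded on $L^2(\Omega)$ uniformly for $t\in[0,t^\star]$ by $C(t^\star)$, as already noted in Section \ref{sec2}. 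It remains to bound $\|\mathcal{L}^k u\|_{L^2(\Omega)}+\|u\|_{L^2(\Omega)}$ by a constant multiple of $\|u\|_{H^{2pk}(\Omega)}=1$; this is immediate since $\mathcal{L}^k$ is a differential operator of order $2pk$ with bounded coefficients on $\bar\Omega$, so $\|\mathcal{L}^k u\|_{L^2}\leq C_1'\|u\|_{H^{2pk}}$ and $\|u\|_{L^2}\leq\|u\|_{H^{2pk}}$. Taking the supremum over such $u$ and then the maximum over $t\in[0,t^\star]$ gives $\max_{t\in[0,t^\star]}\|\ee^{t\mathcal{L}}\|_{D(\mathcal{L}^k)\leftarrow D(\mathcal{L}^k)}<\infty$.

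The main obstacle is the higher-order a priori estimate itself: the estimate \eqref{apriori} is stated in the paper only for $k=1$, and upgrading it requires either invoking the Agmon--Douglis--Nirenberg elliptic regularity theory directly (which needs the smoothness of $\partial\Omega$ and of the coefficients, both granted by Assumption \ref{assumption1}), or carefully iterating \eqref{apriori} while handling the intermediate-order terms $\|\mathcal{L}^j u\|_{L^2}$, $0<j<k$, for instance by an interpolation inequality $\|\mathcal{L}^j u\|_{L^2}\leq \varepsilon\|\mathcal{L}^k u\|_{L^2}+C_\varepsilon\|u\|_{L^2}$ valid on $D(\mathcal{L}^k)$. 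A secondary technical point worth stating explicitly is that $D(\mathcal{L}^k)$ as defined coincides, as a set, with $H^{2pk}(\Omega)\cap H_0^p(\Omega)$ and that the graph norm of $\mathcal{L}^k$ is equivalent to $\|\cdot\|_{H^{2pk}(\Omega)}$ on it — again a consequence of the elliptic estimate — so that the two natural norms under discussion are interchangeable. Once these regularity facts are in place, the proof is the short commutation-and-estimate argument displayed above, and I would present it in essentially that form, noting the induction on $k$ and citing \cite{agmon,PAZY} for the regularity input.
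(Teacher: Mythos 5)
Your proposal is correct and follows essentially the same route as the paper: the paper also combines the higher-order elliptic a priori estimate (citing Agmon, Theorem 9.8, in the form $\|u\|_{2p+k}\leq C(\|\mathcal{L}u\|_{k}+\|u\|_{L^2})$) with the commutation $\mathcal{L}^k\ee^{t\mathcal{L}}u=\ee^{t\mathcal{L}}\mathcal{L}^k u$ and the uniform $L^2$-boundedness of the semigroup on $[0,t^\star]$, the only difference being that the paper iterates the elliptic estimate step by step inside the chain of inequalities rather than first packaging it as a single estimate $\|u\|_{H^{2pk}}\leq C_k(\|\mathcal{L}^k u\|_{L^2}+\|u\|_{L^2})$.
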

\begin{proof}
We denote $\| \ \ \|_{H^n(\Omega)} =: \| \ \ \|_{n} $.
	To show the boundedness of the semigroup $\{\ee^{t\mathcal{L}}\}_{t\geq 0}$,  with respect to norm $\| \ \ \|_{2pk}$,
	we use $k$ times the following estimate  (see \cite{agmon}, Theorem 9.8)
	\begin{equation*}\label{apriori2}
	\|u\|_{2p+k}\leq C \left(\|\mathcal{L}u\|_{k}+\|u\|_{L^2(\Omega)}\right),
	\end{equation*}
	for   a strongly elliptic differential operator $\mathcal{L}$ of order $2p$ with smooth and bounded coefficients $a_{\bm{p}}$, 
	function $u \in H^{2p+k}(\Omega)\cap H_0^p(\Omega) $,  and  open,  bounded $\Omega \subset \mathbb{R}^m$ with smooth boundary.
	We have
	\begin{eqnarray*}
		\|\ee^{t\mathcal{L}}\|_{D(\mathcal{L}^{k})\leftarrow D(\mathcal{L}^{k})}=\sup_{\|u\|_{2pk}\leq 1}\|\ee^{t\mathcal{L}}u\|_{2pk}&\leq& C_1 \sup_{\|u\|_{2pk}\leq 1}\left(\|\mathcal{L}\ee^{t\mathcal{L}}u\|_{2p(k-1)}
		+\|\ee^{t\mathcal{L}}u\|_{L^{2}(\Omega)}\right)\\
		&\leq& C_1 \sup_{\|u\|_{2pk}\leq 1}\left(\|\mathcal{L}\ee^{t\mathcal{L}}u\|_{2p(k-1)}
		+C_2\right)\\
		&\leq& C_3 \sup_{\|u\|_{2pk}\leq 1}\left(\|\mathcal{L}^2\ee^{t\mathcal{L}}u\|_{2p(k-2)}
		+\|\mathcal{L}\ee^{t\mathcal{L}}u \|_{L^2(\Omega)}+ C_4\right)\\
		&\leq& C_5 \sup_{\|u\|_{2pk}\leq 1}\left(\|\mathcal{L}^2\ee^{t\mathcal{L}}u\|_{2p(k-2)}
		+ C_6\right)\\
		&\vdots& \\
		&\leq& C_{m-1} \sup_{\|u\|_{2pk}\leq 1}\left(\|\mathcal{L}^k\ee^{t\mathcal{L}}u\|_{L^2(\Omega)}
		+ C_m\right)\\
		&\leq& C_{m+1} \sup_{\|u\|_{2pk}\leq 1}\left(\|\ee^{t\mathcal{L}}\mathcal{L}^ku\|_{L^2(\Omega)}
		+ C_{m+2}\right) \leq C(t^\star).
	\end{eqnarray*}
The constants $C_1,C_2,\dots$ depend only on $p$, $\Omega$, $t^\star$ and $\mathcal{L}$. We obtain that the operator $\ee^{t\mathcal{L}}: D(\mathcal{L}^k) \rightarrow D(\mathcal{L}^k)$ is bounded in Sobolev norm $\| \ \ \|_{H^{2pk}(\Omega)}$ by a constant $C(t^\star)$ independent of $t$.
\end{proof}
\begin{lemma}\label{lemma_estimations}
	Suppose that Assumption 1 is satisfied. Further, suppose that $u_0, \ \alpha_n \in H^{2p(r+1)}(\Omega),$ for $ n=1,\dots, N$.   Let   	$I[F_{\bm{n}},\sigma_d(t)]$  be defined by (\ref{series+error}), and let $\bm{n}=(n_1,\dots,n_d)$ be the vector satisfying the nonresonance condition  (\ref{sum_nonorthogonal_condition}). Let  $\mathcal{S}^{(d)}_{r,\bm{n}}(t)$  be the asymptotic expansion (\ref{series_S_expansion})  of the integral  	$I[F,\sigma_d(t)]$, and let    $E^{(d)}_{r,\bm{n}}(t)$ be the error (\ref{error_expansion}) of this expansion. We denote $\| \ \ \|_{H^n(\Omega)} =: \| \ \ \|_{n} $ and $\| \ \ \|_{L^2(\Omega)}=:\| \ \ \|_{L^2}$ . Then there exists a constant $C:=C(\mathcal{L},t^\star, \alpha,u_0)$, independent of $\omega$ and $t$, such that
	\begin{equation} \label{oszacowanie_S}
	\|\mathcal{S}^{(d)}_{r,\bm{n}}(t)u_0\|_{L^2} \leq C \omega^{-d}
	\end{equation}
	\begin{equation}\label{oszacowanie_E}
	 \|E^{(d)}_{r,\bm{n}}(t)u_0\|_{L^2} \leq C \omega^{-r-1}.
	\end{equation}
\end{lemma}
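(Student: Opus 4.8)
The plan is to reduce both estimates to one uniform bound on the \emph{operator words} produced by the structure formula for $F^{\bm{k}}_{\bm{n}}[\bm{\phi}]$ recorded at the end of Section \ref{sec3}, namely expressions of the shape $\mathcal{L}^{a_{d+1}}\alpha_{n_d}\mathcal{L}^{a_d}\alpha_{n_{d-1}}\cdots\alpha_{n_{\ell+1}}\,\ee^{s\mathcal{L}}\,\alpha_{n_\ell}\cdots\alpha_{n_1}\mathcal{L}^{a_1}$ with $a_1+\dots+a_{d+1}\le r+1$, applied to $u_0$. The claim I would isolate is that every such word sends $u_0$ into $L^2(\Omega)$ with a norm bounded by a constant depending only on $\mathcal{L}$, $t^\star$, the $\alpha_n$ and $u_0$, uniformly in $s,t\in[0,t^\star]$ and independent of $\omega$. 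One proves this reading the word from the inside out: $\mathcal{L}^{a_1}u_0\in H^{2p(r+1-a_1)}(\Omega)$ because $\mathcal{L}\colon H^{\sigma}(\Omega)\to H^{\sigma-2p}(\Omega)$ is bounded and $u_0\in H^{2p(r+1)}(\Omega)$; multiplication by $\alpha_{n_1}$ keeps us in $H^{2p(r+1-a_1)}(\Omega)$ by the Banach--algebra property of $H^{\sigma}(\Omega)$ for $\sigma>m/2$ — the general form of (\ref{Sobolev_estimation}) — since $2p>m/2$ makes every intermediate exponent that is $\ge 2p$ exceed $m/2$; the semigroup factor $\ee^{s\mathcal{L}}$ is bounded on $D(\mathcal{L}^k)=H^{2pk}(\Omega)\cap H_0^p(\Omega)$ uniformly for $s\in[0,t^\star]$ by Lemma \ref{lemma_bounded_kp} (and on $L^2(\Omega)$ when $k=0$); and one iterates, the only delicate point being the ``corner'' configuration in which the running exponent hits $0$, whence all remaining $\mathcal{L}$-powers vanish and the remaining factors act merely as multiplication by $\alpha_n\in L^{\infty}(\Omega)$ (Sobolev embedding $H^{2p(r+1)}\hookrightarrow L^{\infty}$) on an $L^2$ function. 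In every configuration the word lands in $L^2(\Omega)$ precisely because the total order of $\mathcal{L}$ never exceeds $r+1$.

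Granting this, (\ref{oszacowanie_S}) would be immediate. Substituting the structure formula into (\ref{series_S_expansion}) writes $\mathcal{S}^{(d)}_{r,\bm{n}}(t)u_0$ as a finite sum — the number of summands depending only on $r,d,N$ — of terms $\omega^{-(d+|\bm{k}|)}$ times a bounded rational coefficient $A_{\bm{k}}[\bm{\phi}](\bm{n})$ (bounded because the nonresonance condition (\ref{sum_nonorthogonal_condition}) keeps all denominators in Definition \ref{numbers_A} away from $0$ and only finitely many $\bm{n}$ occur) times an operator word of total $\mathcal{L}$-order $|\bm{k}|\le r-d\le r$ applied to $u_0$, hence bounded in $L^2(\Omega)$ by the claim. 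Since $d+|\bm{k}|\ge d$ and $\omega\ge 1$, the higher-order terms are absorbed into the constant and $\|\mathcal{S}^{(d)}_{r,\bm{n}}(t)u_0\|_{L^2}\le C\omega^{-d}$.

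For (\ref{oszacowanie_E}) I would induct on $d$ using the recursive form (\ref{error_expansion}); in each occurrence of $E^{(d)}_r$ the visible prefactor is $(\ii\omega)^{-r}$, so it suffices to extract one further $\omega^{-1}$. For $d=1$, in $\int_0^t\ee^{\ii\omega\tau_1 n_1}\partial_{\tau_1}^{r}F(t,\tau_1)u_0\,\D\tau_1$ one integrates by parts once more: $n_1\neq 0$, and $\tau_1\mapsto\partial_{\tau_1}^{r}F(t,\tau_1)u_0$ is $C^1$ as an $L^2(\Omega)$-valued map because $\mathcal{L}$ is closed, $\{\ee^{t\mathcal{L}}\}$ is strongly continuous, and $\partial_{\tau_1}^{r+1}F(t,\tau_1)u_0=(-1)^{r+1}\ee^{(t-\tau_1)\mathcal{L}}ad_{\mathcal{L}}^{\,r+1}(\alpha_{n_1})\ee^{\tau_1\mathcal{L}}u_0$ is again a finite sum of operator words of total order $r+1$, hence in $L^2(\Omega)$ by the claim; this yields $E^{(1)}_r(t)u_0=\mathcal{O}(\omega^{-r-1})$, and it is here that $u_0,\alpha_n\in H^{2p(r+1)}(\Omega)$ is used in full. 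For $d\ge 2$, the first triple sum in (\ref{error_expansion}) consists of integrals $\int_0^t\ee^{\ii\omega\tau_d\,\bm{n}^T\bm{v}^d_\ell}\,\partial_{\tau_d}^{k_d}\!\bigl(\ee^{(t-\tau_d)\mathcal{L}}\alpha_{n_d}F^{\bm{\tilde{k}}}[\bm{\phi}](\tau_d)\bigr)u_0\,\D\tau_d$ whose phases $\bm{n}^T\bm{v}^d_\ell=n_{\ell+1}+\dots+n_d$ are nonzero by (\ref{sum_nonorthogonal_condition}) for $0\le\ell\le d-1$; one more integration by parts — legitimate since the $(k_d{+}1)$-st $\tau_d$-derivative is a finite sum of words of total $\mathcal{L}$-order $|\bm{\tilde{k}}|+k_d+1=r-d+2\le r+1$ applied to $u_0$ — supplies the needed $\omega^{-1}$, while the remaining term $\int_0^t\ee^{(t-\tau_d)\mathcal{L}}\alpha_{n_d}E^{(d-1)}_r(\tau_d)u_0\,\ee^{\ii\omega n_d\tau_d}\,\D\tau_d$ is bounded directly by $t^\star\bigl(\max_{s\in[0,t^\star]}\|\ee^{s\mathcal{L}}\|_{L^2\leftarrow L^2}\bigr)\|\alpha_{n_d}\|_{L^{\infty}}\max_{\tau\in[0,t^\star]}\|E^{(d-1)}_r(\tau)u_0\|_{L^2}=\mathcal{O}(\omega^{-r-1})$ by the inductive hypothesis. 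Summing the two contributions closes the induction.

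The hard part is the bookkeeping inside the claim: one must verify that expanding, via the Leibniz rule, all $\tau$-derivatives of the nested products of semigroups, multiplication operators and powers of $\mathcal{L}$ — together with the boundary substitutions already encoded in the Section \ref{sec3} structure formula — never produces a word whose cumulative $\mathcal{L}$-order exceeds $r+1$, so that $u_0,\alpha_n\in H^{2p(r+1)}(\Omega)$ with $2p>m/2$ is exactly sufficient, including at the ``corner'' where every derivative falls on $u_0$; and one must justify the extra integration by parts at this borderline regularity, i.e. that the pertinent $L^2(\Omega)$-valued functions are genuinely continuously differentiable. Everything else reduces to uniform operator-norm estimates with constants independent of $\omega$ and $t$.
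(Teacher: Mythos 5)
Your proposal is correct and follows essentially the same route as the paper: both reduce (\ref{oszacowanie_S}) to a uniform $L^2$ bound on the operator words $\mathcal{L}^{r_{d+1}}\alpha_{n_d}\cdots\ee^{t\mathcal{L}}\cdots\mathcal{L}^{r_1}u_0$ via the algebra property (\ref{Sobolev_estimation}), the bound $\|\mathcal{L}^{j}v\|_{L^2}\leq C\|v\|_{2pj}$ and Lemma \ref{lemma_bounded_kp}, together with $|A_{\bm{k}}[\bm{\phi}](\bm{n})|\leq 1$. For (\ref{oszacowanie_E}) the paper only remarks that the argument is ``very similar,'' and your induction on $d$ with one additional integration by parts (using the nonresonance of the phases $\bm{n}^T\bm{v}_\ell^d$ and the full $H^{2p(r+1)}$ regularity) is a correct way to supply that omitted detail.
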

\begin{proof}
	We prove (\ref{oszacowanie_S}). The proof of  (\ref{oszacowanie_E}) is very similar. The expression $F_{\bm{n}}^{\bm{k}}[\bm{\phi}](t)$  from the formula (\ref{series_S_expansion}) can be written explicitly as 
	\begin{eqnarray*}\label{alfa_L_sequence}
		F_{\bm{n}}^{\bm{k}}[\bm{\phi}](t) = \sum_{\bm{m}\leq \bm{k}}^{}{\bm{k}\choose \bm{m}}(-1)^{|\bm{m}|+|\bm{k}|}\mathcal{L}^{r_{d+1}} \alpha_{n_d}\mathcal{L}^{r_{d}}\alpha_{n_{d-1}}\dots \mathcal{L}^{r_{\ell+1}}\alpha_{n_{\ell+1}}\ee^{t\mathcal{L}}\mathcal{L}^{r_{\ell}}\alpha_{n_{\ell}}\dots \mathcal{L}^{r_2}\alpha_{n_1}\mathcal{L}^{r_1},
	\end{eqnarray*}
	for certain  vector $\bm{\phi}\in \Phi^d_\ell$, $\ell=0,1,\dots, d$, and multi-indexes $\bm{k}=(k_1,\dots.k_d), \ \bm{m}=(m_1,\dots,m_d)$, where  ${\bm{k}\choose \bm{m}} = {k_1\choose m_1}\dots {k_d\choose m_d}$.
	No-negative numbers $r_1,\dots r_{d+1}$ satisfy $r_1+\dots+r_{d+1}=|\bm{k}|\leq r$ and $\bm{m}\leq \bm{k}$ means $m_i\leq k_i$ for $i=1,2,\dots ,d$.
Using   inequalities (\ref{Sobolev_estimation}) and $\|L^rv\|_{L^2} \leq C \|v\|_{2pr}$  repeatedly, and Lemma \ref{lemma_bounded_kp},  we obtain
\begin{eqnarray*}
&&\|\mathcal{L}^{r_{d+1}} \alpha_{n_d}\mathcal{L}^{r_{d}}\alpha_{n_{d-1}}\dots \mathcal{L}^{r_{\ell+1}}\alpha_{n_{\ell+1}}\ee^{t\mathcal{L}}\mathcal{L}^{r_{\ell}}\alpha_{n_{\ell}}\dots \mathcal{L}^{r_2}\alpha_{n_1}\mathcal{L}^{r_1}u_0\|_{L^2(\Omega)} \\
&&\leq C_1\|\alpha_{n_d}\|_{2pr_{d+1}}\|\alpha_{n_{d-1}}\|_{2p(r_{d+1}+r_{d})}\dots \|\alpha_{n_1}\|_{2p(|\bm{k}|-r_1)}\|u_0\|_{2p|\bm{k}	|} < C_2.
\end{eqnarray*}
Therefore we have 
$$\|F_{\bm{n}}^{\bm{k}}[\bm{\phi}](t)\|_{L^2} \leq C_2\sum_{\bm{m}\leq \bm{k}}^{}{\bm{k}\choose \bm{m}} = C_2 2^{|\bm{k}|}.$$
We have $A_{\bm{k}^{}}[\bm{\phi}](\bm{n}) \leq 1$ and $|\ee^{\ii\omega t \bm{n}^T\bm{v}_{\ell}^d}|=1$. We obtain the estimate
\begin{eqnarray*}
\|\mathcal{S}^{(d)}_r(t)\|_{L^2}& =& \left\|\sum_{|\bm{k}|=0}^{r-d}\frac{(-1)^{|\bm{k}|}}{(\ii\omega)^{d+|\bm{k}|}}\sum_{\ell=0}^{d}\ee^{\ii\omega t \bm{n}^T\bm{v}_{\ell}^d}\sum_{\bm{\phi}\in \Phi^d_{\ell}}^{}A_{\bm{k}}[\bm{\phi}](\bm{n})F^{\bm{k}}[\bm{\phi}](t)\right\|_{L^2}\\
&\leq&  \sum_{|\bm{k}|=0}^{r-d}\frac{1}{\omega^{d+|\bm{k}|}} C_2 2^{|\bm{k}|} \sum_{\ell=0}^{d} \sum_{\bm{\phi}\in \Phi^d_{\ell}}^{} 1  \leq  \sum_{|\bm{k}|=0}^{r-d}\frac{1}{\omega^{d+|\bm{k}|}} C_2 2^{|\bm{k}|}2^d \leq C \omega^{-d},
\end{eqnarray*}
which completes the proof.
\end{proof}
\begin{theorem}\label{theorem_estimation}
Suppose that Assumption 1 is satisfied. In addition, suppose that $u_0, \ \alpha_n \in H^{2p(r+1)}(\Omega), \ n=1,\dots, N$.
	Let $u(t)$ be the solution of (\ref{eq:1.5}) and $u^{[r]}(t) = \sum_{d=0}^{r}T^d\ee^{t\mathcal{L}}u_0 $ be the $r$-th partial sum of the Neumann series.  We denote  $ \| \ \ \|_{L^2}:=\| \ \ \|_{L^2(\Omega)}$. By $\bm{T}$ we mean the operator $\bm{T} = \sum_{d=0}^{\infty}T^{d}$. Then for $t>0$ the following error estimations hold
	\begin{eqnarray*}
		&(1)&\|u(t)-u^{[r]}(t)\| _{L^2} \leq \sup_{\|v(t)\|_{L^2}\leq 1}\|\mathbf{T}v(t)\|_{L^2}\|T^{r+1}\ee^{t\mathcal{L}}u_0\|_{L^2} = \mathcal{O}(\omega^{-r-1}), \\
		&(2)&\|u(t)-U_{as}^{[r]}(t)\|_{L^2} \leq \sup_{\|v(t)\|_{L^2}\leq 1}\|\mathbf{T}v(t)\|_{L^2}\|T^{r+1}\ee^{t\mathcal{L}}u_0\|_{L^2}+\sum_{d=1}^{r}\sum_{\bm{n}\in \{1,\dots,N\}^d}^{}\|E^{(d)}_{r,\bm{n}}(t)u_0\| _{L^2}  = \mathcal{O}(\omega^{-r-1}).
	\end{eqnarray*}
\end{theorem}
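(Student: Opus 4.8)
The plan is to split the error $u(t)-U_{as}^{[r]}(t)$ into two pieces: the truncation error of the Neumann series (which is exactly $u(t)-u^{[r]}(t)$, handled by part (1)) and the error incurred by replacing each integral $I[F_{\bm{n}},\sigma_d(t)]u_0$ with its asymptotic partial sum $\mathcal{S}^{(d)}_{r,\bm{n}}(t)u_0$ for $d=1,\dots,r$. The triangle inequality gives
\begin{eqnarray*}
\|u(t)-U_{as}^{[r]}(t)\|_{L^2} \leq \|u(t)-u^{[r]}(t)\|_{L^2} + \sum_{d=1}^{r}\sum_{\bm{n}\in\{1,\dots,N\}^d}\|I[F_{\bm{n}},\sigma_d(t)]u_0-\mathcal{S}^{(d)}_{r,\bm{n}}(t)u_0\|_{L^2},
\end{eqnarray*}
and by Theorem \ref{theorem_asymptotic_non_resonances} the summand in the double sum is precisely $\|E^{(d)}_{r,\bm{n}}(t)u_0\|_{L^2}$, so once part (1) is established the bound in part (2) follows immediately, and the $\mathcal{O}(\omega^{-r-1})$ rate follows from Lemma \ref{lemma_estimations}, equation (\ref{oszacowanie_E}), applied to each of the finitely many (at most $\sum_{d=1}^r N^d$) terms.

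For part (1), the key algebraic identity is the one already recorded in the proof of Theorem \ref{theorem_Neumann_Series}: $(I-T)\sum_{d=0}^{r}T^d = I-T^{r+1}$. Applying this to $\ee^{t\mathcal{L}}u_0$ and using $(I-T)u(t)=\ee^{t\mathcal{L}}u_0$ (the Duhamel/Neumann fixed-point equation (\ref{eq:Duhamel})) gives $(I-T)(u(t)-u^{[r]}(t)) = T^{r+1}\ee^{t\mathcal{L}}u_0$. Since $I-T$ is a bijection on $V$ with inverse $\mathbf{T}=\sum_{d=0}^{\infty}T^d$ (which converges in operator norm by Lemma \ref{T^d_bounded} and the Weierstrass M-test argument of Theorem \ref{theorem_Neumann_Series}), we get $u(t)-u^{[r]}(t) = \mathbf{T}\,T^{r+1}\ee^{t\mathcal{L}}u_0$, hence the estimate
\begin{eqnarray*}
\|u(t)-u^{[r]}(t)\|_{L^2} \leq \sup_{\|v(t)\|_{L^2}\leq 1}\|\mathbf{T}v(t)\|_{L^2}\,\|T^{r+1}\ee^{t\mathcal{L}}u_0\|_{L^2}.
\end{eqnarray*}
It then remains to show $\|T^{r+1}\ee^{t\mathcal{L}}u_0\|_{L^2}=\mathcal{O}(\omega^{-r-1})$. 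This is where the asymptotic gain must be extracted: write $T^{r+1}\ee^{t\mathcal{L}}u_0=\sum_{\bm{n}\in\{1,\dots,N\}^{r+1}}I[F_{\bm{n}},\sigma_{r+1}(t)]u_0$, and apply Theorem \ref{theorem_asymptotic_non_resonances} with the minimal expansion order $r=d=r+1$, i.e.\ with no correction terms, so that $I[F_{\bm{n}},\sigma_{r+1}(t)]u_0 = \mathcal{S}^{(r+1)}_{r+1,\bm{n}}(t)u_0 + E^{(r+1)}_{r+1,\bm{n}}(t)u_0$; both terms are $\mathcal{O}(\omega^{-(r+1)})$ by Lemma \ref{lemma_estimations} (estimates (\ref{oszacowanie_S}) and (\ref{oszacowanie_E}) with $d=r+1$), and there are only $N^{r+1}$ of them.

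The main obstacle I anticipate is bookkeeping rather than conceptual: one must check that the smoothness hypothesis $u_0,\alpha_n\in H^{2p(r+1)}(\Omega)$ is exactly what Lemma \ref{lemma_estimations} needs when applied at dimension $d=r+1$ with expansion order $r+1$ (the commutators $ad_{\mathcal L}^{k}$ appearing in $F^{\bm k}_{\bm n}[\bm\phi]$ raise the required Sobolev regularity by $2p$ per unit of $|\bm k|$, and here $|\bm k|=0$ so only $2p(r+1)$ derivatives of the $\alpha_n$ and of $u_0$ enter), and that $\bm{T}$ is a bounded operator on $C([0,t^\star];D(\mathcal{L}))$ so that $\sup_{\|v(t)\|_{L^2}\le 1}\|\mathbf{T}v(t)\|_{L^2}$ is finite; the latter is immediate from the absolute convergence $\sum_d \|T^d\|<\infty$ established in Theorem \ref{theorem_Neumann_Series}. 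A minor point to state carefully is that every $\bm n\in\{1,\dots,N\}^d$ automatically satisfies the nonresonance condition (\ref{sum_nonorthogonal_condition}), since all coordinates are positive, so Theorem \ref{theorem_asymptotic_non_resonances} and Lemma \ref{lemma_estimations} apply without restriction to every term of the Neumann series. With these checks in place, both estimates (1) and (2) follow by assembling the pieces above.
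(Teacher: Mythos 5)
Your proposal is correct and follows essentially the same route as the paper: part (1) rests on the factorization $\sum_{d\ge r+1}T^d = \mathbf{T}\,T^{r+1}$ (which you derive via $(I-T)\sum_{d=0}^{r}T^d=I-T^{r+1}$ and the paper gets by re-indexing the tail of the series), part (2) uses the same triangle-inequality split, and the rate $\mathcal{O}(\omega^{-r-1})$ is extracted exactly as in the paper by applying Theorem \ref{theorem_asymptotic_non_resonances} and Lemma \ref{lemma_estimations} at dimension $d=r+1$ with no correction terms, every $\bm{n}\in\{1,\dots,N\}^{r+1}$ being nonresonant since its entries are positive. The one point to tighten is your justification that $\sup_{\|v(t)\|_{L^2}\le 1}\|\mathbf{T}v(t)\|_{L^2}<\infty$: the convergence established in Theorem \ref{theorem_Neumann_Series} bounds $\|T^d\|$ on $C([0,t^\star];D(\mathcal{L}))$, i.e.\ in the $H^{2p}$ operator norm, whereas here the supremum is over the $L^2$ unit ball, so it is not literally "immediate" from that theorem; the paper instead observes that $\psi=\mathbf{T}v$ solves $\psi(t)=v(t)+\int_0^t\ee^{(t-\tau)\mathcal{L}}f(\tau)\psi(\tau)\,\D\tau$ and applies Gr\"onwall to get $\|\mathbf{T}v(t)\|_{L^2}\le\exp(tC_1\|f\|_\infty)$. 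Your argument is easily repaired (the factorial bound of Lemma \ref{T^d_bounded} goes through in $L^2$ once the Sobolev algebra inequality is replaced by $\|f(\tau)\psi\|_{L^2}\le\|f\|_\infty\|\psi\|_{L^2}$, as the paper's own remark after Theorem \ref{theorem_Neumann_Series} notes), but as written the norm mismatch should be addressed.
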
 
\begin{proof}
	Let $t>0$.  According to Theorem \ref{theorem_Neumann_Series}, solution $u(t)$ of (\ref{eq:1.5}) can be expressed as the Neumann series $u(t) =  \sum_{d=0}^{\infty}T^d\ee^{t\mathcal{L}}u_0$. Therefore we have
	\begin{eqnarray*}
		\|u(t)-u^{[r]}(t)\| _{L^2} = \left\|\sum_{d=r+1}^{\infty}T^d\ee^{t\mathcal{L}}u_0\right\|_{L^2} = \left\|\sum_{d=0}^{\infty}T^{d}T^{r+1}\ee^{t\mathcal{L}}u_0\right\|_{L^2}  \leq \sup_{\|v(t)\|_{L^2}\leq 1}\|\mathbf{T} v(t)\|_{L^2} \|T^{r+1}\ee^{t\mathcal{L}}u_0\|_{L^2}. 
	\end{eqnarray*}
	By Theorem \ref{theorem_asymptotic_non_resonances} and Lemma \ref{lemma_estimations}, we have $\|T^{r+1}\ee^{t\mathcal{L}}u_0\|_{L^2}\leq\sum_{\bm{n}\in\{1,\dots,N\}^d}\left(\|\mathcal{S}_{r+1,\bm{n}}^{(r+1)}(t)u_0\|_{L^2}+\|E_{r+1,\bm{n}}^{(r+1)}(t)u_0\|_{L^2}\right) $ $= \mathcal{O}(\omega^{-r-1})$.  
	To show (2) we use the triangle inequality
	\begin{eqnarray*}
		\|u(t)-U_{as}^{[r]}(t)\|_{L^2} &\leq& \|u(t)-u^{[r]}(t)\|_{L^2} +\|u^{[r]}(t)-U_{as}^{[r]}(t)\|_{L^2}\\
		& \leq&\sup_{\|v(t)\|_{L^2}\leq 1}\|\mathbf{T}v(t)\|_{L^2} \|T^{r+1}\ee^{t\mathcal{L}}u_0\|_{L^2} +\sum_{d=1}^{r}\sum_{\bm{n}\in \{1,\dots,N\}^d}^{}\|E^{(d)}_{r,\bm{n}}(t)u_0\| _{L^2}, 
	\end{eqnarray*}
	and for each $\bm{n}\in \{1,\dots,N\}^d$,  by Lemma \ref{lemma_estimations}, we have  $\|E^{(d)}_{r,\bm{n}}(t)u_0\|_{L^2} = \mathcal{O}(\omega^{-r-1})$.
 It remains to show that $\sup_{\|v(t)\|_{L^2}\leq 1}\|\mathbf{T}v(t)\|_{L^2}\|$ is bounded. Expression $\bm{T}v(t)$, where $\|v(t)\|_{L^2}\leq 1$ is the solution of the integral equation
\begin{eqnarray}\label{Series_operator}
\psi(t)=v(t)+\int_0^t \ee^{(t-\tau)\mathcal{L}}f(\tau)\psi(\tau)\D\tau.
\end{eqnarray}
 Let   $C_1:= \max_{t \in [0,t^\star]}\|\ee^{t\mathcal{L}}\|_{L^2(\Omega)\leftarrow L^2(\Omega)} $.
By  Gr\"{o}nwall's inequality, solution of (\ref{Series_operator}) can be estimated as follows
\begin{eqnarray}\label{constant_estimation}
\|\mathbf{T}v(t)\|_{L^2}=\|\psi(t)\|_{L^2} \leq \exp(t C_1 \|f\|_\infty ),
\end{eqnarray}
which concludes the proof.
\end{proof}
\begin{remark}
	The upper bound  (\ref{constant_estimation}) of constant $\|\mathbf{T}v(t)\|_{L^2}$ may be large, especially for a  big magnitude of $\|f\|_\infty$. Therefore, let us also consider a different approach. In the proof of inequality (1) of Theorem \ref{theorem_estimation}, we estimate the truncation of the Neumann series 
	\begin{eqnarray}\label{truncation_Neumann_series}
	\|u(t)-u^{[r]}(t)\|_{L^2} =\left\|\sum_{d=0}^{\infty}T^{d}T^{r+1}\ee^{t\mathcal{L}}u_0\right\|_{L^2}. 
	\end{eqnarray}
	Let now $v(t) = T^{r+1}\ee^{t\mathcal{L}}u_0.$ It is  straightforward to verify that $\psi(t)=\sum_{d=0}^\infty T^dv(t)$ is the solution of the following non-homogenious equation 
	\begin{align}\label{equation_remark}
	&\psi'(t)=\mathcal{L}\psi(t)+f(t)\psi(t)+ v'(t)-\mathcal{L}v(t),  \\
	&\psi(0)=v(0) \nonumber.
	\end{align}
	From the form of function $v(t)$ we have
	$$v'(t)-\mathcal{L}v(t) = f(t)T^r\ee^{t\mathcal{L}}u_0 + \mathcal{L}T^{r+1}\ee^{t\mathcal{L}}u_0-\mathcal{L}T^{r+1}\ee^{t\mathcal{L}}u_0 =f(t)T^r\ee^{t\mathcal{L}}u_0 $$
	and $v(0) \equiv 0$. Therefore, equation (\ref{equation_remark}) reads
	\begin{align}\label{equation_remark2}
	&\psi'(t)= (\mathcal{L}+f(t))\psi(t) + f(t)T^r\ee^{t\mathcal{L}}u_0 ,  \\
	&\psi(0)=0 \nonumber,
	\end{align}
	and $f(t)T^r\ee^{t\mathcal{L}}u_0 =\mathcal{O}(\omega^{-r})$.
	Moreover, for a sufficiently small time variable $t$, the solution of (\ref{equation_remark2}) can be written as
	$$\psi(t) = \int_0^t \Phi(t,s)f(s)T^r\ee^{s\mathcal{L}}u_0\D s,$$
	where $\Phi(t,s)$ is the solution of the homogeneous problem 
	$$\Phi'(t,s) = (\mathcal{L}+f(s))\Phi(t,s), \qquad \Phi(s,s) = 1,$$
	and $\Phi(t,s) = \exp(\Omega(t,s))$, where $\Omega(t,s)$ is the Magnus expansion.
	If, for example, (\ref{eq:1.5}) is the Schr\"{o}dinger equation with a time dependent potential, then  
	$\|\Phi(t,s)\|_{L^2}\equiv 1$ and therefore the truncation (\ref{truncation_Neumann_series}) can be estimated in a different manner $$\left\|\sum_{d=0}^{\infty}T^{d}T^{r+1}\ee^{t\mathcal{L}}u_0\right\|_{L^2}=\|\psi(t)\|_{L^2} \leq  t \|f\|_\infty \max_{s\in [0,t]}\|T^r \ee^{s\mathcal{L}}u_0\|_{L^2}.$$ This provides a different estimate of the error constant $\|\mathbf{T}v(t)\|_{L^2}$.
	
\end{remark}

Now we provide a ready-made formula for the sum (\ref{partial_expansion}) for $R=3$, which approximates the solution  $u(t)$ of problem (\ref{eq:1.5}) with error $\mathcal{O}(\omega^{-4})$.
Solution $u(t)$ can be written as
\begin{eqnarray*}
	u(t) = \ee^{t\mathcal{L}}u_0+T^1\ee^{t\mathcal{L}}u_0+T^2\ee^{t\mathcal{L}}u_0+T^3\ee^{t\mathcal{L}}u_0+\bm{T}T^{4}\ee^{t\mathcal{L}}u_0,
\end{eqnarray*}
where $\bm{T} = \sum_{d=0}^{\infty}T^{d}$, and the magnitude of the truncation of the Neumann series $\|\bm{T}T^{4}\ee^{t\mathcal{L}}u_0\|_2 = \mathcal{O}(\omega^{-4})$.

Each term  $T^1\ee^{t\mathcal{L}}u_0$, $T^2\ee^{t\mathcal{L}}u_0$ and $T^3\ee^{t\mathcal{L}}u_0$ we expand    as follows
\begin{eqnarray*}
	T^1\ee^{t\mathcal{L}}u_0 &=&\sum_{n_1\in \{1,\dots,N\}}^{} \left(\mathcal{S}_{3,n_1}^{(1)}(t)+E_{3,n_1}^{(1)}(t)\right)u_0,\\
	T^2\ee^{t\mathcal{L}}u_0&=&\sum_{\bm{n}\in \{1,\dots,N\}^2}^{}\left(\mathcal{S}_{3,\bm{n}}^{(2)}(t)+E_{3,\bm{n}}^{(2)}(t)\right)u_0,\\
	T^3\ee^{t\mathcal{L}}u_0&=&\sum_{\bm{n}\in \{1,\dots,N\}^3}^{}\left(\mathcal{S}_{3,\bm{n}}^{(3)}(t)+E_{3,\bm{n}}^{(3)}(t)\right)u_0.
\end{eqnarray*} 
The partial sum of the asymptotic expansion that approximates  solution $u(t)$ is   $$U_{as}^{[3]}(t)=\ee^{t\mathcal{L}}u_0+\sum_{n_1\in \{1,\dots,N\}}\mathcal{S}_{3,n_1}^{(1)}(t)u_0+\sum_{\bm{n}\in \{1,\dots,N\}^2}\mathcal{S}_{3,\bm{n}}^{(2)}(t)u_0+\sum_{\bm{n}\in \{1,\dots,N\}^3}\mathcal{S}_{3,\bm{n}}^{(3)}(t)u_0,$$  where $\mathcal{S}_{3,n_1}^{(1)}(t)u_0$,  $\mathcal{S}_{3,\bm{n}}^{(2)}(t)u_0$ and $\mathcal{S}_{3,\bm{n}}^{(3)}(t)u_0$, by identity (\ref{series_S_expansion}), are of the following form
\footnotesize
\begin{eqnarray*}
	\mathcal{S}_{3,n_1}^{(1)}(t)u_0 &=& \frac{1}{\ii n_1 \omega }\left(\ee^{\ii n_1\omega t}\alpha_{n_1}\ee^{t\mathcal{L}}-\ee^{t\mathcal{L}}\alpha_{n_1}\right)u_0+\frac{1}{(\ii n_1\omega)^2}\left(\ee^{\ii n_1\omega t }ad_\mathcal{L}^1(\alpha_{n_1})\ee^{t\mathcal{L}}-\ee^{t\mathcal{L}}ad_\mathcal{L}^1(\alpha_{n_1})\right)u_0\\
	&+&\hspace{1cm}\frac{1}{(\ii n_1\omega)^3}\left(\ee^{\ii n_1\omega t }ad_\mathcal{L}^2(\alpha_{n_1})\ee^{t\mathcal{L}}-\ee^{t\mathcal{L}}ad_\mathcal{L}^2(\alpha_{n_1})\right)u_0,
\end{eqnarray*}

\begin{eqnarray*}
	\mathcal{S}_{3,\bm{n}}^{(2)}(t)u_0 &=& \frac{1}{(\ii\omega)^2}\left(\ee^{\ii\omega t(n_1+n_2)}\frac{1}{n_1(n_1+n_2)}\alpha_{n_2}\alpha_{n_1}\ee^{t\mathcal{L}}-\ee^{\ii\omega tn_2}\frac{1}{n_1n_2}\alpha_{n_2}\ee^{t\mathcal{L}}\alpha_{n_1}+\frac{1}{n_2(n_1+n_2)}\ee^{t\mathcal{L}}\alpha_{n_2}\alpha_{n_1}\right)u_0\\
	&+&\frac{1}{(\ii\omega)^3}\Bigg(\ee^{\ii\omega t(n_1+n_2)}\left(\frac{1}{n_1^2(n_1+n_2)}\alpha_{n_2}ad_\mathcal{L}^1(\alpha_{n_1})\ee^{t\mathcal{L}}+\frac{1}{n_1(n_1+n_2)^2}ad_\mathcal{L}^1(\alpha_{n_2}\alpha_{n_1})\ee^{t\mathcal{L}}\right)\\
	&+&\hspace{1cm}\ee^{\ii\omega t n_2}\left(\frac{-1}{n_1^2n_2}\alpha_{n_2}\ee^{t\mathcal{L}}ad_\mathcal{L}^1(\alpha_{n_1})-\frac{1}{n_1n_2^2}ad_\mathcal{L}^1(\alpha_{n_2})\ee^{t\mathcal{L}}\alpha_{n_1}\right)+ \frac{1}{n_1n_2(n_1+n_2)}\ee^{t\mathcal{L}}\alpha_{n_2}ad_\mathcal{L}^1(\alpha_{n_1})\\
	&+&\hspace{1cm}\frac{-1}{n_1(n_1+n_2)^2}\ee^{t\mathcal{L}}ad_\mathcal{L}^1(\alpha_{n_2}\alpha_{n_1})+\frac{1}{n_1n_2^2}\ee^{t\mathcal{L}}ad_\mathcal{L}^1(\alpha_{n_2})\alpha_{n_1}\Bigg)u_0,
\end{eqnarray*}
\begin{eqnarray*}
	S_{3,\bm{n}}^{(3)}(t)u_0&=&\frac{1}{(\ii\omega)^3}\Bigg(\frac{\ee^{\ii\omega t(n_1+n_2+n_3)}}{n_1(n_1+n_2)(n_1+n_2+n_3)}\alpha_{n_3}\alpha_{n_2}\alpha_{n_1}\ee^{t\mathcal{L}}-\frac{\ee^{\ii\omega t(n_2+n_3)}}{n_1n_2(n_2+n_3)}\alpha_{n_3}\alpha_{n_2}\ee^{t\mathcal{L}}\alpha_{n_1}\\
	&+&\hspace{1cm}\frac{\ee^{\ii\omega t n_3}}{n_2n_3(n_1+n_2)}\alpha_{n_3}\ee^{t\mathcal{L}}\alpha_{n_2}\alpha_{n_1}- \frac{1}{(n_1+n_2+n_3)(n_2+n_3)n_3}\ee^{t\mathcal{L}}\alpha_{n_3}\alpha_{n_2}\alpha_{n_1}\Bigg)u_0.
\end{eqnarray*}
\normalsize
Expressions of type $\ee^{t\mathcal{L}}u_0$,  $\ee^{t\mathcal{L}}\alpha_{n_j}u_0$, $\ee^{t\mathcal{L}}ad_\mathcal{L}^1(\alpha_{n_j})u_0$, etc.  can be computed either explicitly, or very efficiently and accurately by using the spectral methods \cite{trefethen} and/or the splitting methods \cite{Splitting_acta}.

To summarize, each terms of the Neumann series $T^d\ee^{t\mathcal{L}}u_0$  can be written as the following sum together with the error
$$T^d\ee^{t\mathcal{L}}u_0= \frac{1}{\omega^d}\sum_{s=0}^{dN}\ee^{\ii\omega s}P_{d,s}^d+  \frac{1}{\omega^{d+1}}\sum_{s=0}^{dN}\ee^{\ii\omega s}P_{d+1,s}^d+\dots+ \frac{1}{\omega^{r}}\sum_{s=0}^{dN}\ee^{\ii\omega s}P_{r,s}^d+ \frac{1}{\omega^{r+1}}\bm{E}_{r}^d .$$
$P_{j,s}^d$  consists of terms  of $\mathcal{S}_{r,\bm{n}}^d(t)u_0$, independent of $\omega$, which is with the frequency $\ee^{\ii\omega s}$ and the magnitude $\frac{1}{\omega^{j}}$. $\bm{E}_{r}^d$  is the whole error associated with the approximation of $T^d\ee^{t\mathcal{L}}u_0$ by the partial sum of the asymptotic series.

It is  convenient to present the idea of the asymptotic expansion in the following table
\footnotesize
\arraycolsep=4pt\def\arraystretch{2}
$$  
\begin{array}{ccccccc}
T^1\ee^{t\mathcal{L}}u_0=  & \frac{1}{\omega}\sum_{s=0}^{N}\ee^{\ii\omega s}P_{1,s}^1& +\frac{1}{\omega^2}\sum_{s=0}^{N}\ee^{\ii\omega s}P_{2,s}^1& +\frac{1}{\omega^3}\sum_{s=0}^{N}\ee^{\ii\omega s}P_{3,s}^1+ & \ldots & +\frac{1}{\omega^r}\sum_{s=0}^{N}\ee^{\ii\omega s}P_{r,s}^1&+\frac{1}{\omega^{r+1}}\bm{E}^{1}_{r}\\
T^2\ee^{t\mathcal{L}}u_0=&   & \quad \frac{1}{\omega^2}\sum_{s=0}^{2N}\ee^{\ii\omega s}P_{2,s}^2&   +\frac{1}{\omega^3}\sum_{s=0}^{2N}\ee^{\ii\omega s}P_{3,s}^2+& \ldots & +\frac{1}{\omega^r}\sum_{s=0}^{2N}\ee^{\ii\omega s}P_{r,s}^2&+\frac{1}{\omega^{r+1}}\bm{E}_r^2 \\ 
T^3\ee^{t\mathcal{L}}u_0= &  & & \quad \frac{1}{\omega^3}\sum_{s=0}^{3N}\ee^{\ii\omega s}P_{3,s}^3+& \ldots& +\frac{1}{\omega^r}\sum_{s=0}^{3N}\ee^{\ii\omega s}P_{r,s}^3&+\frac{1}{\omega^{r+1}}\bm{E}^{3}_{r}  \\
\vdots &   &  &   & \ddots& \vdots \\
T^{r}\ee^{t\mathcal{L}}u_0=& &  &   &  & \quad  \frac{1}{\omega^r}\sum_{s=0}^{rN}\ee^{\ii\omega s}P_{r,s}^r&+\frac{1}{\omega^{r+1}}\bm{E}^{r}_{r} \\
\end{array}
$$
\normalsize
The error of the method consists of truncating the Neumann series and truncating the asymptotic expansion of each term $T^d\ee^{t\mathcal{L}}u_0, \ d=1,\dots r$.

\section{Asymptotic expansion -- integrals with resonance points}\label{sec6}
Theorem \ref{theorem_asymptotic_non_resonances} allows for the asymptotic expansion of  integral $I[F_{\bm{n}},\sigma_d(t)]$    provided that vector $\bm{n}$  satisfies the nonresonance condition (\ref{sum_nonorthogonal_condition}).
However, given the potential function of a more general  form
(\ref{ideal_function_f2}) with negative frequencies,
the solution of  the problem (\ref{eq:1.5}) is 
$$u(t) = \ee^{t\mathcal{L}}u_0+\sum_{d=1}^{\infty}\sum_{\bm{n}\in  \bm{N}^d} I[F_{\bm{n}},\sigma_d(t)]u_0,$$
where $\bm{N}^d$ is a set of all possible vectors $\bm{n}$ 
\begin{eqnarray}\label{set_N_resonance}
\bm{N}^d:= \{-N,\dots,-2,-1,1,2,\dots,N\}^d.
\end{eqnarray}
Set $\bm{N}^d$ comprises vectors $\bm{n}$ which are orthogonal to the simplex $\sigma_d(t)$ or its boundary. Coordinates of such $\bm{n}$  satisfy
$$
n_j + n_{j+1}+\dots +n_{r-1}+n_r = 0,
$$
for certain $1\leq j < r\leq d$. For those  vectors we cannot apply Theorem \ref{theorem_asymptotic_non_resonances}  since  otherwise we would divide by zero in  expressions (\ref{expressions_A}).
Our advantage is, however, that instead of one integral with resonance points, we are  to deal with a sum of such integrals.
In Theorem \ref{theorem_asymptotic_non_resonances}, an increase in the dimension of integral results in an increase in the rate of decay -- while $I[F, \sigma_{d-1}(t)] \sim \mathcal{O}(\omega^{-d+1}), $ $I[F, \sigma_{d}(t)] \sim \mathcal{O}(\omega^{-d})$. This is because at any step of the proof we could integrate by parts each integral since, due to non-resonance assumption, the argument of frequency exponent $\bm{n}^T\bm{v}_\ell^d \neq 0$ and then $\ee^{\ii\omega t \bm{n}^T\bm{v}_\ell^d }$ never disappears. 

At this stage we consider integral $I[F_{\bm{n}},\sigma_d(t)]$ with vector $\bm{n} = (n_1,n_2,n_3,\dots,n_d) \in \bm{N}^d $ whose coordinates satisfy
\begin{eqnarray} \label{sum_resonances}
n_1+\dots +n_d=0 \quad \text{and} \quad n_j+n_{j+1}+\dots +n_{j+r}\neq0 \quad \text{for}   \quad j=1,\dots,d, \ 1\leq j+r \leq d.
\end{eqnarray}
In other words, $\bm{n}$ is  orthogonal only to one the   edge of simplex $\sigma_d(t)$ contained in  line $\{(x_1,x_2,\dots,x_d)\in \mathbb{R}^d : x_1=x_2=\dots=x_d$\}. In such a situation, we have $\bm{n}^T\bm{v}_0^d=n_1+\dots+n_d=0$ and therefore in the proof of Theorem \ref{theorem_asymptotic_non_resonances} we cannot integrate by parts the last, outer integral with $\ee^ {\ii\omega \tau_d\bm{n}^T\bm{v}_0^d}$.
The general case involving the whole set (\ref{set_N_resonance}) is a matter of further research.

Let $\bm{n}_j \in \mathbb{N}^d, \ j=1,\dots, d$ be vectors satisfying (\ref{sum_resonances}), such that
\begin{eqnarray*}
	\bm{n}_1 &=& (n_1,n_2,n_3,\dots,n_{d-1},n_d), \\
	\bm{n}_2 &=& (n_2,n_3,n_4,\dots,n_{d},n_{1}), \\ 
	&\vdots&\\
	\bm{n}_j&=& (n_j,n_{j+1},\dots,n_d,n_1,\dots,n_{j-1}),\\
	&\vdots&\\
	\bm{n}_d &=& (n_{d},n_1,n_2,\dots,n_{d-2},n_{d-1}).
\end{eqnarray*}
If $\bm{n}_1 \in \bm{N}^d$, then vectors $\bm{n}_j, \ j=2,\dots d$  are in set $\bm{N}^d$ (\ref{set_N_resonance}) as well.
Because of assumption (\ref{sum_resonances}), for each $j=1,\dots, d$ we have  $\sum_{j=1}^{d}\bm{n}_j=0.$
From the coordinates of vectors $\bm{n}_j$ we form the following fractional numbers 
\begin{eqnarray*}
	A[\bm{\tilde{n}}_{1}]&=& \frac{1}{n_{1}(n_{1}+n_2)(n_{1}+n_2+n_3)\dots(n_{1}+n_2+n_{3}+\dots+n_{d-1})},	\\
	A[\bm{\tilde{n}}_{2}]&=& \frac{1}{n_{2}(n_{2}+n_3)(n_{2}+n_3+n_4)\dots(n_{2}+n_3+n_{4}+\dots+n_{d})},	\\
	& \vdots&\\
	A[\bm{\tilde{n}}_{k+1}]&=&\frac{1}{n_{k+1}(n_{k+1}+n_{k+2})\dots(n_{k+1}+n_{k+2}+\dots +n_d+n_1+\dots+n_{k-1})}, \\
	& \vdots&\\
	A[\bm{\tilde{n}}_{d}]&=& \frac{1}{n_{d}(n_{d}+n_1)(n_{d}+n_1+n_2)\dots(n_{d}+n_1+n_2\dots+n_{d-3}+n_{d-2})}.
\end{eqnarray*}
(We use  cyclic notation $n_s=n_{s+d}$ for $s\in \mathbb{Z}$).
\begin{lemma}\label{lemma_sum_A}
	Let   $\bm{n}_j$,   $j=1,\dots,d$ be  vectors which satisfy condition  (\ref{sum_resonances}). Then
	$$	\sum_{j=1}^{d}A[\bm{\tilde{n}}_j]=0.$$
\end{lemma}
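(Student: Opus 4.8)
The plan is to reduce the claim to the classical partial-fraction identity $\sum_k 1/P'(s_k)=0$ for a polynomial $P$ of degree $\ge 2$ with simple roots. First I would introduce the partial sums $s_0:=0$ and $s_k:=n_1+\dots+n_k$ for $1\le k\le d$. Since $n_1+\dots+n_d=0$ we have $s_d=0=s_0$, so $(s_k)$ extends to a $d$-periodic sequence on $\mathbb{Z}$ with $s_{k+d}=s_k$. The content of condition (\ref{sum_resonances}) is precisely that every sum over a consecutive block of indices of length strictly less than $d$ is nonzero (a wrapping block $n_{a+1}+\dots+n_d+n_1+\dots+n_b$ with $a>b$ equals $-(n_{b+1}+\dots+n_a)$, minus a non-wrapping proper block, hence also nonzero). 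In terms of the $s_k$ this says exactly that $s_0,s_1,\dots,s_{d-1}$ are pairwise distinct and, more generally, $s_{k+j}-s_k\neq 0$ for all $k$ and all $1\le j\le d-1$, so every denominator occurring below is well defined.

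Next I would rewrite each $A[\bm{\tilde n}_{k+1}]$ in terms of the $s_k$. For the cyclic shift $\bm{n}_{k+1}=(n_{k+1},\dots,n_d,n_1,\dots,n_k)$, its $j$-th partial sum is, reading indices mod $d$, equal to $n_{k+1}+\dots+n_{k+j}=s_{k+j}-s_k$; hence $A[\bm{\tilde n}_{k+1}]=\prod_{j=1}^{d-1}(s_{k+j}-s_k)^{-1}$. As $j$ runs over $1,\dots,d-1$ the index $k+j$ runs (mod $d$) over all residues except $k$, so $A[\bm{\tilde n}_{k+1}]=\prod_{m\neq k}(s_m-s_k)^{-1}$, the product over $m\in\{0,\dots,d-1\}\setminus\{k\}$.

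Now set $P(x)=\prod_{m=0}^{d-1}(x-s_m)$, a polynomial of degree $d\ge 2$ with simple roots $s_0,\dots,s_{d-1}$. Then $P'(s_k)=\prod_{m\neq k}(s_k-s_m)$, so $A[\bm{\tilde n}_{k+1}]=(-1)^{d-1}/P'(s_k)$ and $\sum_{j=1}^{d}A[\bm{\tilde n}_j]=(-1)^{d-1}\sum_{k=0}^{d-1}1/P'(s_k)$. The final step is the identity $\sum_{k=0}^{d-1}1/P'(s_k)=0$: from the partial-fraction decomposition $1/P(x)=\sum_{k=0}^{d-1}\frac{1}{P'(s_k)}\frac{1}{x-s_k}$, multiplying by $x$ and letting $x\to\infty$ gives $0=\sum_{k}1/P'(s_k)$ because $\deg P\ge 2$ (equivalently, $\sum_k 1/P'(s_k)$ is the sum of the residues of $1/P$, which vanishes when $\deg P\ge 2$). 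Combining these equalities yields $\sum_{j=1}^{d}A[\bm{\tilde n}_j]=0$.

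There is no real obstacle here beyond bookkeeping: the one step that needs care is verifying that the $j$-th partial sum of the $k$-th cyclic shift is exactly $s_{k+j}-s_k$ and that, as $j$ varies, these values exhaust $\{s_m-s_k:m\neq k\}$; once this is checked, the statement is the standard vanishing of $\sum_k 1/P'(s_k)$.
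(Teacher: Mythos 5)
Your proof is correct, and it takes a cleaner, more symmetric route than the paper's. You pass to the partial sums $s_k=n_1+\dots+n_k$ (with $s_0=s_d=0$ by the resonance condition), observe that each $A[\bm{\tilde{n}}_{k+1}]$ equals $\prod_{m\neq k}(s_m-s_k)^{-1}=(-1)^{d-1}/P'(s_k)$ for the single polynomial $P(x)=\prod_{m=0}^{d-1}(x-s_m)$, and then invoke the classical vanishing of $\sum_k 1/P'(s_k)$ for $\deg P\geq 2$ with simple roots. The paper instead singles out $A[\bm{\tilde{n}}_1]$, performs a partial-fraction decomposition in the variable $n_1$ with the Heaviside cover-up method, and identifies each resulting term directly with $-A[\bm{\tilde{n}}_{j+1}]$ through an explicit index computation. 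The underlying mechanism is the same (partial fractions, equivalently the sum-of-residues of $1/P$), but your reformulation treats all $d$ cyclic shifts on an equal footing and replaces the paper's index-heavy manipulation with a standard identity; the one nontrivial bookkeeping step, that the $j$-th partial sum of the $k$-th shift is $s_{(k+j)\bmod d}-s_k$ and that these exhaust $\{s_m-s_k: m\neq k\}$, you state and it checks out, as does your verification that condition (\ref{sum_resonances}) makes the $s_0,\dots,s_{d-1}$ pairwise distinct (so $P$ has simple roots and no denominator vanishes).
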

\begin{proof}
	It is sufficient to apply the partial fraction decomposition to  $A[\bm{\tilde{n}}_{1}]$, by treating  $n_1$ as a variable and other numbers $n_r, \ r\neq 1$ as constants. 
	We aim to express $A[\bm{\tilde{n}}_{1}]$ as
	\begin{eqnarray}\label{decomposition}
	A[\bm{\tilde{n}}_{1}]=\sum_{j=1}^{d-1}\frac{N_j}{n_1+\dots+n_j},
	\end{eqnarray}
	for certain numbers $N_j, \ j=1,\dots d-1$.
	Let us fix index $j$. To determine coefficient $N_j$, we use Heaviside cover-up method.   Substituting $n_1=-(n_2+\dots+n_j)$ we have 
	\footnotesize
	\begin{eqnarray*}
		N_j&=&\frac{1}{n_1(n_1+n_2)\dots(n_1+\dots+n_{j-1})(n_1+\dots+n_{j+1})\dots(n_1+\dots+n_{d-1})}\,\rule[-9pt]{0.75pt}{25pt}_{\,n_1=-(n_2+\dots+n_j)}\\
		&=&\frac{(-1)^{j-1}}{(n_2+\dots+n_j)\dots n_jn_{j+1}(n_{j+1}+n_{j+2})\dots(n_{j+1}+\dots+n_{d-1})}\\
		&=&\frac{1}{(n_{j+1}+\dots+n_{1})\dots(n_{j+1}+n_{j+2}+\dots +n_{j-1})n_{j+1}(n_{j+1}+n_{j+2})\dots(n_{j+1}+\dots n_{d-1})}\\
		&=&\frac{n_{j+1}+\dots+n_d}{n_{j+1}(n_{j+1}+n_{j+2})\dots(n_{j+1}+\dots+n_{d-1})(n_{j+1}+\dots+n_d)(n_{j+1}+\dots+n_1)\dots(n_{j+1}+\dots+n_{j-1})}
	\end{eqnarray*}
	\normalsize
	(in the penultimate equality we used assumption (\ref{sum_resonances})).
	Now since $n_{j+1}+\dots+n_d =-(n_1+\dots+n_j)$, substituting $N_j$   into  (\ref{decomposition}) we obtain
	\begin{eqnarray*}
		A[\bm{\tilde{n}}_{1}]=\sum_{j=1}^{d-1}\frac{-1}{n_{j+1}(n_{j+1}+n_{j+2})\dots(n_{j+1}+\dots+n_{j-1})}= -\sum_{j=1}^{d-1}A[\bm{\tilde{n}}_{j+1}],
	\end{eqnarray*}
	which completes the proof.
\end{proof}

Now let us notice, that if multi-index $\bm{\tilde{k}}=(k_1,\dots,k_{d-1})$ satisfy $|\bm{\tilde{k}}|=0$ and $\bm{\phi}=(1,\dots,1) \in \Phi_0^{d-1}$, then $A_{\bm{\tilde{k}}}[\bm{\phi}](\bm{\tilde{n}}_j) = A[\bm{\tilde{n}}_j]$, where $A_{\bm{\tilde{k}}}[\bm{\phi}](\bm{\tilde{n}}_j)$ are the coefficients from Definition \ref{numbers_A}. In other words, numbers $A[\bm{\tilde{n}}_j], \ j=1,\dots, d$ appear with the first term $\frac{1}{(\ii\omega)^{d-1}}$ of the asymptotic expansion of the integral $I[F_{\bm{\tilde{n}}_j},\sigma_{d-1}(t)]$.

\begin{theorem}\label{theorem_resonances}
	Let $\bm{n}_j \in \bm{N}^d $ be  vector which satisfies (\ref{sum_resonances}) and let  $F_{\bm{n}_j} $ be the operator  defined in (\ref{function_f_operator}) with corresponding  vector $\bm{n}_j, \ j=1,\dots,d$.
	Then 
	\begin{eqnarray*}
		\sum_{j=1}^{d}\int_{\sigma_d(t)}F_{\bm{n}_j}(t,\bm{\tau})\ee^{\ii\omega\bm{n}_j^T \bm{\tau} }\D \bm{\tau} \sim \mathcal{O}(\omega^{-d}).
	\end{eqnarray*}
	In other words,   the sum of these integrals with resonance points over a simplex $\sigma_d(t) $ decays in the same manner as an integral over the same domain without resonance points.
\end{theorem}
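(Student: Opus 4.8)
The plan is to peel off the outermost integration variable, reducing each $I[F_{\bm n_j},\sigma_d(t)]$ to a one–dimensional $\tau_d$–integral whose inner integrand is an integral over the $(d-1)$–simplex to which the non–resonant Theorem~\ref{theorem_asymptotic_non_resonances} applies, and then to observe that the \emph{only} resulting contribution which is not already $\mathcal O(\omega^{-d})$ is the one carrying the scalar coefficient $A[\bm{\tilde n}_j]$, and that these cancel when summed over $j$ by Lemma~\ref{lemma_sum_A}. Concretely, I would first use Fubini's theorem and the structure (\ref{function_f_operator}) of $F_{\bm n_j}$, exactly as in step~(1) of the proof of Theorem~\ref{theorem_asymptotic_non_resonances}, to write
\[
I[F_{\bm n_j},\sigma_d(t)]=\int_0^t\ee^{(t-\tau_d)\mathcal L}\,\alpha_{n_{j-1}}\,I[F_{\bm{\tilde n}_j},\sigma_{d-1}(\tau_d)]\,\ee^{\ii\omega\tau_d n_{j-1}}\,\D\tau_d ,
\]
where $\bm{\tilde n}_j$ denotes the first $d-1$ coordinates of $\bm n_j$ and $n_{j-1}$ its last coordinate (cyclic indices). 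By (\ref{sum_resonances}) every cyclic partial sum of at most $d-1$ of the $n_i$ is nonzero, so $\bm{\tilde n}_j$ satisfies the non–resonance condition (\ref{sum_nonorthogonal_condition}) in dimension $d-1$; applying Theorem~\ref{theorem_asymptotic_non_resonances} with $r=d-1$ (so that only the $|\bm k|=0$ terms survive) gives
\[
I[F_{\bm{\tilde n}_j},\sigma_{d-1}(\tau_d)]=\frac{1}{(\ii\omega)^{d-1}}\sum_{\ell=0}^{d-1}\ee^{\ii\omega\tau_d\,\bm{\tilde n}_j^{T}\bm v_\ell^{d-1}}\sum_{\bm\phi\in\Phi^{d-1}_\ell}A_{\bm 0}[\bm\phi](\bm{\tilde n}_j)\,F^{\bm 0}_{\bm{\tilde n}_j}[\bm\phi](\tau_d)+E^{(d-1)}_{d-1}(\tau_d),
\]
with $\|E^{(d-1)}_{d-1}(\tau_d)u_0\|_{L^2}=\mathcal O(\omega^{-d})$ under the smoothness hypotheses of Theorem~\ref{theorem_asymptotic_non_resonances} and Lemma~\ref{lemma_estimations} (it suffices, e.g., that $u_0,\alpha_n\in H^{2pd}(\Omega)$).

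Next I would substitute this into the outer $\tau_d$–integral. The total oscillation of the $\ell$–th summand becomes $\ee^{\ii\omega\tau_d(\bm{\tilde n}_j^{T}\bm v_\ell^{d-1}+n_{j-1})}=\ee^{\ii\omega\tau_d\,\bm n_j^{T}\bm v_\ell^{d}}$. For $\ell\ge 1$ the exponent $\bm n_j^{T}\bm v_\ell^{d}=n_{j+\ell}+\dots+n_{j-1}$ is a cyclic partial sum of between $1$ and $d-1$ of the $n_i$, hence nonzero by (\ref{sum_resonances}); integrating by parts once in $\tau_d$ therefore turns each such $\mathcal O(\omega^{-(d-1)})$ contribution into $\mathcal O(\omega^{-d})$, the boundary and remainder terms being controlled by the Sobolev product estimate (\ref{Sobolev_estimation}), Lemma~\ref{lemma_bounded_kp}, and the boundedness of multiplication by $\alpha_{n_{j-1}}\in L^\infty(\Omega)$, exactly as in the proof of Lemma~\ref{lemma_estimations}. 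The piece coming from $E^{(d-1)}_{d-1}(\tau_d)$ is $\mathcal O(\omega^{-d})$ for the same reasons. The only remaining term is the $\ell=0$ one: there $\bm v_0^{d}=(1,\dots,1)$ and $\bm n_j^{T}\bm v_0^{d}=\sum_i n_i=0$, so it is non–oscillatory, and since $A_{\bm 0}[(1,\dots,1)](\bm{\tilde n}_j)=A[\bm{\tilde n}_j]$ (as remarked just before the statement of the theorem) it equals
\[
\frac{A[\bm{\tilde n}_j]}{(\ii\omega)^{d-1}}\int_0^t\ee^{(t-\tau_d)\mathcal L}\,\alpha_{n_{j-1}}\,F^{\bm 0}_{\bm{\tilde n}_j}[(1,\dots,1)](\tau_d)\,\D\tau_d .
\]

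Now the key point: the $\alpha_n$ are multiplication operators, hence commute, and $F^{\bm 0}_{\bm{\tilde n}_j}[(1,\dots,1)](\tau_d)$ is (up to order) the product of the $\alpha$'s indexed by the coordinates of $\bm{\tilde n}_j$ followed by $\ee^{\tau_d\mathcal L}$; prepending $\alpha_{n_{j-1}}$ therefore gives $\bigl(\prod_{k=1}^{d}\alpha_{n_k}\bigr)\ee^{\tau_d\mathcal L}$, which does not depend on $j$. Thus the $\ell=0$ contribution to $I[F_{\bm n_j},\sigma_d(t)]$ equals $\dfrac{A[\bm{\tilde n}_j]}{(\ii\omega)^{d-1}}\,G_0(t)$ with $G_0(t):=\int_0^t\ee^{(t-\tau_d)\mathcal L}\bigl(\prod_k\alpha_{n_k}\bigr)\ee^{\tau_d\mathcal L}\D\tau_d$ the same for all $j$. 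Summing over $j=1,\dots,d$ and invoking Lemma~\ref{lemma_sum_A} ($\sum_j A[\bm{\tilde n}_j]=0$) makes the $\mathcal O(\omega^{-(d-1)})$ part vanish identically, while all remaining terms are $\mathcal O(\omega^{-d})$; hence $\sum_{j=1}^d I[F_{\bm n_j},\sigma_d(t)]u_0=\mathcal O(\omega^{-d})$ in $L^2(\Omega)$.

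The main obstacle is the bookkeeping in the third paragraph: one must verify carefully that, after peeling off $\tau_d$, the unique non–decaying term in the asymptotic expansion of each $I[F_{\bm n_j},\sigma_d(t)]$ is precisely the $\ell=0$, $|\bm k|=0$ term, that its scalar coefficient is exactly $A[\bm{\tilde n}_j]$, and that its operator part is $j$–independent — after which the cancellation is immediate from Lemma~\ref{lemma_sum_A}. The commutativity of the multiplication operators $\alpha_n$ is exactly what forces the operator parts to coincide for all $j$; without it the sum would be $\sum_j A[\bm{\tilde n}_j]$ multiplied by genuinely distinct operator products and no cancellation would occur. (Also note $d\ge 2$ throughout, since (\ref{sum_resonances}) is vacuous otherwise.)
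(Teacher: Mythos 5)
Your proposal is correct and follows essentially the same route as the paper's proof: peel off $\tau_d$ by Fubini, expand the inner $(d-1)$-simplex integral via Theorem~\ref{theorem_asymptotic_non_resonances} (legitimate because every proper cyclic partial sum is nonzero), integrate by parts the $\ell\ge 1$ terms whose exponents $\bm n_j^T\bm v_\ell^d\neq 0$, and cancel the sole non-decaying $\ell=0$, $|\bm k|=0$ contribution across $j$ using Lemma~\ref{lemma_sum_A} together with the commutativity of the multiplication operators $\alpha_n$. The only cosmetic difference is that you truncate the inner expansion at $r=d-1$ so that all higher-order terms are absorbed into the remainder, whereas the paper keeps a general $r$ and notes that the $|\bm{\tilde k}|\ge 1$ terms of its $P_2$ are already $\mathcal O(\omega^{-d})$; the substance is identical.
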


\begin{proof}
	Let $F_{\bm{n}_j}(t,\bm{\tau}) = \ee^{(t-\tau_d)\mathcal{L}}\alpha_{n_{d,j}}\ee^{(\tau_d-\tau_{d-1})\mathcal{L}}\alpha_{n_{d-1,j}}\dots \ee^{(\tau_{2}-\tau_1)\mathcal{L}}\alpha_{n_{1,j}}\ee^{\tau_1\mathcal{L}}$ be the operator with corresponding  vector $\bm{n}_j=(n_{	1,j},n_{2,j},\dots, n_{d,j})$ , and let $\mathcal{S}^{(d-1)}_{r,\bm{\tilde{n}}_j},  E_{r,\bm{\tilde{n}}_j}^{(d-1)}$ be the partial sum of the asymptotic series and the error of approximation of integral $I[F_{\bm{\tilde{n}}_j},\sigma_{d-1}(t)]$. We use Fubini's theorem and then we apply Theorem \ref{theorem_asymptotic_non_resonances} to expand asymptotically $I[F_{\tilde{\bm{n}}_j},\sigma_{d-1}(t)].$ It is possible since the nonresonance condition is violated only for the vectors $\bm{n}_j \in \mathbb{N}^d, \ j=1,\dots, d$. 
	\begin{eqnarray*}
		&&\sum_{j=1}^{d}\int_{\sigma_{d}(t)}F_{\bm{n}_j}(t,\bm{\tau})\ee^{\ii\omega \bm{n}_j^T\bm{\tau} }\D \bm{\tau} =\\
		&&\sum_{j=1}^{d}\int_{0}^{t}\ee^{(t-\tau_d)\mathcal{L}}\alpha_{n_{d,j
		}}I[F_{\bm{\tilde{n}}_j},\sigma_{d-1}(\tau_{d})]\ee^{\ii\omega\tau_{d}n_{d,j}}\D\tau_{d} =\\
		&&\sum_{j=1}^{d} \int_{0}^{t}\ee^{(t-\tau_d)\mathcal{L}}\alpha_{n_{d,j}}\left(\mathcal{S}_{r,\bm{\tilde{n}}_j}^{(d-1)}(\tau_{d})+E_{r,\bm{\tilde{n}}_j}^{(d-1)}(\tau_{d})\right)\ee^{\ii\omega\tau_{d}n_{d,j}}\D\tau_{d}\stackrel{}{=}\\
		&&\underbrace{\sum_{j=1}^{d}\sum_{|\bm{\tilde{k}}|=0}^{r-d+1}\frac{(-1)^{|\bm{\tilde{k}}|}}{(\ii\omega)^{d-1+|\bm{\tilde{k}}|}}\sum_{\ell=1}^{d-1}\sum_{\bm{\phi}\in \Phi^{d-1}_{\ell}}^{}A_{\bm{\tilde{k}}}[\bm{\phi}](\bm{\tilde{n}}_j)\int_{0}^{t}\ee^{(t-\tau_d)\mathcal{L}}\alpha_{n_{d,j}}F_{\bm{\tilde{n}}_j}^{\bm{\tilde{k}}}[\bm{\phi}](\tau_{d})\ee^{\ii\omega \tau_{d} \bm{n}_j^T\bm{v}_{\ell}^{d}}\D\tau_{d}}_{=:P_1(t)}+\\
		&&\underbrace{\sum_{j=1}^{d}\sum_{|\bm{\tilde{k}}|=0}^{r-d+1}\frac{(-1)^{|\bm{\tilde{k}}|}}{(\ii\omega)^{d-1+|\bm{\tilde{k}}|}}A_{\bm{\tilde{k}}}[(1,\dots,1)](\bm{\tilde{n}}_j)\int_{0}^{t}\ee^{(t-\tau_d)\mathcal{L}}\alpha_{n_{d,j}}\bm{F}_{\bm{\tilde{n}}_j}^{\bm{\tilde{k}}}[(1,\dots,1)](\tau_{d})\ee^{\ii\omega \tau_{d} \bm{n}_j^T\bm{v}_{0}^{d}}\D\tau_{d}}_{=:P_2(t)}+\\
		&& \sum_{j=1}^{d}\int_{0}^{t}\ee^{(t-\tau_d)\mathcal{L}}\alpha_{n_{d,j}}E_{r,j}^{(d-1)}(\tau_{d})\ee^{\ii\omega\tau_d n_{d,j}}\D\tau_{d}.\\
	\end{eqnarray*}
	Now since vectors $\bm{n}_j, \ j=1,\dots, d$ satisfy (\ref{sum_resonances}), we have $\bm{n}_j^T\bm{v}_0^d=0$, so $\ee^{\ii\omega\tau_d\bm{n}_j^T\bm{v}_0^d}=1$ and therefore we cannot expand asymptotically expression $P_2(t)$. However, if  $|\bm{\tilde{k}}|=0$ then for each $j$ holds  $\ee^{(t-\tau_d)\mathcal{L}}\alpha_{n_{d,j}}F_{\bm{\tilde{n}_j}}^{\bm{\tilde{k}}}[(1,\dots,1)](\tau_{d}) = \ee^{(t-\tau_d)\mathcal{L}}\alpha_{1}\alpha_{2}\dots\alpha_{d}\ee^{\tau_d\mathcal{L}}$ since functions $\alpha_j$ commute with each other. As a consequence, in expression $P_2$ , by Lemma \ref{lemma_sum_A}, terms with $|\bm{\tilde{k}}|=0$ vanish, so $P_2$ is  equal to
	$$P_2(t)=\sum_{j=0}^{d}\sum_{|\bm{\tilde{k}}|=1}^{n-d+1}\frac{(-1)^{|\bm{\tilde{k}}|}}{(\ii\omega)^{d-1+|\bm{\tilde{k}}|}}A_{\bm{\tilde{k}}}[(1,\dots,1)](\bm{\tilde{n}}_j)\int_{0}^{t}\ee^{(t-\tau_d)\mathcal{L}}\alpha_{n_{d,j}}F_{\bm{\tilde{n}}_j}^{\bm{\tilde{k}}}[(1,\dots,1)](\tau_{d})\D\tau_{d}$$
	and thus $P_2(t)\sim \mathcal{O}(\omega^{-d})$.
	Expression $P_1(t)$ we integrate by parts according to Theorem \ref{theorem_asymptotic_non_resonances}
	since each integrals of expressions $P_1(t)$ has no resonance points, therefore $P_1(t)\sim  \mathcal{O}(\omega^{-d})$ 
	and consequently  $P_1+P_2\sim \mathcal{O}(\omega^{-d}).$
\end{proof}

In the asymptotic series of $\sum_{j=0}^{d}I[F_{n_j},\sigma_d(t)]$, in expressions which were denoted by $P_2$  in the proof of Theorem \ref{theorem_resonances}, appear terms with integral
$$\int_{0}^{t}\ee^{(t-\tau_d)\mathcal{L}}\alpha_{n_{d,j}}F_{\bm{\tilde{n}}_j}^{\bm{\tilde{k}}}[(1,\dots,1)](\tau_{d})\D\tau_{d}$$
yet they are not highly oscillatory, so we expect we can approximate them effortlessly and effectively, for example by using Gauss-Legendre quadrature.

\begin{exam}
	Consider set $\bm{N}^2=\{\bm{n}_1=(-1,1),\bm{n}_2=(1,-1)\}$ and two integrals 
	\begin{eqnarray*}
		I[F_{\bm{n}_1},\sigma_2(t)] \quad \text{and} \quad 	I[F_{\bm{n}_2},\sigma_2(t)].
	\end{eqnarray*}
	Then $	I[F_{\bm{n}_1},\sigma_2(t)] +	I[F_{\bm{n}_2},\sigma_2(t)]\sim \mathcal{O}(\omega^{-2}) $  and the first term of the asymptotic expansion of  $	I[F_{\bm{n}_1},\sigma_2(t)] +	I[F_{\bm{n}_2},\sigma_2(t)] $ is equal to
		\begin{eqnarray*}
			&&\frac{1}{(\ii\omega)^2}\Bigg(\alpha_{1}\ee^{t\mathcal{L}}\alpha_{-1}\ee^{\ii\omega t} +\alpha_{-1}\ee^{t\mathcal{L}}\alpha_{1}\ee^{-\ii\omega t}-2\ee^{t\mathcal{L}}\alpha_1\alpha_{-1}+\int_{0}^{t}\ee^{(t-\tau_2)\mathcal{L}}\alpha_{-1}ad^1_\mathcal{L}(\alpha_{1})\ee^{\tau_2\mathcal{L}}\D\tau_2\\
			&&\hspace{1cm}+\int_{0}^{t}\ee^{(t-\tau_2)\mathcal{L}}\alpha_{1}ad_\mathcal{L}^1(\alpha_{-1})\ee^{\tau_2\mathcal{L}}\D\tau_2\Bigg).\\
	\end{eqnarray*}
	\normalsize
	Occurring integrals are not highly oscillatory and can be computed,   for example, by  Gauss-Legendre quadrature with high accuracy. 
\end{exam}
To summarise this chapter, it is much more difficult to provide formulas for coefficients of the asymptotic expansion for integrals with resonance points. However, Theorem \ref{theorem_resonances} describes the asymptotic behaviour of terms from the Neumann series, and it seems possible to use this fact to construct quadrature rules based on the Filon method.
\section{A highly oscillatory wave equation}\label{sec7}
The proposed method can also be successfully applied to the approximation  of highly oscillatory equations with a second time derivative.
 Consider the following second-order PDE

\begin{align} \label{main_equation}
&\partial_{tt}u=\mathcal{L} u(x,t)+f(x,t)u(x,t),\qquad t\in [0,t^\star],\ x\in\Omega\subset\mathbb{R}^m,\\ \nonumber
&u(x,0)=u_1(x), \quad \partial_t u(x,0) = u_2(x), \\ \nonumber
& u=0 \ \text{on} \   \partial\Omega \times [0,t^\star],
\end{align}
with function $f$ given in (\ref{ideal_function_f}). We write (\ref{main_equation}) as a first-order system
\arraycolsep=4pt\def\arraystretch{1.2}
\begin{equation}\label{eq:matrix}
\partial_t\left[  \begin{array}{c} \nonumber
u \\
v
\end{array} \right] = \left[  \begin{array}{cc}
0 & \mathcal{I} \\
\mathcal{L} & 0
\end{array} \right]	\left[  \begin{array}{c}
u \\
v
\end{array} \right]+ f\left[  \begin{array}{cc}
0 & 0 \\
\mathcal{I} & 0
\end{array} \right]\left[  \begin{array}{c}
u \\
v
\end{array} \right],
\end{equation}
where   $v  = \partial_t u$. Thus
\begin{eqnarray*} 
	\partial_t\underbrace{\left[  \begin{array}{c} 
			u \\ 
			v
		\end{array} \right]}_{\varphi} = \underbrace{\left[  \begin{array}{cc}
			0 & \mathcal{I} \\
			\mathcal{L} & 0
		\end{array} \right]	}_{A}\left[  \begin{array}{c}
		u \\
		v
	\end{array} \right]+\sum_{n=1}^{N}\ee^{\ii n\omega t}\underbrace{ \left[  \begin{array}{cc}
			0 & 0 \\
			\alpha_n & 0
		\end{array} \right] }_{\beta_n}\left[  \begin{array}{c}
		u \\
		v
	\end{array} \right],
\end{eqnarray*}
and therefore
\begin{equation}\label{first_order}
\partial_t\varphi = A\varphi+h\varphi, \quad \varphi(x,0)=[u_1(x),u_2(x)], \quad A[u,v]^T = [v,\mathcal{L}u]^T, \quad \beta_n[u,v]^T=[0,\alpha_n u]^T,
\end{equation}
where $h(x,t)= \sum_{n=1}^{N}\ee^{\ii n\omega t}\beta_n(x)$ is a highly oscillatory function  and $\varphi$ is a vector valued function.
Let $\mathcal{L}$ be a second-order differential operator which has symmetric form
$$\mathcal{L}u = \sum_{i,j=0}^{m}\partial_{x_j}\left(a_{ij}\partial_{x_i}u\right)-cu,$$
where $a_{ij}=a_{ji}, \ i,j=1,\dots,m$ and $c\geq 0$. For simplicity, assume that   $\|\alpha_n\|_\infty < \infty, \ \forall n=1,\dots,N$. 
By applying  Duhamel's formula, we write (\ref{first_order}) as

\begin{eqnarray}\label{eq:Duhamel2}
\varphi(t) = \ee^{tA}\varphi_0+\int_{0}^{t}\ee^{(t-\tau)A}h(\tau)\varphi(\tau)\D\tau.
\end{eqnarray}
Operator  $A: D(A):= \left[ H^{2}(\Omega)\cap H_0^1(\Omega) \right]\times H_0^1(\Omega)\rightarrow H_0^1(\Omega)\times L^2(\Omega)$  is the infinitesimal generator of a $C_0$-semigroup $\{\ee^{tA}\}$ on $H_0^1(\Omega)\times L^2(\Omega)$ \cite{EVANS}. Using the same arguments as in the proof of Theorem \ref{theorem_Neumann_Series}, one can show that the Neumann series converges absolutely and uniformly  in the norm of space $H_0^1(\Omega)\times L^2(\Omega)$ to the solution of equation (\ref{eq:Duhamel2}).

\section{Numerical examples}\label{sec8}
In this section, we present the application of the method to  equations of type (\ref{eq:1.5}) and (\ref{main_equation}). For each of the equations, it is possible to find an analytical solution to compare them  accurately with a numerical approximation.
The $L^2$ norm of the error is considered in any presented example.
For each equation, the  solution is approximated by a partial sum of the asymptotic expansion 
\begin{equation}\label{partial_expansion_example}
u(x,t) \approx p_{0,0}(x,t) +  \sum_{r=1}^{R}\frac{1}{\omega^r}\sum_{s=0}^{S}p_{r,s}(x,t)e^{i s \omega t},
\end{equation}
for different $R$ and $\omega$.
\\

\noindent \emph{Example 1}.\\
We first consider the following equation 
\begin{align}\label{heat_example} \nonumber
&\partial_{t}u=(1-x^2)^4 \partial_{xx}^2 u+f(x,t)u(x,t),\qquad t\in [0,3],\ x\in(-1,1),\\ 
&u(x,0)=u_0(x), \\
&u(-1,t) = 0=u(1,t) ,\nonumber
\end{align}
where initial condition $u_0$ and highly oscillatory potential $f$ take the forms
\begin{equation*}
u_0(x)=\left\{
\begin{array}{@{}ll@{}}
\ee^{-\frac{1}{1-x^2}}& \text{if}\ x\in (-1,1), \\
0& \text{otherwise},
\end{array}\right.
\end{equation*}
\small
\begin{eqnarray*}
	f(x,t) &=& \underbrace{\frac{1}{\omega^2}\left((1-x^2)^4+4\ii \omega x(1-x^2)^2+2\omega^2(1-3x^4)\right)}_{\alpha_0(x)}+\ee^{\ii\omega t}\underbrace{\frac{1}{\omega^2}\left(-2(1-x^2)^4-4\ii x\omega (1-x^2)^2+x\omega^2\right)}_{\alpha_1(x)}\\
	&+&\ee^{2\ii\omega t}\underbrace{\frac{1}{\omega^2}(1-x^2)^4}_{\alpha_2(x)}.
\end{eqnarray*}
\normalsize
\begin{figure}
	\begin{center}
		\includegraphics[width=0.65\textwidth]{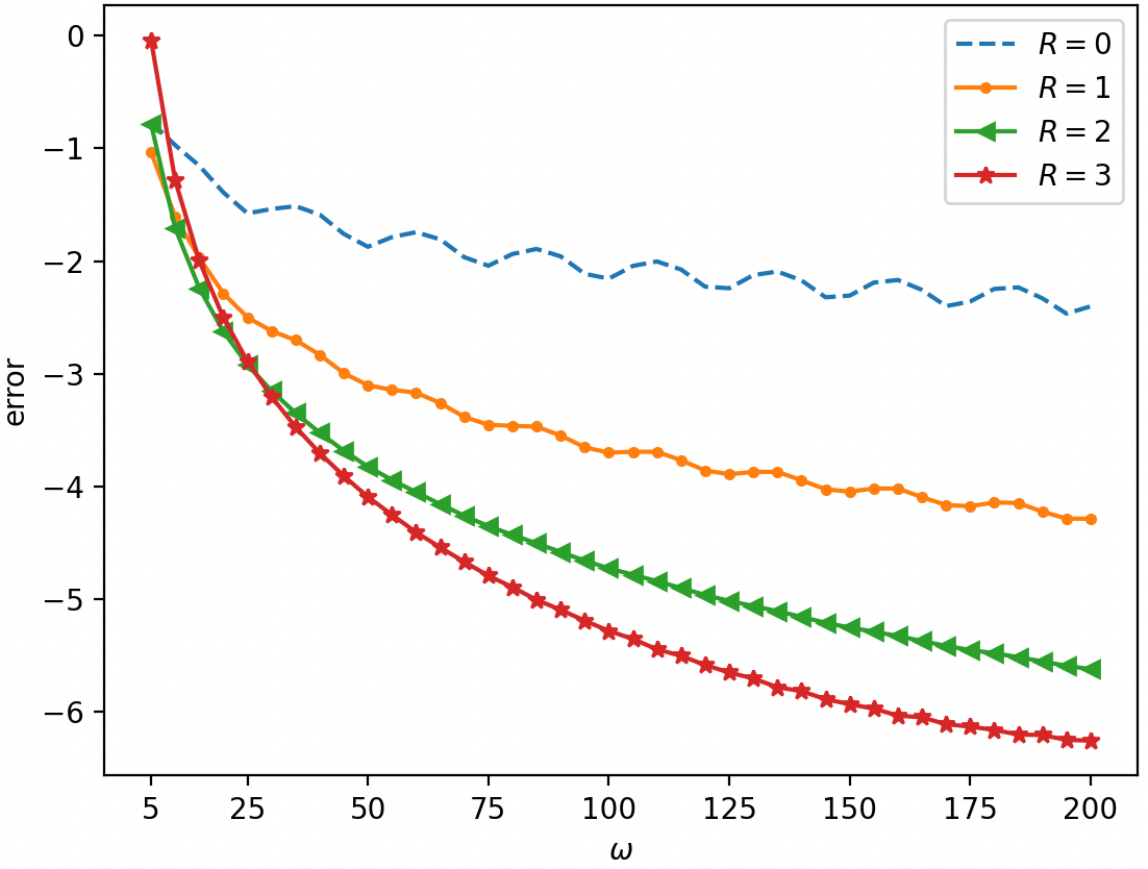}  
		\caption{$L^2$ norm of the error of the method for the equation  (\ref{heat_example}) for $t=3$. We use a base--10 log scale.
		} \label{figure_heat}
	\end{center}
\end{figure}
\noindent The  solution of (\ref{heat_example}) is equal to
$$u(x,t) = \ee^{-\frac{\ii\ee^{\ii\omega t}x}{\omega}+\frac{\ii x}{\omega}}u_0(x)$$
and differential operator $\mathcal{L}$ is of the form 
$$\mathcal{L} = (1-x^2)^4\partial_{xx}^2 + \alpha_0.$$
To approximate the solution we take the first four terms of the Neumann series
$$u^{[3]}(t) = \ee^{t\mathcal{L}}u_0(x)  + T^1\ee^{t\mathcal{L}}u_0+T^2\ee^{t\mathcal{L}}u_0+T^3\ee^{t\mathcal{L}}u_0.$$
Subsequently, we expand  asymptotically  $T^1\ee^{t\mathcal{L}}u_0$, $T^2\ee^{t\mathcal{L}}u_0$ and $T^3\ee^{t\mathcal{L}}u_0$  with error $\mathcal{O}(\omega^{-4}).$
In other words, we approximate the solution of (\ref{heat_example}) by the partial sum of asymptotic expansion (\ref{partial_expansion_example})  with  the first four terms.  
Figure \ref{figure_heat} and Table \ref{table_heat} show the approximation error of the solution $u(x,3)$ for different values of $\omega$ and different  numbers $R$.
\begin{table}[ht]
	\parbox{.50\linewidth}{
		\centering
  \begin{tabular}{|c |c |c|c|}
			\hline
			&$R=0$  & $R=1$ &$R=2$\\ [0.5ex] 
			\hline 
			$\omega=10$ &9.33e-03 & 5.28e-03 &5.55e-05\\ 
			\hline
			$\omega=100$ &5.33e-04 & 4.87e-05& 4.94e-08\\
			\hline
			$\omega=1000$ &4.96e-05 & 7.64e-08&4.86e-11 \\
			\hline
		\end{tabular}\label{table_time_dependent}
		\caption{Error of the method --  equation (\ref{heat_time_dep_example}).}\label{table_heat_time}
	}
	\hfill
	\parbox{.50\linewidth}{
		\centering
		\begin{tabular}{|c |c |c| c|c|}
			\hline
			& $R=0$ & $R=1$ & $R=2$ &$R=3$\\ [0.5ex] 
			\hline 
			$\omega=10$ & 1.12e-01 & 3.17e-02 & 2.75e-02 &7.38e-02\\ 
			\hline
			$\omega=100$ & 1.23e-02 & 3.06e-04 & 2.68e-05 &7.36e-06\\
			\hline
			$\omega=1000$ & 1.27e-03 & 3.15e-06 & 2.68e-08&1.76e-09 \\
			\hline
		\end{tabular} 
		\caption{Error of the method -- equation (\ref{heat_example})} \label{table_heat}
	}	
\end{table}
\\

\noindent \emph{Example 2}.\\
As mentioned, the method can be applied to the potential $f$ with time--dependent functions $\alpha_n(x,t)$. Indeed, consider the following equation
\begin{align}\label{heat_time_dep_example} \nonumber
&\partial_{t}u=\partial^2_{xx} u+\ee^{\ii\omega t}\sin(t)u(x,t),\qquad t\in [0,5],\ x\in (0,2\pi), \\ 
&u(x,0)=\sin(x), \\
&u(0,t) =0=u(2\pi,t) .\nonumber 
\end{align}
The solution of (\ref{heat_time_dep_example}) is equal to
$$u(x,t) = \ee^{-t-1/(\omega^2-1)+\ee^{\ii\omega t}\cos(t)/(\omega^2-1)-\ii\ee^{\ii\omega t}\omega\sin(t)/(\omega^2-1)}\sin(x).$$
Operator $\mathcal{L}= \partial_{xx}^2$ and the potential $f$ is $f(x,t)=\ee^{\ii\omega t}\alpha(x,t)$, where $\alpha(x,t)= \sin(t)$ is time--dependent function. We approximate the  solution by taking the first three terms of the Neumann series 
$$u^{[2]}(t)= \ee^{t\mathcal{L}}u_0+T^1\ee^{t\mathcal{L}}u_0+T^2\ee^{t\mathcal{L}}u_0.$$
To expand asymptotically integrals $T^1\ee^{t\mathcal{L}}u_0$ and $T^2\ee^{t\mathcal{L}}u_0$ we utilize the folowing generalization of Lemma \ref{lemma_ady}. 
\begin{lemma}\label{lemma_ady_time}
	Let $\alpha(\tau) \in C^k([0,t^\star], D(\mathcal{L}^k))$. Then the	$k$-th time derivative of  expression $\ee^{(t-\tau)\mathcal{L}}\alpha(\tau)\ee^{\tau\mathcal{L}}$ is 
	\begin{eqnarray*}\label{equality_ady_time_dependent}
		&&	\partial_\tau^k\left(\ee^{(t-\tau)\mathcal{L}}\alpha(\tau)\ee^{\tau\mathcal{L}}\right)=\sum_{\ell=0}^{k}(-1)^\ell \binom{k}{\ell} \ee^{(t-\tau)\mathcal{L}}ad_\mathcal{L}^\ell\big(\alpha^{(k-\ell)}(\tau)\big)\ee^{\tau\mathcal{L}}.
	\end{eqnarray*}
\end{lemma}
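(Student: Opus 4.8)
The plan is to argue by induction on $k$. The base case $k=0$ is immediate, since both sides of the claimed identity reduce to $\ee^{(t-\tau)\mathcal{L}}\alpha(\tau)\ee^{\tau\mathcal{L}}$, using $ad_\mathcal{L}^0(\alpha^{(0)}(\tau))=\alpha(\tau)$ and $\binom{0}{0}=1$. For the inductive step the single ingredient I would isolate first is a one-step product rule: for any $\beta(\tau)$ that is $C^1$ in $\tau$ with values in $D(\mathcal{L})$,
\[
\partial_\tau\!\left(\ee^{(t-\tau)\mathcal{L}}\beta(\tau)\ee^{\tau\mathcal{L}}\right)
=\ee^{(t-\tau)\mathcal{L}}\bigl(\beta'(\tau)-ad_\mathcal{L}\,\beta(\tau)\bigr)\ee^{\tau\mathcal{L}},
\]
which follows by differentiating the three factors separately, using $\partial_\tau\ee^{(t-\tau)\mathcal{L}}=-\ee^{(t-\tau)\mathcal{L}}\mathcal{L}$ and $\partial_\tau\ee^{\tau\mathcal{L}}=\mathcal{L}\ee^{\tau\mathcal{L}}$ on $D(\mathcal{L})$ (valid since $\mathcal{L}$ generates $\{\ee^{t\mathcal{L}}\}$), and recognizing $\mathcal{L}\beta(\tau)-\beta(\tau)\mathcal{L}=ad_\mathcal{L}\,\beta(\tau)$ from the definition of $ad_\mathcal{L}$.

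Next I would apply this term-by-term to the inductive hypothesis at level $k$, taking $\beta(\tau)=ad_\mathcal{L}^\ell(\alpha^{(k-\ell)}(\tau))$. Since $ad_\mathcal{L}$ commutes with $\partial_\tau$ one gets $\beta'(\tau)=ad_\mathcal{L}^\ell(\alpha^{(k+1-\ell)}(\tau))$, while $ad_\mathcal{L}\,\beta(\tau)=ad_\mathcal{L}^{\ell+1}(\alpha^{(k-\ell)}(\tau))$, so that
\[
\partial_\tau^{k+1}\!\left(\ee^{(t-\tau)\mathcal{L}}\alpha(\tau)\ee^{\tau\mathcal{L}}\right)
=\sum_{\ell=0}^{k}(-1)^\ell\binom{k}{\ell}\ee^{(t-\tau)\mathcal{L}}
\Bigl(ad_\mathcal{L}^\ell(\alpha^{(k+1-\ell)}(\tau))-ad_\mathcal{L}^{\ell+1}(\alpha^{(k-\ell)}(\tau))\Bigr)\ee^{\tau\mathcal{L}}.
\]
Reindexing $\ell\mapsto\ell-1$ in the second group of terms and invoking Pascal's identity $\binom{k}{\ell}+\binom{k}{\ell-1}=\binom{k+1}{\ell}$ collapses the right-hand side to $\sum_{\ell=0}^{k+1}(-1)^\ell\binom{k+1}{\ell}\ee^{(t-\tau)\mathcal{L}}ad_\mathcal{L}^\ell(\alpha^{(k+1-\ell)}(\tau))\ee^{\tau\mathcal{L}}$, which is exactly the statement at level $k+1$. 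As a consistency check, keeping only the $\ell=k$ term when $\alpha$ is $\tau$-independent recovers Lemma \ref{lemma_ady}.

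I expect the main obstacle to be not the algebra but the functional-analytic bookkeeping that makes the formal computation legitimate: one must know that each factor $ad_\mathcal{L}^\ell(\alpha^{(k-\ell)}(\tau))$ maps into $D(\mathcal{L})$ so the product rule above may be applied, and that $\mathcal{L}^j$ applied to $\alpha(\cdot)$ remains differentiable in $\tau$ with derivative $\mathcal{L}^j\alpha'(\cdot)$ (this is the step where ``$ad_\mathcal{L}$ commutes with $\partial_\tau$'' is used). Both follow from the hypothesis $\alpha\in C^k([0,t^\star],D(\mathcal{L}^k))$ combined with the standing assumption that the expressions $\mathcal{L}^j\alpha$ are well defined, via the embeddings $D(\mathcal{L}^k)\hookrightarrow D(\mathcal{L}^{\ell+1})$ for $\ell\le k-1$ and the boundedness of $\mathcal{L}^j\colon D(\mathcal{L}^k)\to D(\mathcal{L}^{k-j})$ (the a priori elliptic estimates already used in Lemma \ref{lemma_bounded_kp}), which lets $\mathcal{L}^j$ pass through the $\tau$-limit. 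It is worth tracking at each level of the induction that the $k$ differentiations consume at most $C^k$ regularity in $\tau$ and $D(\mathcal{L}^k)$ regularity in space, confirming that the stated hypothesis is exactly what the argument requires.
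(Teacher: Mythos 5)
Your proof is correct and follows essentially the same route as the paper: induction on $k$, with the single-derivative product rule $\partial_\tau(\ee^{(t-\tau)\mathcal{L}}\beta\ee^{\tau\mathcal{L}})=\ee^{(t-\tau)\mathcal{L}}(\beta'-ad_\mathcal{L}\beta)\ee^{\tau\mathcal{L}}$ applied term-by-term to the inductive hypothesis, followed by reindexing and Pascal's identity. The paper carries out exactly this computation (starting the induction at $k=1$ rather than $k=0$ and without spelling out the domain bookkeeping you add), so the two arguments coincide.
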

\noindent  The proof can be found in Appendix.
Table \ref{table_heat_time} presents the error of the method up to the second iteration.

\begin{table}[ht]
\parbox{.50\linewidth}{
		\centering
	\begin{tabular}{|c|c|c|c|} 
		\hline
		& $R=0$ &$R=1$  & $R=2$ \\ [0.5ex] 
		\hline 
		$\omega=10$ & 3.17e-02 &3.17e-02& 1.71e-03 \\ 
		\hline
		$\omega=100$ & 5.45e-04 &5.45e-04 & 2.00e-07 \\
		\hline
		$\omega=1000$ & 5.54e-06 & 5.54e-06& 1.98e-11 \\
		\hline
	\end{tabular}
	\caption{Error of the method- equation (\ref{wave_example})}\label{table_wave}
}
	\hfill
 \parbox{.50\linewidth}{
		\centering
	\begin{tabular}{|c |c |c| c|c|}
			\hline
			& $R=0$ & $R=1$ & $R=2$ &$R=3$\\ [0.5ex] 
			\hline 
			$\omega=10$ & 3.58e-00 & 3.47e-01 & 2.22e-02 &1.07e-03\\ 
			\hline
			$\omega=100$ & 1.00e-01 & 2.64e-04 & 4.62e-07 &6.07e-10\\
			\hline
			$\omega=1000$ & 1.80e-02 & 8.41e-06 & 2.62e-09&6.14e-13 \\
			\hline
		\end{tabular} 
	\caption{Error of the method- equation (\ref{biharmonic_example})}\label{table_biharmonic}
}
\end{table}
\vspace{0.4cm}

\noindent \emph{Example 3}.\\
Consider now the wave equation with potential with negative frequencies
\begin{align} \label{wave_example}
&\partial_{tt}u=\partial_{xx} \nonumber u+f(x,t)u(x,t),\qquad t\in [0,1],\ x\in (-L,L), \quad L=10,\\ 
&u(x,0)=\ee^{-x^2(1/2+1/\omega^2)}, \quad \partial_t u(x,0) = 0,\\
&u(-L,t) =u(L,t) ,\nonumber \\
& \partial_{t}u(-L,t) =\partial_{t}u(L,t),   \nonumber
\end{align}
where function $f$ takes the form
$$f(x,t) = \left(1-x^2+\frac{(2+x^2\omega^2-4x^2)\cos(\omega t)}{\omega^2}-\frac{4x^2\cos^2(\omega t)}{\omega^4}+\frac{x^4\sin^2(\omega t)}{\omega^2}\right).$$
The solution of (\ref{wave_example}) is equal to $$u(x,t) = \ee^{-\cos(\omega t)x^2/\omega^2}\ee^{-x^2/2}.$$
Due to the presence of the resonance points, we do not have a general formula for the asymptotic expansion of the integrals that appears in the Neumann series  in this case. Nevertheless, we can employ Theorem \ref{theorem_asymptotic_non_resonances} and Theorem \ref{theorem_resonances} to approximate only terms $T^1\ee^{tA}$ and $T^2\ee^{tA}$, where $A$ is the linear operator of form (\ref{first_order}).
Table \ref{table_wave}  presents the error of the method for a different oscillatory parameter $\omega$ and different number $R$.\\
\\
\noindent \emph{Example 4}.\\
In the last example, we consider the equation with the biharmonic operator 
\begin{eqnarray}\label{biharmonic_example} \nonumber
	&&\partial_tu = \partial_{xxxx}^4 u+\ee^{\ii\omega t} u(x,t), \quad x\in (0, \pi), \quad t\in[0,1],\\
	&& u(x,0) = u_0(x) = \sin(x)\exp\left(-\frac{\ii}{\omega}\right),\\
	&& u(0,t)= u(\pi,t)=0, \nonumber
\end{eqnarray}
with periodic boundary conditions.
The solution is 
\begin{align*}
    u(x,t) = \ee^{t-\ii\ee^{\ii\omega t}/\omega} \sin(x).
\end{align*}
Table \ref{table_biharmonic} presents the error of the method for different parameter $\omega$ and different partial sums of the asymptotic expansion.
\begin{remark}
	One can notice that equations (\ref{heat_example}), (\ref{wave_example}) and (\ref{biharmonic_example}) do not fully satisfy Assumption \ref{assumption1}. Indeed,  linear operator $\mathcal{L}$ form equation (\ref{heat_example}) does not satisfy strong ellipticity condition for $x$ near $-1$ or $1$, and solutions $u(x,t) $ of equations (\ref{wave_example}) and (\ref{biharmonic_example})  does not satisfy the zero boundary conditions.    These are just examples to illustrate the proposed methodology and suggest that the method can  be applied to a wider class of differential equations.   
\end{remark}

\section{Concluding remarks}\label{sec9}
In this paper, we have demonstrated that the solution to a highly oscillatory differential equation (\ref{eq:1.5}) can be approximated by the sum in the form (\ref{partial_expansion}).  We started by showing that the Neumann series converges to the solution of the equation in the Sobolev space $H^{2p}(\Omega)\cap H_0^p(\Omega)$. Then, by using partial integration, we expand asymptotically the integrals appearing in the Neuman series. We show that the asymptotic method can be used for differential equations with a second time derivative. Numerical experiments illustrate the proposed method and confirm the theoretical studies.

There are several plans for extending the proposed approach. 
The primary concern is to establish rigorous estimations for the error formulas that arise in the asymptotic expansion of highly oscillatory integrals. This task is particularly challenging, especially when dealing with a general differential operator $\mathcal{L}$ in the form of (\ref{diff_operator}). It should be noted that the partial sum $\mathcal{S}_r^{(d)}(t)$, used to approximate the integral $I[F_{\bm{n}},\sigma_d(t)]$, may be divergent as $r\rightarrow \infty$. A significant advancement in the approximation of highly oscillatory PDEs would involve determining the maximum value of $r^\star$, beyond which the method's error begins to increase.

To approximate each integral of the form (\ref{I[F,S_d(t)]}), we used the asymptotic method, which represents a simple form of quadrature rules tailored to highly oscillatory integrals. If adding successive terms of the sum (\ref{partial_expansion}) does not improve the expected accuracy, it is possible to modify the proposed method by incorporating more sophisticated formulas, such as Filon-type methods. This approach should further reduce the approximation error.

The next  step would be to provide the asymptotic expansion of the solution for the potential in the form (\ref{ideal_function_f2}). Furthermore, it would be interesting to investigate whether it is possible to represent the solution of the equation as an asymptotic series for an even more general highly oscillatory potential given by
\begin{equation*}
f(x,t) = \sum_{n=1}^{N}\alpha_n(x,t)\ee^{\ii \omega g_n(t)}, \quad \omega \gg 1, \quad N\in \mathbb{N},
\end{equation*}
where $\alpha_n$ and $g_n$ are sufficiently smooth functions. If such an expansion is possible, the method could be applicable to a wide range of highly oscillatory differential equations.

\appendix

\section{}\label{secA1}
\begin{lemma}\label{lemma_ady_time}
	The	$k$-th time derivative of  expression $\ee^{(t-\tau)\mathcal{L}}\alpha(\tau)\ee^{\tau\mathcal{L}}$ is equal to
	\begin{eqnarray}\label{equality_ady_time_dependent}
	&&	\partial_\tau^k\left(\ee^{(t-\tau)\mathcal{L}}\alpha(\tau)\ee^{\tau\mathcal{L}}\right)=\sum_{\ell=0}^{k}(-1)^\ell \binom{k}{\ell} \ee^{(t-\tau)\mathcal{L}}ad_\mathcal{L}^\ell\big(\alpha^{(k-\ell)}(\tau)\big)\ee^{\tau\mathcal{L}}.
	\end{eqnarray}
\end{lemma}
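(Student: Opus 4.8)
The plan is to proceed by induction on $k$, exactly in the spirit of Lemma \ref{lemma_ady}, the only new feature being the $\tau$-dependence of $\alpha$. The key observation is that differentiating a ``sandwiched'' expression produces one clean operation on the middle factor. Indeed, for a sufficiently smooth operator-valued function $g(\tau)$ (taking values in $D(\mathcal{L})$ in the relevant sense), using $\partial_\tau\ee^{(t-\tau)\mathcal{L}}=-\ee^{(t-\tau)\mathcal{L}}\mathcal{L}$, $\partial_\tau\ee^{\tau\mathcal{L}}=\mathcal{L}\ee^{\tau\mathcal{L}}=\ee^{\tau\mathcal{L}}\mathcal{L}$, the Leibniz rule and the closedness of $\mathcal{L}$, one gets
$$\partial_\tau\!\left(\ee^{(t-\tau)\mathcal{L}}g(\tau)\ee^{\tau\mathcal{L}}\right)=\ee^{(t-\tau)\mathcal{L}}\big(g'(\tau)-[\mathcal{L},g(\tau)]\big)\ee^{\tau\mathcal{L}}=\ee^{(t-\tau)\mathcal{L}}\big[(\partial_\tau-ad_\mathcal{L})(g)\big](\tau)\,\ee^{\tau\mathcal{L}}.$$
Writing $\mathcal{D}:=\partial_\tau-ad_\mathcal{L}$ for the operator acting on $\tau$-dependent operator-valued functions, an immediate induction then gives
$$\partial_\tau^k\!\left(\ee^{(t-\tau)\mathcal{L}}\alpha(\tau)\ee^{\tau\mathcal{L}}\right)=\ee^{(t-\tau)\mathcal{L}}\,\big[\mathcal{D}^k(\alpha)\big](\tau)\,\ee^{\tau\mathcal{L}}.$$

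Next I would observe that the two constituents of $\mathcal{D}$ commute as operators on this space of functions: since $\mathcal{L}$ is independent of $\tau$, one has $\partial_\tau\big(ad_\mathcal{L}(g(\tau))\big)=ad_\mathcal{L}\big(g'(\tau)\big)$, i.e. $\partial_\tau\circ ad_\mathcal{L}=ad_\mathcal{L}\circ\partial_\tau$. Hence the binomial theorem applies to $\mathcal{D}^k=(\partial_\tau-ad_\mathcal{L})^k$, yielding
$$\mathcal{D}^k=\sum_{\ell=0}^{k}\binom{k}{\ell}(-1)^\ell\,ad_\mathcal{L}^\ell\circ\partial_\tau^{\,k-\ell}.$$
Applying this to $\alpha(\tau)$ and substituting into the sandwiched formula above gives precisely
$$\partial_\tau^k\!\left(\ee^{(t-\tau)\mathcal{L}}\alpha(\tau)\ee^{\tau\mathcal{L}}\right)=\sum_{\ell=0}^{k}(-1)^\ell\binom{k}{\ell}\,\ee^{(t-\tau)\mathcal{L}}\,ad_\mathcal{L}^\ell\big(\alpha^{(k-\ell)}(\tau)\big)\,\ee^{\tau\mathcal{L}},$$
which is the claim. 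As a consistency check, if $\alpha$ does not depend on $\tau$ then $\alpha^{(k-\ell)}=0$ for $\ell<k$, so only the $\ell=k$ term survives and one recovers Lemma \ref{lemma_ady}.

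The main obstacle is not combinatorial but is a matter of domains and of justifying the formal manipulations: for $\alpha\in C^k([0,t^\star],D(\mathcal{L}^k))$ one must check that every nested commutator $ad_\mathcal{L}^\ell(\alpha^{(k-\ell)}(\tau))$, $0\le\ell\le k$, is well defined, and that the semigroup factors may legitimately be commuted past $\mathcal{L}$ (and past these commutators) as used in the Leibniz step. This is exactly the standing convention adopted right after Lemma \ref{lemma_ady}, under which $\mathcal{L}^j\alpha$ is well defined; granting it, the closedness of $\mathcal{L}$ together with the strong differentiability of $t\mapsto\ee^{t\mathcal{L}}v$ on $D(\mathcal{L})$ validates the differentiation of the triple product, and the induction goes through. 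An alternative, equally short route that avoids introducing $\mathcal{D}$ is a direct induction on $k$ using the Leibniz rule on the three factors together with the identity $\partial_\tau\big(\ee^{(t-\tau)\mathcal{L}}ad_\mathcal{L}^\ell(\beta(\tau))\ee^{\tau\mathcal{L}}\big)=\ee^{(t-\tau)\mathcal{L}}\big(ad_\mathcal{L}^\ell(\beta'(\tau))-ad_\mathcal{L}^{\ell+1}(\beta(\tau))\big)\ee^{\tau\mathcal{L}}$ and Pascal's rule $\binom{k}{\ell}+\binom{k}{\ell-1}=\binom{k+1}{\ell}$ to pass from $k$ to $k+1$; I would present the $\mathcal{D}$-based argument as the cleaner one and relegate this variant to a remark.
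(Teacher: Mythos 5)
Your proof is correct, and your primary argument is organized differently from the paper's. The paper proves the lemma by direct induction on $k$: it verifies $k=1$ by the product rule, then applies $\partial_\tau$ to the $(k-1)$-st formula term by term, shifts indices, and combines coefficients via Pascal's rule $\binom{k}{\ell}=\binom{k-1}{\ell}+\binom{k-1}{\ell-1}$ --- i.e.\ exactly the ``alternative route'' you relegate to a remark at the end. Your main argument instead isolates the single-derivative sandwich identity $\partial_\tau\left(\ee^{(t-\tau)\mathcal{L}}g(\tau)\ee^{\tau\mathcal{L}}\right)=\ee^{(t-\tau)\mathcal{L}}\left(\partial_\tau-ad_\mathcal{L}\right)(g)(\tau)\,\ee^{\tau\mathcal{L}}$, iterates it to get $\mathcal{D}^k$ with $\mathcal{D}=\partial_\tau-ad_\mathcal{L}$, and then invokes the binomial theorem for the commuting operators $\partial_\tau$ and $ad_\mathcal{L}$. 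This buys a cleaner, less computational derivation (the index gymnastics and Pascal's rule are absorbed into a standard algebraic fact) and makes the structural reason for the binomial coefficients transparent; the paper's version is more elementary and self-contained but longer. Both rest on the same one-derivative identity and the same domain conventions ($\alpha\in C^k([0,t^\star],D(\mathcal{L}^k))$ and commuting $\ee^{t\mathcal{L}}$ past $\mathcal{L}$), which you correctly flag as the only point needing care. Your consistency check against the time-independent Lemma~\ref{lemma_ady} is also apt.
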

\begin{proof}
	We prove the statement by induction on $k$.
	Let $k=1$. 
	\begin{eqnarray*}
		\partial_\tau^1\left(\ee^{(t-\tau)\mathcal{L}}\alpha(\tau)\ee^{\tau\mathcal{L}}\right)&=& \partial_\tau^1\left(\ee^{(t-\tau)\mathcal{L}}\alpha(\tau)\right)\ee^{\tau\mathcal{L}}+\ee^{(t-\tau)\mathcal{L}}\alpha(\tau)\mathcal{L}\ee^{\tau\mathcal{L}}\\
		&=&(-1)\mathcal{L}\ee^{(t-\tau)\mathcal{L}}\alpha(\tau)\ee^{\tau\mathcal{L}}+\ee^{(t-\tau)\mathcal{L}}\alpha'(\tau)\ee^{\tau\mathcal{L}}+\ee^{(t-\tau)\mathcal{L}}\alpha(\tau)\mathcal{L}\ee^{\tau\mathcal{L}}\\
		&=& (-1)\ee^{(t-\tau)\mathcal{L}}ad^1_\mathcal{L}(\alpha(\tau))\ee^{\tau\mathcal{L}}+\ee^{(t-\tau)\mathcal{L}}\alpha'(\tau)\ee^{\tau\mathcal{L}}\\
		&=&\sum_{\ell=0}^{1}(-1)^\ell \binom{1}{\ell} \ee^{(t-\tau)\mathcal{L}}ad_\mathcal{L}^\ell\big(\alpha^{(1-\ell)}(\tau)\big)\ee^{\tau\mathcal{L}}.
	\end{eqnarray*}
	Suppose now the formula (\ref{equality_ady_time_dependent}) is valid for $k-1, \ k> 1$. Then, by induction, we have
	\footnotesize
	\begin{eqnarray*}
		\partial_\tau^k\left(\ee^{(t-\tau)\mathcal{L}}\alpha(\tau)\ee^{\tau\mathcal{L}}\right) &=&\partial_\tau^1\left(\partial_\tau^{k-1}\left(\ee^{(t-\tau)\mathcal{L}}\alpha(\tau)\ee^{\tau\mathcal{L}}\right)\right)\\
		&=&\sum_{\ell=0}^{k-1}(-1)^\ell \binom{k-1}{\ell} \partial_\tau^1\left(\ee^{(t-\tau)\mathcal{L}}ad_\mathcal{L}^\ell\big(\alpha^{(k-1-\ell)}(\tau)\big)\ee^{\tau\mathcal{L}}\right)\\
		&=&\sum_{\ell=0}^{k-1}(-1)^{\ell} \binom{k-1}{\ell}\left((-1)\ee^{(t-\tau)\mathcal{L}}ad^{\ell+1}_\mathcal{L}(\alpha^{(k-1-\ell)}(\tau))\ee^{\tau\mathcal{L}}+\ee^{(t-\tau)\mathcal{L}}ad_\mathcal{L}^\ell\big(\alpha^{(k-\ell)}(\tau)\big)\ee^{\tau\mathcal{L}}\right)\\
		&=&\sum_{\ell=0}^{k-2}(-1)^{\ell+1} \binom{k-1}{\ell}\ee^{(t-\tau)\mathcal{L}}ad^{\ell+1}_\mathcal{L}(\alpha^{(k-(\ell+1))}(\tau))\ee^{\tau\mathcal{L}}+(-1)^k\ee^{(t-\tau)\mathcal{L}}ad_\mathcal{L}^k(\alpha(\tau))\ee^{\tau\mathcal{L}}\\
		&+&\sum_{\ell=1}^{k-1}(-1)^{\ell} \binom{k-1}{\ell}\ee^{(t-\tau)\mathcal{L}}ad_\mathcal{L}^\ell\big(\alpha^{(k-\ell)}(\tau)\big)\ee^{\tau\mathcal{L}}+\ee^{(t-\tau)\mathcal{L}}\alpha^{(k)}(\tau)\ee^{\tau\mathcal{L}}\\
		&=&\sum_{\ell=1}^{k-1}(-1)^{\ell} \binom{k-1}{\ell-1}\ee^{(t-\tau)\mathcal{L}}ad^{\ell}_\mathcal{L}(\alpha^{(k-\ell)}(\tau))\ee^{\tau\mathcal{L}}+(-1)^k\ee^{(t-\tau)\mathcal{L}}ad_\mathcal{L}^k(\alpha(\tau))\ee^{\tau\mathcal{L}}\\
		&+&\sum_{\ell=1}^{k-1}(-1)^{\ell} \binom{k-1}{\ell}\ee^{(t-\tau)\mathcal{L}}ad_\mathcal{L}^\ell\big(\alpha^{(k-\ell)}(\tau)\big)\ee^{\tau\mathcal{L}}+\ee^{(t-\tau)\mathcal{L}}\alpha^{(k)}(\tau)\ee^{\tau\mathcal{L}}\\
		&=&\sum_{\ell=1}^{k-1}(-1)^{\ell} \binom{k}{\ell}\ee^{(t-\tau)\mathcal{L}}ad^{\ell}_\mathcal{L}(\alpha^{(k-\ell)}(\tau))\ee^{\tau\mathcal{L}}\\
		&+&\ee^{(t-\tau)\mathcal{L}}\alpha^{(k)}(\tau)\ee^{\tau\mathcal{L}}+(-1)^k\ee^{(t-\tau)\mathcal{L}}ad_\mathcal{L}^k(\alpha(\tau))\ee^{\tau\mathcal{L}}\\
		&=&\sum_{\ell=0}^{k}(-1)^{\ell} \binom{k}{\ell}\ee^{(t-\tau)\mathcal{L}}ad^{\ell}_\mathcal{L}(\alpha^{(k-\ell)}(\tau))\ee^{\tau\mathcal{L}}.
	\end{eqnarray*}
\normalsize
	In the penultimate equality we used the identity $\binom{k}{\ell} = \binom{k-1}{\ell}+\binom{k-1}{\ell-1}$.
\end{proof}


\begin{thebibliography}{HD82}




\normalsize
\baselineskip=17pt


\bibitem[1]{agmon} S. Agmon, \emph{Lectures on elliptic boundary value problems.} Van Nostrand, Princeton (1965)


\bibitem[2]{cohen_2004}  D. Cohen,
\emph{Analysis and numerical treatment of highly oscillatory differential equations}, PhD thesis,  2004.

\bibitem[3]{Lubich_2003}  D. Cohen, E. Hairer and C. Lubich,
\emph{Modulated Fourier expansions of highly
	oscillatory differential equations},  
 Found. Comput. Math. 3 (2003), 327--345.   


\bibitem[4]{Condon_2009} M. Condon, A. Dea\~{n}o and A. Iserles,
\emph{On highly oscillatory problems arising in
electronic engineering},  
M2AN Math. Model. Numer. Anal. 43 (2009), 785--804. 


\bibitem[5]{Condon_2010} M. Condon, A. Dea\~{n}o and A. Iserles,
\emph{On second-order differential equations
with highly oscillatory forcing terms},  
Proc. R. Soc. Lond. Ser. A Math. Phys. Eng. Sci.
466 (2010), 1809--1828. 


\bibitem[6]{condon_2012} M. Condon, A. Dea\~{n}o and A. Iserles,
\emph{Effcient computation
of delay differential equations with highly oscillatory terms},  
ESAIM Math. Model. Numer. Anal.
46 (2012), 1407--1420.


\bibitem[7]{Condon_2019} M. Condon, A. Iserles, K. Kropielnicka and P. Singh,
\emph{Solving the wave
equation with multifrequency oscillations},  
J. Comput. Dyn. 6 (2019), 239--249.


\bibitem[8]{CKLP_2021} M. Condon, K. Kropielnicka, K. Lademann and R. Perczy\'{n}ski,
\emph{Asymptotic
numerical solver for the linear Klein--Gordon equation with space- and time-dependent mass},  
Appl. Math. Lett. 115 (2021) Paper No. 106935, 7.


\bibitem[9]{DHI_2017} A. Dea\~{n}o, D. Huybrechs and A. Iserles,
\emph{Computing highly oscillatory integrals},  
SIAM, 2017.


\bibitem[10]{EVANS} L.C. Evans,
\emph{Partial Differential Equations}.  Graduate studies in mathematics. American Mathematical Society, 1998.


\bibitem[11]{Lubich_2001} E. Hairer and C. Lubich,
\emph{Long-time energy conservation of numerical methods for
oscillatory differential equations},  
SIAM J. Numer. Anal. 38 (2000), 414--441.


\bibitem[12]{Hairer} E. Hairer, C. Lubich and G. Wanner,
\emph{Geometric numerical integration}, volume
31 of Springer Series in Computational Mathematics,  
Springer-Verlag, Berlin, second edition,
2006. \emph{Structure-preserving algorithms for ordinary differential equations.} 


\bibitem[13]{IKS_2018} A. Iserles, K. Kropielnicka and P. Singh,
\emph{Solving Schr\"{o}dinger equation in semiclassical
regime with highly oscillatory time-dependent potentials},  
J. Comput. Phys. 376 (2019),  564--584.


\bibitem[14]{IN_2006} A. Iserles and S. Norsett,
\emph{Quadrature methods for multivariate highly oscillatory integrals
using derivatives},  
Math. Comput. 75 (2006), 1233--1258.


\bibitem[15]{KKRP} K. Kropielnicka  and R. Perczy\'{n}ski,
\emph{Asymptotic expansions for the linear PDEs with oscillatory input terms; Analytical form and error analysis},  
Comput. Math. Appl. 156 (2024), 16--27


\bibitem[16]{Splitting_acta} R. I. McLachlan and G. R. W. Quispel,
\emph{Splitting methods},  
Acta Numer. 11 (2002), 341--434.


\bibitem[17]{PAZY} A. Pazy,
\emph{Semigroups of Linear Operators and Applications to Partial Differential Equations},  
Applied mathematical sciences. Springer, 1983.

\bibitem[18]{Renardy}  M. Renardy and R. C. Rogers, \emph{An Introduction to Partial Differential Equations}, Springer, 2004.


\bibitem[19]{Sanz-Serna_2009} J. M. Sanz-Serna,
\emph{Modulated Fourier expansions and heterogeneous multiscale methods},  
J. Numer. Anal. 29 (2009), 595--605.


\bibitem[20]{trefethen} L. N. Trefethen,
\emph{Spectral methods in {MATLAB}},  
Society for Industrial and Applied Mathematics (SIAM),
Philadelphia, PA, 2000.







\end{thebibliography}
\end{document}